\newtheorem{theorem}{Theorem}
\numberwithin{theorem}{section}
\newtheorem{corollary}[theorem]{Corollary}
\newtheorem{lemma}[theorem]{Lemma}
\newtheorem{proposition}[theorem]{Proposition}
\theoremstyle{definition}
\newtheorem{definition}[theorem]{Definition}
\newtheorem{remark}[theorem]{Remark}
\newtheorem{example}[theorem]{Example}
\newtheorem{exercise}[theorem]{Exercise}
\newtheorem{assumption}[theorem]{Standing Assumption}
\newcommand{\rk}{\operatorname{rk}}
\newcommand{\tl}{\operatorname{\lhd}}
\newcommand{\prog}{\operatorname{\mathsf{Prog}}}
\newcommand{\ti}{\operatorname{\mathsf{TI}}}
\newcommand{\pa}{\mathsf{PA}}
\newcommand{\aca}{\mathsf{ACA}}
\newcommand{\bad}{\operatorname{Bad}}
\title[A First Course on Ordinal Analysis]{Unprovability in Mathematics:\\ A First Course on Ordinal Analysis}
\author{Anton Freund}
\address{Anton Freund, Department of Mathematics, Technical University of Darmstadt, Schloss\-garten\-str.~7, 64289~Darmstadt, Germany}
\email{freund@mathematik.tu-darmstadt.de}
\begin{document}

\begin{abstract}
These are the lecture notes of an introductory course on ordinal analysis. Our selection of topics is guided by the aim to give a complete and direct proof of a mathematical independence result: Kruskal's theorem for binary trees is unprovable in conservative extensions of Peano arithmetic (note that much stronger results of this type are due to Harvey Friedman). Concerning prerequisites, we assume a solid introduction to mathematical logic but no specialized knowledge of proof theory. The material in these notes is intended for 12~lectures and 6~exercise sessions of 90~minutes each.
\end{abstract}

\keywords{Lecture notes, ordinal analysis, concrete incompleteness, Kruskal's tree theorem}
\subjclass[2020]{03-01, 03B30, 03F05, 03F15, 03F35, 03F40}

\maketitle

\section{Introduction}

The incompleteness theorems of Kurt~G\"odel tell us: for any ``reasonable" axiom system for the foundations of mathematics, there are statements that can neither be proved nor refuted (see e.\,g.~\cite{smorynski-incompleteness}). It seems natural to ask for examples of such statements. In particular, one may be interested in known mathematical theorems that are unprovable in some relevant axiom system (while the known proof can be formalized in a stronger system). The present course will culminate in an example of this type. Our example concerns a certain collection $\mathcal B$ of finite binary trees. For $s,t\in\mathcal B$ we write $s\leq_{\mathcal B} t$ if there is an infimum-preserving embedding of $s$ into~$t$  (see Section~\ref{sect:kruskal} for details). The following does, in fact, hold for arbitrary finite trees (rather than just for binary ones). For a proof we refer to Exercise~\ref{ex:Kruskal} below.

\begin{theorem}[{Kruskal~\cite{kruskal60}}]\label{thm:kruskal-binary}
For any infinite sequence $t_0,t_1,\ldots\subseteq\mathcal B$ there are indices $i<j$ with $t_i\leq_{\mathcal B} t_j$.
\end{theorem}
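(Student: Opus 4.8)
The plan is to prove the stronger assertion for all finite trees, via the \emph{minimal bad sequence} method of Nash--Williams. Call a sequence $t_0,t_1,\ldots$ \emph{bad} if there are no indices $i<j$ with $t_i\leq_{\mathcal B}t_j$; the theorem is precisely the statement that no bad sequence over $\mathcal B$ exists. I would argue by contradiction: assuming some bad sequence exists, I first distill a particularly economical one. Concretely, build a bad sequence $(t_n)_{n\in\mathbb N}$ by recursion on $n$, choosing $t_n$ to be a tree of least possible size (number of nodes) subject to the constraint that $t_0,\ldots,t_n$ can still be continued to a bad sequence. This uses that sizes live in the well-ordered set $\mathbb N$ (together with dependent choice) and produces a bad sequence that is minimal in each coordinate.

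The heart of the proof is the claim that the set $B$ of all immediate subtrees hanging from the roots of the $t_n$ is again well-quasi-ordered by $\leq_{\mathcal B}$, i.e.\ admits no bad sequence. To establish this, suppose $(s_k)_{k}$ were a bad sequence in $B$, where each $s_k$ is an immediate subtree of some $t_{g(k)}$, and pick an index $k_0$ for which $N:=g(k_0)$ is least among all values of $g$. I would then consider the spliced sequence $t_0,\ldots,t_{N-1},s_{k_0},s_{k_0+1},\ldots$ and verify that it is bad. No good pair can occur among the $t_i$, by badness of $(t_n)$; none among the $s_k$, since a tail of a bad sequence is bad; and a cross pair $t_i\leq_{\mathcal B}s_k$ with $i<N$ is impossible, because $s_k\leq_{\mathcal B}t_{g(k)}$ (an immediate subtree embeds into its tree in an infimum-preserving way) would yield $t_i\leq_{\mathcal B}t_{g(k)}$ with $i<N\leq g(k)$, contradicting badness of $(t_n)$. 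But $s_{k_0}$ is a proper subtree of $t_N$, hence of strictly smaller size, so $t_0,\ldots,t_{N-1},s_{k_0}$ extends to a bad sequence with a cheaper $N$-th entry than $t_N$ --- contradicting the minimal choice of $t_N$. Therefore $B$ is well-quasi-ordered.

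To conclude, I would invoke Higman's lemma (finite sequences over a well-quasi-order are again well-quasi-ordered), applied to the map sending each $t_n$ to the finite list of its immediate subtrees, all of which lie in $B$. This yields indices $i<j$ together with a strictly increasing injection $\sigma$ matching the subtrees of $t_i$ to those of $t_j$ so that each subtree of $t_i$ embeds, via $\leq_{\mathcal B}$, into the corresponding subtree of $t_j$. Mapping the root of $t_i$ to the root of $t_j$ and pasting together these subtree embeddings produces an infimum-preserving embedding $t_i\leq_{\mathcal B}t_j$, as one sees by unwinding the recursive definition of $\leq_{\mathcal B}$ (root-to-root case). This contradicts the badness of $(t_n)$, completing the argument.

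I expect the main obstacle to be the minimal bad sequence extraction itself: it is genuinely nonconstructive and must be organised so that the minimality is available coordinate by coordinate when verifying the key claim about $B$. A secondary technical point is to confirm that the componentwise subtree embeddings supplied by Higman's lemma really do assemble into an infimum-preserving embedding of the whole trees, which requires matching the recursive characterisation of $\leq_{\mathcal B}$ precisely. It is worth noting that the strength of this argument --- the appeal to a well-founded minimisation over sequences --- is exactly what makes Kruskal's theorem so powerful and, as the course will show, unprovable in conservative extensions of $\pa$.
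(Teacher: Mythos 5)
Your proposal is correct and follows the same Nash--Williams minimal bad sequence strategy as the paper's intended proof (Exercise~\ref{ex:Kruskal}); the construction of the minimal bad sequence, the splicing argument for the set of immediate subtrees, and the appeal to coordinatewise minimality are all exactly right. Where you diverge is the finishing step: the paper deduces the contradiction from the Ramsey-based lemma of Exercise~\ref{ex:Kruskal}(a) (every infinite sequence in a well partial order has an infinite ascending subsequence), applied first to the left subtrees $s_i$ and then to the right subtrees $t_i$ of the minimal bad sequence $r_i=\circ(s_i,t_i)$, whereas you invoke Higman's lemma as a black box. For binary trees this is heavier machinery than needed: every element of a bad sequence must have exactly two immediate subtrees (a term equal to $\circ$ would immediately create a good pair, since $\circ\leq_{\mathcal B}t$ holds for all $t$), so the lists you feed to Higman's lemma all have length two, the only strictly increasing injection $\{0,1\}\to\{0,1\}$ is the identity, and Higman's lemma degenerates to the statement that a product of two well partial orders is a well partial order --- which is precisely what the Ramsey lemma delivers. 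Since Higman's lemma is itself usually proved by a minimal bad sequence argument, replacing it by the elementary Ramsey step makes the proof self-contained, which is what the paper's exercise is designed to achieve. One caveat about your opening sentence: proving Kruskal's theorem ``for all finite trees'' with the usual \emph{unordered} infimum-preserving embedding would not formally yield the theorem for $\leq_{\mathcal B}$, because $\leq_{\mathcal B}$ is the ordered (left-to-left, right-to-right) relation and so relates \emph{fewer} pairs of trees; fortunately your actual argument works with $\leq_{\mathcal B}$ and order-respecting list embeddings throughout, so this affects only how you advertise the result, not its correctness.
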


Due to the quantification over infinite sequences, Kruskal's theorem cannot be expressed in the language of Peano arithmetic, at least not without reserve. On the other hand, it is readily expressed in conservative extensions of Peano arithmetic. We will present two such extensions, denoted by $\pa[X]$ and $\aca_0$, respectively (see Sections~\ref{sect:inf-derivs} and~\ref{sect:kruskal}). The following result is the focal point of our lecture course. It follows from work of Gerhard Gentzen (see in particular~\cite{gentzen36,gentzen43}) and Dick de Jongh (unpublished, see~\cite{schmidt75}). Stronger results of the same type are due to Diana Schmidt~\cite{schmidt-habil-new} and, in particular, to Harvey Friedman (see~\cite{simpson85}).

\begin{theorem}\label{thm:kruskal-unprovable}
Kruskal's theorem for binary trees is unprovable in conservative extensions of Peano arithmetic.
\end{theorem}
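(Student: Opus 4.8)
The plan is to reduce the unprovability of Kruskal's theorem to two facts that the course has developed, namely Gentzen's consistency proof and de Jongh's computation of the maximal order type of binary trees. Throughout let $T$ be a conservative extension of $\pa$ in which Kruskal's theorem can be formalized (such as $\pa[X]$ or $\aca_0$), and fix a primitive recursive notation system for the ordinals below $\varepsilon_0$ with its ordering $\tl$. The strategy is to establish inside $T$ the chain of implications ``Kruskal's theorem $\to \ti(\tl) \to \mathrm{Con}(\pa)$'', and then to conclude by conservativity together with G\"odel's second incompleteness theorem: if $T$ proved Kruskal's theorem it would prove the arithmetical sentence $\mathrm{Con}(\pa)$, whence $\pa \vdash \mathrm{Con}(\pa)$ by conservativity, which is impossible.

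The combinatorial heart is the first implication. First I would construct, by recursion on Cantor normal forms, an assignment $\alpha \mapsto b(\alpha)$ sending each notation $\alpha \tl \varepsilon_0$ to a binary tree $b(\alpha) \in \mathcal B$, mirroring ordinal addition and base-$\omega$ exponentiation by suitable tree constructors built from the $b(\alpha_i)$ of the summands $\omega^{\alpha_i}$. The point of the construction is that it is \emph{order reflecting}: whenever $b(\alpha) \leq_{\mathcal B} b(\beta)$ one has $\alpha = \beta$ or $\alpha \tl \beta$. This is exactly the nontrivial direction of de Jongh's identity $\mathrm{otyp}(\mathcal B, \leq_{\mathcal B}) = \varepsilon_0$. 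Granting such a $b$, I would argue in $T$ as follows. Suppose $\tl$ were ill-founded and fix an infinite sequence $\alpha_0, \alpha_1, \ldots$ with $\alpha_{n+1} \tl \alpha_n$; then $(b(\alpha_n))_n$ is a sequence in $\mathcal B$, so by Kruskal's theorem there are $i < j$ with $b(\alpha_i) \leq_{\mathcal B} b(\alpha_j)$. Order reflection gives $\alpha_i = \alpha_j$ or $\alpha_i \tl \alpha_j$, both cases contradicting $\alpha_j \tl \alpha_i$ by irreflexivity and transitivity of $\tl$. Hence $\tl$ admits no infinite descending sequence. Over $T$ this is equivalent to $\ti(\tl)$: a failure of transfinite induction, given by a class $X$ with $\prog(X)$ but nonempty complement, lets one recursively select $\tl$-predecessors outside $X$ and so build a descending sequence. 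Thus $T$ proves ``Kruskal's theorem $\to \ti(\tl)$''.

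The second implication ``$\ti(\tl) \to \mathrm{Con}(\pa)$'' is Gentzen's consistency proof, which the ordinal analysis of $\pa$ in the preceding sections makes available over $T$: transfinite induction along the $\varepsilon_0$-notations bounds the ranks arising in cut elimination and thereby excludes a derivation of $0=1$. Chaining the two implications gives $T \vdash \text{Kruskal} \to \mathrm{Con}(\pa)$. If now $T$ proved Kruskal's theorem, then $T \vdash \mathrm{Con}(\pa)$, and since $\mathrm{Con}(\pa)$ is arithmetical and $T$ is conservative over $\pa$ we would obtain $\pa \vdash \mathrm{Con}(\pa)$, contradicting G\"odel's second incompleteness theorem. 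Hence Kruskal's theorem is unprovable in $T$.

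The step I expect to be the main obstacle is the order-reflecting embedding $b$, i.e.\ the lower bound $\mathrm{otyp}(\mathcal B, \leq_{\mathcal B}) \geq \varepsilon_0$ in de Jongh's theorem. One must match the hereditary structure of Cantor normal forms to the branching of binary trees so tightly that \emph{every} infimum-preserving embedding between images forces the corresponding ordinal inequality; this is a delicate simultaneous induction, and it must be carried out elementarily enough to be formalized in $T$ so that the whole reduction stays internal. A subtler but essential point is that the final step genuinely uses conservativity over $\pa$: it is only because $\mathrm{Con}(\pa)$ is arithmetical and $T$ proves no new arithmetical theorems that provability of Kruskal's theorem in $T$ collapses to the forbidden $\pa \vdash \mathrm{Con}(\pa)$.
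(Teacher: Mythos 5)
Your strategy is correct in outline, but it is genuinely different from the route the paper takes, and it is in fact the alternative route that the paper's introduction explicitly mentions (with a reference to work of Freund) in order to justify the informal generality of the theorem. You factor the argument as ``Kruskal $\to$ $\ti$ $\to$ $\mathrm{Con}(\pa)$ inside $T$, then apply conservativity and G\"odel's second incompleteness theorem.'' The paper instead never mentions $\mathrm{Con}(\pa)$ and never invokes G\"odel: it proves \emph{directly}, in the meta-theory, that $\pa[X]$ cannot prove $\ti_{\prec}$ (Gentzen's theorem, via the embedding of $\pa[X]$ into the infinitary system, cut elimination, and the bounding lemma applied to the uninterpreted predicate $X$), establishes that $\aca_0$ is conservative over $\pa[X]$ by a model-theoretic argument, and then shows in $\aca_0$ that Kruskal's theorem implies $\ti_{\prec}$ via the same order-reflecting map $\varepsilon_0\to\mathcal B$ that you call $b$. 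What the paper's route buys is that all proof-theoretic work stays meta-theoretic; what your route would buy, if completed, is genuine generality over arbitrary ``reasonable'' conservative extensions, since $\mathrm{Con}(\pa)$ is a fixed arithmetical sentence.

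The genuine gap is your second implication. You assert that ``the ordinal analysis of $\pa$ in the preceding sections makes available over $T$'' the implication $\ti\to\mathrm{Con}(\pa)$. It does not: the cut elimination proved in the course is a statement about genuinely infinite derivations, established in the meta-theory, and it cannot simply be quoted inside $T$. To carry out Gentzen's consistency proof \emph{inside} $T$ one must arithmetize proof theory -- either code the infinitary derivations primitive recursively (as in Buchholz's work cited by the paper) or use Gentzen's original finitary assignment of ordinals to $\pa$-derivations with a primitive recursive reduction procedure -- and then verify that transfinite induction along the notations drives the reduction. This is a substantial development that the course deliberately avoids, and it cannot be replaced by the semantic soundness argument, since truth for $\mathcal L_{\pa}$ is not arithmetically definable (Tarski). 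Note also that you mislocate the main obstacle: the order-reflecting map $b$ (the paper's quasi-embedding, Proposition~\ref{prop:quasi-embedding}) is a short induction over terms, easily formalized; the heavy lifting in your route is precisely the internalized implication $\ti\to\mathrm{Con}(\pa)$, together with G\"odel's second incompleteness theorem, both of which the paper's argument is designed to circumvent.
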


Let us point out that the statement of the theorem is not fully precise, because it does not specify which formalizations of Kruskal's theorem and which conservative extensions can be admitted. Strictly speaking, we will only prove the theorem for one `natural' formalization and the two conservative extensions $\pa[X]$ and $\aca_0$ that were mentioned above. At the same time, our general and somewhat informal statement of Theorem~\ref{thm:kruskal-unprovable} can be justified by related (but slightly more difficult) results: In any `reasonable' conservative extension, any `reasonable' formalization of Kruskal's theorem implies the consistency of Peano arithmetic, so that we get unprovability by G\"odel's theorem (see e.\,g.~\cite{freund-kruskal-scheme} for details). Furthermore, Harvey Friedman has formulated a ``miniaturized" version of Kruskal's theorem: the statement of this miniaturized Kruskal theorem can be formalized in Peano arithmetic itself, but the latter does not prove it (see again~\cite{simpson85}).

To avoid misunderstanding, we stress that the proof of Kruskal's theorem can be formalized in stronger axiom systems. For the case of binary trees, it suffices to go slightly beyond Peano arithmetic (by the aforementioned result of de Jongh, cf.~Remark~\ref{rmk:reification} below). Kruskal's theorem for arbitrary finite trees is unprovable in theories that are much stronger than Peano arithmetic (see~\cite{simpson85,rathjen-weiermann-kruskal}). The analogue of Kruskal's theorem for graphs (rather than just trees) is the graph minor theorem of Neil Robertson and Paul Seymour~\cite{robertson-seymour-gm}. This theorem is one of the most celebrated results in modern combinatorics, with important implications for computer science. It also leads to one of the most impressive unprovability results in mathematical logic~\cite{friedman-robertson-seymour}. In the present course, we focus on unprovability in Peano arithmetic, because this is much simpler and already very interesting.

Let us compare Theorem~\ref{thm:kruskal-unprovable} with a rather different independence result: Every bounded increasing sequence $(q_n)$ of rationals has a real number~$q$ as limit. However, it can happen that the sequence $(q_n)$ is computable while $q$ cannot be computed with prescribed precision (one will then call $(q_n)$ a Specker sequence). One can deduce that some relevant axiom systems (weaker than Peano arithmetic; see e.\,g.~\cite{simpson09}) cannot prove that all bounded increasing sequences in the reals converge. Crucially, this statement asserts the existence of an infinite object, namely, a real limit given by an infinite Cauchy sequence (or some other representation). It is the complexity of this infinite object that leads to the independence result. In other words, we use the notion of computability as an invariant that classifies (countably) infinite sets. This approach cannot be used to prove ``our" Theorem~\ref{thm:kruskal-unprovable}, since Kruskal's theorem does not assert the existence of any infinite object. Indeed, the theorem is the universal closure (``for any infinite sequence") of a statement that is concerned with finite objects only. In a sense, this makes the unprovability result more surprising and harder to show. The focus on finite objects (in particular in the miniaturized version that was mentioned above) is also relevant with respect to Hilbert's programme in the foundations of mathematics (see e.\,g.~\cite{zach16,rathjen-sieg-stanford}).

In the previous paragraph, we have explained that Theorem~\ref{thm:kruskal-unprovable} is a challenge because Kruskal's theorem does not assert the existence of infinite sets. The present course is an introduction to a powerful method that addresses this challenge: \mbox{ordinal analysis} (which is a subfield of proof theory and hence of mathematical logic). A central idea is to study infinite generalizations of proofs. While we cannot write down infinite proofs in practice, they provide an important technical tool. In particular, the ordinal rank of an infinite proof tree can serve as a bound on the statement that is being proved. The course will work out these ideas in great detail. Our selection of topics is guided by the aim to prove Theorem~\ref{thm:kruskal-unprovable}. Nevertheless, some other important results are treated along the way, so that the course provides a reasonably general introduction to ordinal analysis. Specifically, we will discuss sequent calculus (Tait style), cut elimination, and the ordinal analysis of Peano arithmetic (originally due to Gerhard Gentzen~\cite{gentzen36,gentzen43}). Some further topics are briefly discussed in the concluding Section~\ref{sect:conclusion}. The ordinal analysis of considerably stronger axiom systems is treated in a second course~\cite{second-course} by the present author.

The main sources for this course are a survey by Helmut Schwichtenberg~\cite{schwichtenberg77} and lecture notes by Wilfried Buchholz~\cite{buchholz-lecture-03}. As a non-exhaustive list of additional resources, we mention the survey articles by Michael Rathjen~\cite{rathjen-realm,rathjen-icm} and by Rathjen and Wilfried Sieg~\cite{rathjen-sieg-stanford}, as well as the textbooks by Kurt Sch\"utte~\cite{schuette77}, Gaisi Takeuti~\cite{takeuti-proof-theory}, Jean-Yves Girard~\cite{girard87}, Wolfram Pohlers~\cite{pohlers-proof-theory}, and Helmut Schwichtenberg and Stanley Wainer~\cite{schwichtenberg-wainer}.

\section{Sequent calculus for predicate logic}\label{sect:sequent-calc-PL}

In ordinal analysis and other parts of proof theory, we work directly with proofs as syntactic objects. For this reason, it is important to choose a proof system with convenient technical properties. A canonical choice for ordinal analysis is some variant of sequent calculus. In this section we present a variant due to William Tait (``Tait-style"; cf.~\cite{tait-style}), which is streamlined for classical logic. For other variants of sequent calculus (in particular ones that are suitable for intuitionistic logic), we refer to the textbook by Anne Troelstra and Helmut Schwichtenberg~\cite{schwichtenberg-troelstra}.

Fix some signature for first order logic. Equality has no special status but may, of course, be included as a relation symbol. Terms are defined in the usual way. Formulas are built up from literals (negated and unnegated prime formulas), using the connectives $\land,\lor$ and the quantifiers $\forall,\exists$. In other words, we only admit formulas in negation normal form. Negation becomes a defined operation on formulas, which employs de Morgan's rules and deletes double negations:

\begin{definition}\label{def:NNF}
The negation $\sim\varphi$ of a formula $\varphi$ is recursively defined by
\begin{alignat*}{5}
\sim P\mathbf t&:=\neg P\mathbf t,\quad&\sim(\varphi\land\psi)&:=(\sim\varphi)\lor(\sim\psi),\quad&\sim\forall x.\varphi&:=\exists x.\sim\varphi,\\
\sim(\neg P\mathbf t)&:=P\mathbf t,\quad&\sim(\varphi\lor\psi)&:=(\sim\varphi)\land(\sim\psi),\quad&\sim\exists x.\varphi&:=\forall x.\sim\varphi.
\end{alignat*}
We will also write $\neg\varphi$ for $\sim\varphi$ and $\varphi\to\psi$ for $(\sim\varphi)\lor\psi$.
\end{definition}

A straightforward induction shows that $\neg\neg\varphi$ and $\varphi$ are the same formula. This explains why our approach is suitable for classical logic only.

We agree that a \emph{sequent} is a finite set of formulas (in other settings one may wish to consider multisets or sequences, to control the multiplicity or order of formulas). Sequents are denoted by uppercase greek letters, in particular by $\Gamma$ and $\Delta$. In the context of sequents, a comma will often denote a union, and set braces $\{\cdot\}$ are frequently omitted: for example we write $\Delta,\Gamma,\varphi$ instead of $\Delta\cup\Gamma\cup\{\varphi\}$. Intuitively, a sequent $\Gamma=\varphi_1,\ldots,\varphi_n$ denotes the disjunction $\bigvee\Gamma=\varphi_1\lor\ldots\lor\varphi_n$ of the formulas that it contains. In the following, we write $[x/t]$ to denote substitution of $x$ by~$t$ (with suitable renaming of bound variables).

\begin{definition}\label{def:seq-calc}
The rules of (Tait style) sequent calculus (for predicate logic) are
\vspace*{.1\baselineskip}
\begin{gather*}
\AxiomC{}
\RightLabel{($\theta$ a prime formula),}
\UnaryInfC{$\Delta,\theta,\neg\theta$}
\DisplayProof
\qquad
\AxiomC{$\Gamma,\varphi_0$}
\AxiomC{$\Gamma,\varphi_1$}
\RightLabel{,}
\BinaryInfC{$\Delta,\Gamma,\varphi_0\land\varphi_1$}
\DisplayProof
\qquad
\AxiomC{$\Gamma,\varphi_i$}
\RightLabel{,}
\UnaryInfC{$\Delta,\Gamma,\varphi_0\lor\varphi_1$}
\DisplayProof\\[2ex]
\AxiomC{$\Gamma,\varphi[x/y]$}
\RightLabel{($y$ not free in $\Gamma,\forall x.\varphi$),}
\UnaryInfC{$\Delta,\Gamma,\forall x.\varphi$}
\DisplayProof
\qquad
\AxiomC{$\Gamma,\varphi[x/t]$}
\RightLabel{,}
\UnaryInfC{$\Delta,\Gamma,\exists x.\varphi$}
\DisplayProof
\qquad
\AxiomC{$\Gamma,\varphi$}
\AxiomC{$\Gamma,\neg\varphi$}
\RightLabel{.}
\BinaryInfC{$\Delta,\Gamma$}
\DisplayProof
\end{gather*}
\vspace*{.25\baselineskip}

\noindent A derivation (of $\Gamma$) is a finite tree that is built according to these rules (with $\Gamma$ at the root). To denote that $\Gamma$ has a derivation, we write $\textsf{PL}\vdash\Gamma$ (where $\textsf{PL}$ is omitted when the context makes it clear that we work in predicate logic). The first and last rule above are called \emph{axiom} and \emph{cut}. 
\end{definition}

Note that the sequents in the rules need not be disjoint. This means, e.\,g., that
\begin{prooftree}
\AxiomC{$\varphi_0\lor\varphi_1,\varphi_0$}
\UnaryInfC{$\varphi_0\lor\varphi_1$}
\end{prooftree}
is a legitimate instance of the rule for disjunction (as $\Gamma,\varphi_0\lor\varphi_1$ and $\varphi_0\lor\varphi_1$ are the same sequent when $\Gamma=\varphi_0\lor\varphi_1$). As we allow to add an arbitrary sequent~$\Delta$ in the conclusion of each rule, it is immediate that $\vdash\Gamma$ and $\Gamma\subseteq\Gamma'$ imply $\vdash\Gamma'$ (with a derivation of the same height). This property is known as~\emph{weakening} (as it yields a longer and hence weaker disjunction). In fact, it is sufficient to allow the addition of formulas $\Delta$ at axioms only. We allow to add $\Delta$ at any point in the proof, in order to avoid technicalities in connection with the variable condition (cf.~\cite[Section~2.3]{schwichtenberg77}). In any case, once weakening is available, it justifies the alternative forms
\begin{equation*}
\AxiomC{$\Gamma_0,\varphi_0$}
\AxiomC{$\Gamma_1,\varphi_1$}
\BinaryInfC{$\Delta,\Gamma_0,\Gamma_1,\varphi_0\land\varphi_1$}
\DisplayProof\qquad\text{and}\qquad
\AxiomC{$\Gamma_0,\varphi$}
\AxiomC{$\Gamma_1,\neg\varphi$}
\RightLabel{.}
\BinaryInfC{$\Delta,\Gamma_0,\Gamma_1$}
\DisplayProof
\end{equation*}
of conjunction and cut. In the rule for the universal quantifier, the substitution $[x/y]$ reflects the fact that bound variables may be renamed. The following terminology can be useful to describe derivations: In any of the rules above, the displayed formula in the lower sequent is called \emph{principal} ($\varphi_0\land\varphi_1$ for conjunction and none for cut). The displayed formulas in the upper sequent of each rule are called \emph{minor} ($\varphi$ and $\neg\varphi$ for cut). The remaining formulas (i.\,e., those in~$\Delta,\Gamma$) are called~\emph{side formulas}. Concrete derivations in sequent calculus are usually displayed as follows:

\begin{example}\label{ex:drinker}
The so-called drinker formula can be derived as
\begin{prooftree}
\AxiomC{$\neg Py,Py$}
\UnaryInfC{$\neg Py\lor\forall y.Py,Py$}
\UnaryInfC{$\exists_x(Px\to\forall y.Py),Py$}
\RightLabel{($y$ not free in $\exists_x(Px\to\forall y.Py)$)}
\UnaryInfC{$\exists_x(Px\to\forall y.Py),\forall y.Py$}
\UnaryInfC{$\exists_x(Px\to\forall y.Py),\neg Px\lor\forall y.Py$}
\UnaryInfC{$\exists_x(Px\to\forall y.Py)$}
\end{prooftree}
Note that $Px\to\forall y.Py$ and $\neg Px\lor\forall y.Py$ are literally the same formula, since we treat implication as meta-notation rather than a logical symbol.
\end{example}

Part~(c) of the following exercise provides a form of completeness. A direct and very elegant proof of completeness for sequent calculus will be given later.

\begin{exercise}\label{exer:sequent-basic}
(a) Show that the given sequent calculus for predicate logic is sound, i.\,e., that $\bigvee\Gamma$ is logically valid whenever we have $\vdash\Gamma$.

\noindent (b) Derive $\exists x(Px\to Pfx)$ and other logically valid formulas in sequent calculus.

\noindent (c) Consider natural deduction with logical symbols $\land,\to,\forall$ and $\bot$. Let $\varphi^\sim$ be the negation normal form of a formula $\varphi$ that is built from these symbols. More precisely, we keep $\bot$ as an uninterpreted predicate symbol, so that $(\varphi\to\bot)^\sim$ is $\neg\varphi^\sim\lor\bot$ (still with $\neg$ as meta-notation). Show how a natural deduction proof of $\varphi$ from open assumptions $\psi_1,\ldots,\psi_n$ can be transformed into a sequent calculus proof of $\bot,\neg\psi_1^\sim,\ldots,\neg\psi_n^\sim,\varphi^\sim$. \emph{Hint:} As preparation, show that $\vdash\varphi,\neg\varphi$ holds for any formula~$\varphi$ (the axioms cover prime formulas only). Deduce that $\vdash\Gamma,\varphi_0\lor\varphi_1$ implies $\vdash\Gamma,\varphi_0,\varphi_1$ (use cut even though this is not necessary, cf.~\cite[Lemma~1.3(c)]{buchholz-lecture-03}).
\end{exercise}

The exercise provides even more evidence that we are concerned with classical logic. Let us briefly indicate a suitable sequent calculus for the intuitionistic case:

\begin{remark}
For some purposes, it is preferable to consider sequents of the form $\varphi_1,\ldots,\varphi_m\Rightarrow\psi_1,\ldots,\psi_n$, to be interpreted as $\varphi_1\land\ldots\land\varphi_m\to\psi_1\lor\ldots\lor\psi_n$. The disadvantage is that this increases the number of rules, which makes case distinctions tedious. The great advantage is that intuitionistic logic is included in a very elegant way: to switch from the classical to the intuitionistic case, it suffices to demand $n\leq 1$, so that any sequent has the form $\varphi_1,\ldots,\varphi_m\Rightarrow\psi$ or $\varphi_1,\ldots,\varphi_m\Rightarrow\bot$ (for $n=0$). We refer to~\cite{schwichtenberg-troelstra} for details.
\end{remark}

Recall that our overall aim is to prove independence results. If we want to achieve this by using sequent calculus, the latter needs to be complete. To show that it is, we employ Kurt Sch\"utte's elegant method of ``deduction chains" (see~\cite{schuette56,schuette77}).

\begin{theorem}\label{thm:PL-complete}
The given sequent calculus for predicate logic is complete. Indeed, if $\bigvee\Gamma$ is logically valid, there is a derivation $\mathsf{PL}\vdash\Gamma$ that does not use the cut rule.
\end{theorem}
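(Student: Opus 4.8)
The plan is to prove the contrapositive: assuming that $\Gamma$ admits no cut-free derivation, I would construct a structure in which every formula of $\Gamma$ is false, so that $\bigvee\Gamma$ is not logically valid. The construction proceeds by a systematic, exhaustive proof search (Sch\"utte's deduction chains). First fix enumerations of all formulas and of all terms of the language, the latter augmented by countably many fresh free variables to serve as witnesses. Starting from the endsequent $\Gamma$ at the root, build a tree of sequents by reading the rules from conclusion to premise and processing the formulas of each sequent in a fair, round-robin manner: a disjunction $\varphi_0\lor\varphi_1$ is reduced by adjoining both $\varphi_0$ and $\varphi_1$; a conjunction $\varphi_0\land\varphi_1$ causes a binary split, one successor gaining $\varphi_0$ and the other $\varphi_1$; a universal $\forall x.\varphi$ is reduced by adjoining one instance $\varphi[x/y]$ with $y$ a fresh variable; and an existential $\exists x.\varphi$ is reduced by adjoining the next instance $\varphi[x/t]$ prescribed by the term enumeration, the formula $\exists x.\varphi$ itself being retained so that all of its instances are eventually produced. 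Literals are carried along unchanged, and a branch is stopped as soon as its current sequent contains a complementary pair $\theta,\sim\theta$ with $\theta$ prime. Since only conjunctions cause splitting, this \emph{search tree} is binary branching.

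Now comes the dichotomy. If every branch of the search tree terminates in such an axiom sequent, then the tree is well-founded; being finitely branching it is finite by K\"onig's lemma, and, read with $\Gamma$ at the root, it is a cut-free derivation $\mathsf{PL}\vdash\Gamma$ (each reduction step corresponds to one or two instances of the associated rule, principal formulas being retained, which is harmless since sequents are sets). This contradicts our assumption, so some branch never meets an axiom. Let $S$ be the union of all sequents occurring along such a branch. By the stopping condition $S$ contains no pair $\theta,\sim\theta$ with $\theta$ prime, and by fairness of the search $S$ is \emph{saturated}: if $\varphi_0\lor\varphi_1\in S$ then $\varphi_0,\varphi_1\in S$; if $\varphi_0\land\varphi_1\in S$ then $\varphi_0\in S$ or $\varphi_1\in S$; if $\exists x.\varphi\in S$ then $\varphi[x/t]\in S$ for every term $t$; and if $\forall x.\varphi\in S$ then $\varphi[x/y]\in S$ for some $y$.

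Finally I would read a countermodel off $S$. Let the universe be the (nonempty) set of terms of the extended language, interpret each function symbol syntactically, and declare a prime formula $P\mathbf t$ to be \emph{false} exactly when $P\mathbf t\in S$. This is well defined precisely because $S$ contains no complementary pair of literals, so the requirements imposed by $P\mathbf t\in S$ and by $\sim P\mathbf t\in S$ never clash. A straightforward induction on the build-up of formulas, invoking the four saturation clauses at the corresponding steps, then shows that every $\psi\in S$ is false in this structure. Since $\Gamma\subseteq S$, every formula of $\Gamma$ is false, whence $\bigvee\Gamma$ is not valid, as required.

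The delicate part is the bookkeeping that secures saturation, rather than the concluding induction, which is routine. The clause for $\exists$ in particular cannot be achieved by decomposing once: to force $\varphi[x/t]\in S$ for all $t$, the search must return to each existential repeatedly and feed it successive terms, so the schedule has to dovetail fairly over the (growing) list of active formulas and the term enumeration, while still allowing branches to close in finitely many steps so that the K\"onig-lemma step genuinely yields a finite derivation. One must also respect the variable condition when introducing witnesses for $\forall$, which is exactly why countably many spare free variables are kept in reserve. Once the fair schedule is set up and shown to produce all four saturation properties, and once the terminating trees are checked to be genuine cut-free derivations, the completeness statement follows.
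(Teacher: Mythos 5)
Your proposal is correct and follows essentially the same route as the paper's proof: Sch\"utte's method of deduction chains, i.e.\ a fair backwards proof search whose finite/terminating case yields a cut-free derivation via K\H{o}nig's lemma and whose infinite branch yields a term model falsifying every formula it contains. The only differences are cosmetic (you adjoin both disjuncts at once and isolate explicit saturation clauses, whereas the paper handles disjunctions by rotation of ordered sequents and runs the induction directly on the branch construction), so the two arguments are interchangeable.
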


The role of cuts is further discussed below. Note that $\vdash\neg\psi_1,\ldots,\neg\psi_n,\varphi$ amounts to a derivation of $\varphi$ from open assumptions~$\psi_1,\ldots,\psi_n$. In this sense, the theorem covers derivations with axioms. In particular, we may include axioms for equality. If we do, the model below will interpret equality as an equivalence relation, and one can take the quotient as usual. The following construction can be generalized to yield models of infinite theories (now using the cut rule; cf.~\cite[Section~4]{rathjen-model-bi}). We avoid this generalization and appeal to compactness instead (the only drawback is that we do not get a new proof of the latter).

\begin{proof}
The idea is to search for a derivation of $\Gamma$ by applying rules backwards. If the search does not terminate, the attempted derivation tree has an infinite branch, which will determine a countermodel to~$\bigvee\Gamma$. To make this precise, let $2^{<\omega}$ be the tree of finite sequences with entries from~$\{0,1\}$. Given $\sigma=\langle\sigma_0,\ldots,\sigma_{n-1}\rangle\in 2^{<\omega}$ and $i\in\{0,1\}$, we put $\sigma\star i:=\langle \sigma_0,\ldots,\sigma_{n-1},i\rangle$. By recursion on sequences, we will define a tree $\mathcal D\subseteq 2^{<\omega}$ and a sequent $\Gamma(\sigma)$ for each $\sigma\in\mathcal D$. Here we view sequents as sequences (rather than sets) of formulas. To apply the previous definitions, it suffices to forget the order. Let us also fix an enumeration $t_0,t_1,\ldots$ of all terms (countability can be ensured by restricting to symbols from~$\Gamma$). In the base case of the recursion, we stipulate $\langle\rangle\in\mathcal D$ and $\Gamma(\langle\rangle):=\Gamma$ (the sequent from the theorem). For the recursion step, assume that $\sigma\in\mathcal D$ is given and $\Gamma(\sigma)$ is known. We declare that $\sigma\in\mathcal D$ is a leaf if $\Gamma(\sigma)$ contains some prime formula and its negation. Otherwise, write $\Gamma(\sigma)=\varphi,\Delta$ and distinguish the following cases (here $\varphi$ is the first entry of our `ordered sequent'; it is not hard to see that $\Gamma(\sigma)$ is never empty, except in the trivial case of $\Gamma=\emptyset$):
\begin{enumerate}[label=(\roman*)]
\item If $\varphi$ is a literal, declare $\sigma\star 0\in\mathcal D$ (and $\sigma\star 1\notin\mathcal D$) with $\Gamma(\sigma\star 0):=\Delta,\varphi$.
\item For $\varphi=\varphi_0\land\varphi_1$, let $\mathcal D$ contain $\sigma\star 0$ and $\sigma\star 1$ with $\Gamma(\sigma\star i):=\Delta,\varphi,\varphi_i$.
\item For $\varphi=\varphi_0\lor\varphi_1$, put $\sigma\star 0\in\mathcal D$ with
\begin{equation*}
\Gamma(\sigma\star 0):=\begin{cases}
\Delta,\varphi,\varphi_0 & \text{if $\Delta$ does not contain $\varphi_0$,}\\
\Delta,\varphi,\varphi_1 & \text{otherwise.}
\end{cases}
\end{equation*}
\item For $\varphi=\forall x.\psi$, put $\sigma\star 0\in\mathcal D$ and $\Gamma(\sigma\star 0):=\Delta,\varphi,\psi[x/y]$ for the first variable $y$ that is not free in $\varphi,\Delta$ (with respect to some fixed enumeration).
\item For $\varphi=\exists x.\psi$, put $\sigma\star 0\in\mathcal D$ and $\Gamma(\sigma\star 0):=\Delta,\varphi,\psi[x/t_k]$ for the smallest~$k$ such that $\Delta$ does not contain $\psi[x/t_k]$.
\end{enumerate}
If $\mathcal D$ is finite, we essentially have a derivation of $\Gamma$ in sequent calculus (except that the trivial `inferences' in (i) should be omitted). Note that this derivation does not involve cuts. Now assume $\mathcal D$ is infinite. Then K\H{o}nig's lemma (in its `weak' form for binary trees) yields an infinite branch, i.\,e., a function $f:\mathbb N\to\{0,1\}$ with $f[n]:=\langle f(0),\ldots,f(n-1)\rangle\in\mathcal D$ for all $n\in\mathbb N$. Put $\mathcal F:=\bigcup_{n\in\mathbb N}\Gamma(f[n])$ (viewing sequents as sets). To define a model $\mathcal M$ over the set of terms, let $f^{\mathcal M}\mathbf t:=f\mathbf t$ and
\begin{equation*}
\mathbf t\in P^{\mathcal M}\quad:\Leftrightarrow\quad P\mathbf t\notin\mathcal F.
\end{equation*}
I claim that $\varphi\in\mathcal F$ implies $\mathcal M\nvDash\varphi$, where all variables (and hence all terms) are interpreted as themselves. In view of $\Gamma=\Gamma(f[0])\subseteq\mathcal F$, this entails $\mathcal M\nvDash\bigvee\Gamma$, which completes the proof. The open claim is shown by induction over the height of formulas. We treat two crucial cases and leave the rest to the reader. First consider $\varphi=\neg P\mathbf t\in\mathcal F$. It suffices to show $P\mathbf t\notin\mathcal F$, since this entails $\mathcal M\vDash P\mathbf t$ and hence $\mathcal M\nvDash\varphi$. The crucial observation is that $\Gamma(f[n])\subseteq\Gamma(f[n+1])$ holds by construction (some authors avoid this property, which makes their proof search more efficient but harder to describe). So if $P\mathbf t$ and $\neg P\mathbf t$ did lie in $\mathcal F$, they would lie in a common sequent $\Gamma(f[n])$. But then $f[n]$ would be a leaf by construction, against the assumption that~$f$ is a branch. Finally, consider a formula~$\varphi=\exists x.\psi\in\mathcal F$. We get $\mathcal M\nvDash\varphi$ if $\mathcal M\nvDash\psi[x/t_k]$ holds for all $k\in\mathbb N$, which reduces to $\psi[x/t_k]\in\mathcal F$ by induction. Aiming at a contradiction, assume that $k$ is minimal with $\psi[x/t_k]\notin\mathcal F$. As before, some sequent $\Gamma(f[n])$ will contain $\varphi$ and $\psi[x/t_i]$ for all~$i<k$. Since we constantly rotate sequents, we may increase~$n$ to get $\Gamma(f[n])=\varphi,\Delta$. Now the construction yields $\psi[x/t_k]\in\Gamma(f[n+1])\subseteq\mathcal F$, as needed to conclude.
\end{proof}

Let us draw the following conclusion:

\begin{corollary}
If we have $\mathsf{PL}\vdash\Gamma$, then $\Gamma$ can be derived without cut.
\end{corollary}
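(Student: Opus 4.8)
The plan is to obtain this cut-elimination statement as an immediate \emph{semantic} corollary, by routing through soundness and the sharper half of the completeness theorem that has just been established. I would not attempt any direct surgery on the given derivation (such as permuting or reducing cuts). Instead, the first step is to invoke soundness, Exercise~\ref{exer:sequent-basic}(a): from the hypothesis $\mathsf{PL}\vdash\Gamma$ we conclude that the disjunction $\bigvee\Gamma$ is logically valid. Crucially, this direction is indifferent to whether the given derivation uses cuts, so it lets us discard all information about the original proof except the bare fact of its existence.

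The key step is then to feed this back into the second, stronger assertion of Theorem~\ref{thm:PL-complete}. That theorem does not merely assert completeness: it guarantees that whenever $\bigvee\Gamma$ is logically valid, there is a derivation $\mathsf{PL}\vdash\Gamma$ that does \emph{not} use the cut rule (this is exactly what the deduction-chain search produces, since the backward rule applications in its construction never introduce a cut). Chaining the two steps---soundness to pass from the cut-using derivation to validity, and the cut-free completeness bound to pass from validity back to a cut-free derivation---yields the claim directly.

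There is no genuine obstacle to surmount here; the corollary is essentially a restatement of what Theorem~\ref{thm:PL-complete} already delivers, and the proof is a single sentence once both ingredients are in place. The one point worth flagging is conceptual rather than technical: this argument is thoroughly non-constructive. It passes through the interpretation in the term model and, when the search tree is infinite, through K\H{o}nig's lemma, and it provides \emph{no} bound on the height or size of the resulting cut-free derivation in terms of the original one. For the ordinal analysis to come, one will instead need an effective, syntactic cut-elimination procedure carrying explicit ordinal bounds, which the present semantic detour deliberately does not supply.
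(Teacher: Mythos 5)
Your proof is correct and is exactly the paper's own argument: soundness gives validity of $\bigvee\Gamma$, and the cut-free half of Theorem~\ref{thm:PL-complete} then yields a cut-free derivation. Your closing remarks on the non-constructive, semantic character of this route also match the paper's own discussion following the corollary.
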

\begin{proof}
By soundness, the assumption entails that $\bigvee\Gamma$ is logically valid. A cut free derivation can now be obtained by the previous theorem.
\end{proof}

We have given a so-called \emph{semantic} proof of \emph{cut elimination}. This proof does not use the given derivation $\mathsf{PL}\vdash\Gamma$ with cuts, except as a guarantee for validity. In contrast, a \emph{syntactic} proof is one that starts with the given derivation and removes cuts by a sequence of concrete transformations. We will later give such a proof in a different setting (see also the discussion of ``cuts as lemmata" below). For a syntactic proof in the setting of predicate logic we refer to~\cite{schwichtenberg77}. Syntactic proofs will usually yield more information, e.\,g., on the length of proofs. Also, semantic proofs are not always available: In the next section, we discuss a sequent calculus (with infinite derivations) that is sound and complete for the standard structure of natural numbers. Truth in this structure is not definable by a formula of first order arithmetic, as shown by Alfred Tarski~\cite{tarski36}. For this reason, Peano arithmetic cannot accommodate the usual proof of soundness, which is used in semantic cut elimination. At the same time, syntactic cut elimination for infinite derivations can be formalized in Peano arithmetic (up to issues of well foundedness; see e.\,g.~\cite{buchholz91}).

Theorem~\ref{thm:PL-complete} may give the impression that the cut rule is superfluous. This impression is misleading for various reasons: First, some applications require versions of sequent calculus in which not all cuts can be eliminated (cf.~the \emph{partial cut elimination} property in Exercise~\ref{ex:ISigma_1}). Secondly, even when all cuts can be eliminated in the end, the cut rule may play an important role in intermediate steps. As an example, the embedding of natural deduction into sequent calculus (part~(c) of Exercise~\ref{exer:sequent-basic}) is hardly feasible without the cut rule. This is related to our final point: the shortest cut free proof can be super-exponentially longer than a proof with cuts (due to Richard Statman and Vladimir Orevkov, see e.\,g.~\cite[Section~5]{pudlak-tl-98}; Exercise~\ref{exer:herbrand}(b) reveals a connection with the size of Herbrand disjunctions).

Informally, cuts can be interpreted as applications of lemmata in the proof of a theorem. This is best explained in an example: consider the (incomplete) derivation
\begin{prooftree}
\AxiomC{$\neg\psi[x/t],\varphi$}
\UnaryInfC{$\exists x.\neg\psi,\varphi$}
\AxiomC{$\psi$}
\UnaryInfC{$\forall x.\psi$}
\RightLabel{(Cut).}
\BinaryInfC{$\varphi$}
\end{prooftree}
The idea is that $\forall x.\psi$ is a lemma, which we establish by proving~$\psi$ for arbitrary~$x$ (as captured by the variable condition). To deduce our theorem~$\varphi$, we only use a specific instance~$\psi[x/t]$ of this lemma (note that $\neg\psi[x/t],\varphi$ corresponds to $\psi[x/t]\to\varphi$). Now we could substitute $x:=t$ throughout the proof of the lemma, to get a proof of~$\psi[x/t]$. Our theorem $\varphi$ could then be deduced as
\begin{prooftree}
\AxiomC{$\neg\psi[x/t],\varphi$}
\AxiomC{$\psi[x/t]$}
\RightLabel{(Cut).}
\BinaryInfC{$\varphi$}
\end{prooftree}
Note that the new cut formula $\psi[x/t]$ has smaller height than the cut formula $\forall x.\psi$ above. Syntactic cut elimination removes all cuts or ``lemmata" by iterating (more general versions of) the given transformation. In some sense, this makes the proof more direct. However, it can also make it less transparent and less modular (general lemmata can be reused). Also recall that cut free proofs can be unfeasible due to their extreme length. This can be seen as evidence that lemmata are indispensable for mathematical practice. Particularly interesting situations arise when lemmata involve ``abstract concepts" while the theorem is very ``concrete".

At the same time, certain information can be tracked through cut free derivations, while cuts interrupt the ``flow of information". More concretely, certain proofs by induction over derivations will only go through if there are no cuts. The following important cases are given as exercises, because the results will not be used in this course. For part~(b) of the exercise, it may help to adopt the following view on rules: if the last rule in a derivation of~$\Gamma$ introduces the formula $\varphi_0\lor\varphi_1$ (which is thus contained in~$\Gamma$), then this rule has premise $\Delta\subseteq\Gamma,\varphi_i$ for some $i\in\{0,1\}$.

\begin{exercise}\label{exer:herbrand}
(a) Show that any formula in a cut free derivation of~$\varphi$ is a subformula of~$\varphi$ (where any instance $\psi[x/t]$ counts as a subformula of~$\forall x.\psi$ and $\exists x.\psi$).

(b) Prove the following version of Herbrand's theorem (more existential quantifiers could be accommodated): For a formula~$\theta$ without quantifiers, a cut free derivation of~$\exists x.\theta$ can be transformed into a derivation of $\theta[x/t_1],\ldots,\theta[x/t_n]$ for suitable terms~$t_i$. \emph{Hint:} Use induction to show that a derivation of $\Gamma$ yields a derivation of $\Gamma\backslash\{\exists x.\theta\},\theta[x/t_1],\ldots,\theta[x/t_n]$, if $\Gamma$ contains no quantifiers except in~$\exists x.\theta$.
\end{exercise}

\section{Induction and infinite derivations}\label{sect:inf-derivs}

The present section introduces Peano arithmetic and an important tool to investigate it: a system of infinite derivations. In particular, we will show that induction along `large' well orders cannot be proved by cut free derivations of `small' infinite height. This is one central ingredient for our proof that Peano arithmetic cannot establish Kruskal's theorem for binary trees.

Our approach is rather robust with respect to the chosen presentation of Peano arithmetic. For definiteness, we agree to follow the textbook by Petr H\'ajek and Pavel Pudl\'ak~\cite{hajek91}: Let~$\mathcal L_{\mathsf{PA}}$ be the first order language that consists of a constant~$0$, function symbols~$S$ (unary successor) and $+,\times$ (binary addition and multiplication), as well as binary relation symbols~$\leq$ and~$=$. We write $\overline n$ for the $n$-th numeral, which is given by the recursive clauses $\overline 0=0$ and $\overline{n+1}=S\overline n$.

As in the previous section, formulas are built up from literals by the connectives $\land,\lor$ and the quantifiers $\forall,\exists$; other connectives may be used as abbreviations. When we say that a formula is true, we refer to the standard interpretation of $\mathcal L_{\sf{PA}}$ in the structure of natural numbers. The axioms of Peano arithmetic ($\mathsf{PA}$) consist of the usual equality axioms, the axioms of Robinson arithmetic (as given in~\cite[Definition~I.1.1]{hajek91}), and all instances
\begin{equation}\tag{$\mathcal I\varphi$}
\varphi[x/0]\land\forall x(\varphi\to\varphi[x/Sx])\to\forall x.\varphi
\end{equation}
of induction, where $\varphi$ can be any $\mathcal L_{\mathsf{PA}}$-formula (possibly with further free variables).

Note that the induction scheme does only apply to properties that are defined by a formula. For our approach, it will be crucial to transcend this limitation: Let $\mathcal L_{\mathsf{PA}}^X$ be the extension of $\mathcal L_{\mathsf{PA}}$ by a unary relation symbol $X$, which does not have a fixed interpretation over~$\mathbb N$. The $\mathcal L_{\mathsf{PA}}^X$-theory $\mathsf{PA}[X]$ consists of the equality axioms for the extended language, the axioms of Robinson arithmetic, and the induction axioms $\mathcal I\varphi$ for all $\mathcal L_{\mathsf{PA}}^X$-formulas~$\varphi$.

To motivate our analysis of Peano arithmetic, we consider a somewhat simpler case first. The following is a classical and important result. We present it in the form of an exercise, because it will not be used in the rest of this lecture (for complete presentations, see e.\,g.~\cite[Paragraph~2]{buchholz-lecture-03} or~\cite[Section~3.1]{buss-arithm-98}).

\begin{exercise}\label{ex:ISigma_1}
An occurrence of a quantifier in an $\mathcal L_{\mathsf{PA}}$-formula is called bounded if it has the form
\begin{equation*}
\forall x\leq t.\varphi\,\equiv\,\forall x(x\leq t\to\varphi)\quad\text{or}\quad\exists x\leq t.\varphi\,\equiv\,\exists x(x\leq t\land\varphi).
\end{equation*}
A formula is $\Delta_0$ if all quantifiers are bounded. It is $\Sigma_1$ (or $\Pi_1$) if it has the form $\exists x.\theta$ (resp.~$\forall x.\theta$) for a $\Delta_0$-formula~$\theta$. The theory $\mathsf{I\Sigma_1}$ is defined as $\mathsf{PA}$, except that $\mathcal I\varphi$ is only an axiom when $\varphi$ is $\Sigma_1$. We point out that $\mathsf{I\Sigma_1}$ coincides with $\mathsf{I\Pi_1}$ (defined analogously; cf.~\cite[Lemma~I.2.12]{hajek91}). The aim of this exercise is to prove the following result of Charles Parsons~\cite{parsons70}: If $\mathsf{I\Sigma_1}$ proves $\forall x\exists y.\varphi$ for a $\Sigma_1$-formula~$\varphi$, there is a primitive recursive function $f:\mathbb N\to\mathbb N$ such that $\exists y\leq\overline{f(n)}.\varphi[x/\overline n]$ holds for all~$n\in\mathbb N$. In order to achieve this, we extend the sequent calculus from Definition~\ref{def:seq-calc} as follows: First, all equality axioms and all axioms of Robinson arithmetic are added as axioms (rules without premises). We assume that these axioms contain no unbounded existential quantifiers (note that we may replace $\exists z$ by $\exists z\leq y$ in axiom (Q8) of~\cite[Definition~I.1.1]{hajek91}). Secondly, we extend our sequent calculus by the induction rule
\begin{equation*}
\AxiomC{$\Gamma,\varphi[x/0]$}
\AxiomC{$\Gamma,\neg\varphi,\varphi[x/Sx]$}
\RightLabel{($x$ not free in~$\Gamma,\forall x.\varphi$),}
\BinaryInfC{$\Delta,\Gamma,\varphi[x/t]$}
\DisplayProof
\end{equation*}
where $\varphi$ must be $\Sigma_1$ and $t$ can be any term. The following is worth checking:
\begin{enumerate}[label=(\alph*)]
\item Any logical consequence of~$\mathsf{I\Sigma_1}$ can be derived in our extended sequent calculus. \emph{Hint:} Derive the axioms, in particular the induction axioms in the usual form. Then invoke Theorem~\ref{thm:PL-complete} and employ cuts.
\end{enumerate}
In the context of this exercise, we say that a cut
\begin{equation*}
\AxiomC{$\Gamma,\varphi$}
\AxiomC{$\Gamma,\neg\varphi$}
\BinaryInfC{$\Delta,\Gamma$}
\DisplayProof
\end{equation*}
is free if~$\varphi$ (or equivalently~$\neg\varphi$) contains both unbounded universal and unbounded existential quantifiers. Crucially, the latter holds for no principal formula of an axiom or rule of our extended sequent calculus (thanks to the formulation of induction as a rule). This ensures the following property (`partial cut elimination', see~\cite[Theorem~1.8]{buchholz-lecture-03}), which you may use without proof: If $\Gamma$ can be derived in our extended sequent calculus, then it has a derivation without free cuts. In order to derive Parson's result, we need one last ingredient (`asymmetric interpretation'): Given $m,n\in\mathbb N$, write $\varphi^{m,n}$ for the formula that results from~$\varphi$ when we replace all unbounded quantifiers $\forall x$ and $\exists y$ by the bounded quantifiers $\forall x\leq\overline m$ and $\exists y\leq\overline n$, respectively. We say that a sentence~$\varphi$ is witnessed by $f:\mathbb N\to\mathbb N$ if $\varphi^{m,f(m)}$ is true for all~$m\in\mathbb N$. A formula with free variables is witnessed by~$f$ if the same holds for its universal closure. Show the following:
\begin{enumerate}[label=(\alph*)]\setcounter{enumi}{1}
\item If $\Gamma$ is derivable in our extended sequent calculus, then $\bigvee\Gamma$ is witnessed by some primitive recursive function. Hence the aforementioned result by Parsons holds. \emph{Hint:} Argue by induction over derivations without free cuts. You will use that $\varphi^{m,n}$ implies $\varphi^{k,l}$ when $k\leq m$ and $n\leq l$. For this it is crucial that formulas are in negation normal form, so that all subformulas occur positively. Apart from the induction and cut rules, one should pay special attention to the introduction of a \emph{bounded} universal quantifier.
\end{enumerate}
To prove the induction step for a cut over~$\varphi$ and $\neg\varphi$, you will have used that
\begin{equation*}
(\neg\varphi)^{f(m),g(f(m))}\,\equiv\,\neg\left(\varphi^{m,f(m)}\right)
\end{equation*}
holds when $\varphi$ contains no unbounded universal quantifiers. The given equality does not hold for general~$\varphi$. This explains why the approach does not extend from~$\mathsf{I\Sigma_1}$ to full Peano arithmetic: there the induction formulas can have arbitrary quantifier complexity, so that partial cut elimination has no real effect.
\end{exercise}

The exercise provides some evidence that full Peano arithmetic is beyond the reach of cut elimination as we have seen it so far. More precisely, the axioms of $\pa$ have arbitrary quantifier complexity, which blocks even partial cut elimination. A somewhat radical but very elegant solution is to prove the axioms in a system of infinite derivations. Specifically, we will consider derivations with the $\omega$-rule
\begin{equation*}
\AxiomC{$\Gamma,\varphi[x/\overline 0]\qquad\Gamma,\varphi[x/\overline 1]\qquad\cdots\qquad\Gamma,\varphi[x/\overline n]\qquad\cdots\qquad\text{(all $n\in\mathbb N$)}$}
\RightLabel{,}
\UnaryInfC{$\Delta,\Gamma,\forall x.\varphi$}
\DisplayProof
\end{equation*}
which allows to conclude $\forall x.\varphi$ when one has a derivation of $\varphi[x/\overline n]$ for each individual number~$n\in\mathbb N$. The $\omega$-rule goes back to David Hilbert~\cite{hilbert-zahlenlehre} (similar ideas of L.\,E.\,J.~Brouwer and Ernst Zermelo are mentioned in~\cite[Footnote~15]{rathjen-sieg-stanford}). Its systematic use in ordinal analysis is due to Kurt Sch\"utte~\cite{schuette-omega-rule}.  Of course, infinite derivations cannot be written down in practice. At the same time, they are perfectly decent mathematical objects, with a precise definition given below.

To define an infinite proof system that is suitable for our purpose, we need some preparation: First, we fix a measure for the complexity of formulas, which will be used to control cut inferences:

\begin{definition}
The rank $\rk(\varphi)\in\mathbb N$ of an $\mathcal L_{\mathsf{PA}}^X$-formula $\varphi$ is recursively given by
\begin{gather*}
\rk(\varphi)=0\quad\text{for any literal~$\varphi$},\\
\rk(\varphi_0\land\varphi_1)=\rk(\varphi_0\lor\varphi_1)=\max\{\rk(\varphi_0),\rk(\varphi_1)\}+1,\\
\rk(\forall x.\varphi)=\rk(\exists x.\varphi)=\rk(\varphi)+1.
\end{gather*}
\end{definition}

A straightforward induction over formulas yields $\rk(\neg\varphi)=\rk(\varphi)=\rk(\varphi[x/t])$. Our derivations will be infinite but should still be well founded. As a second preparation, we fix a well order $(E,\prec)$ that will help to ensure this in a controlled way. The following assumption lists all properties that are needed in the present section; further requirements will be added later. We will eventually discharge all assumptions, by showing that they are satisfied for the order \mbox{$E=\varepsilon_0=\min\{\alpha\,|\,\omega^\alpha=\alpha\}$} (see Section~\ref{sect:eps_0} for explanations). If one wants to satisfy the following assumptions only, one can take the simpler order $E=\omega\cdot 2$ (one copy of~$\mathbb N\cong\omega$ above another).

\begin{assumption}\label{ass:E}
Throughout the following, we assume that $(E,\prec)$ is a well order with the following properties: First, there is a map $E\ni\alpha\mapsto\alpha+1\in E$ with $\alpha\prec\alpha+1$ for all~$\alpha\in E$. Secondly, there are elements $0,\omega\in E$ with $0\prec\omega$ such that $\alpha\prec\omega$ entails $\alpha+1\prec\omega$.
\end{assumption}

We will later add the assumption that~$0$ is the minimal element of~$E$. For the time begin, the only role of~$0$ is to ensure that~$\omega$ is not the minimal element. As a final preparation, we fix an $\mathcal L_{\pa}$-formula $x\tl y$ (with no further free variables) that defines a well order on~$\mathbb N$. In the following we assume that $\mathcal L_{\pa}^X$-formulas are closed, except when noted otherwise. This makes sense in view of the $\omega$-rule, which introduces a universal quantifier without the use of free variables. Note that the predicate $X$ may occur in closed formulas. We now present our proof system:

\begin{definition}\label{def:inf-proofs}
By recursion on~$\alpha\in E$, we declare that $\sststile{d}{\alpha}\Gamma$ holds precisely in the following cases (for $d\in\mathbb N$ and a sequent~$\Gamma$ that consists of $\mathcal L_{\pa}^X$-sentences):
\begin{enumerate}[label=(\roman*)]
\item $\Gamma$ contains a true literal from~$\mathcal L_{\pa}$ or two formulas $Xs$ and $\neg Xt$ for terms $s$ and $t$ with the same value.
\item $\Gamma$ contains a formula $\varphi_0\land\varphi_1$ ($\varphi_0\lor\varphi_1$) and we have $\sststile{d(i)}{\alpha(i)}\Delta_i$ with $\alpha(i)\prec\alpha$, $d(i)\leq d$ and $\Delta_i\subseteq\Gamma,\varphi_i$, for every (some) $i\in\{0,1\}$.
\item $\Gamma$ contains a formula $\forall x.\varphi$ ($\exists x.\varphi$) and we have $\sststile{d(t)}{\alpha(t)}\Delta_t$ with $\alpha(t)\prec\alpha$, $d(t)\leq d$ and $\Delta_t\subseteq\Gamma,\varphi[x/t]$, for every (some) closed term $t$.
\item $\Gamma$ contains a formula $Xt$ and we have $\sststile{d(s)}{\alpha(s)}\Delta_s$ with $\alpha(s)\prec\alpha,d(s)\leq d$ and $\Delta_s\subseteq\Gamma,Xs$, for every closed term $s$ with $s\tl t$.
\item For some $\mathcal L_{\pa}^X$-sentence $\varphi$ with $\rk(\varphi)<d$ we have $\sststile{d(i)}{\alpha(i)}\Delta_i$ for $i\in\{0,1\}$, with $\alpha(i)\prec\alpha$ and $d(i)\leq d$ as well as $\Delta_0\subseteq\Gamma,\varphi$ and $\Delta_1\subseteq\Gamma,\neg\varphi$.
\end{enumerate}
\end{definition}

Given $\alpha\prec\beta,d\leq e$ and $\Gamma\subseteq\Gamma'$, it is immediate that $\sststile{d}{\alpha}\Gamma$ entails $\sststile{e}{\beta}\Gamma'$. As before, we will refer to this fact as~\emph{weakening}. We note that weakening could also be ensured in a more `economic' way: For example, if~$d$ can be chosen arbitrary in~(i), it is not strictly necessary to increase it in the remaining clauses. In contrast, it is important that $\alpha(s)$ may depend on~$s$ in~(iv), as infinitely many $\alpha(s)\prec\alpha$ need not admit a common bound $\alpha'$ with $\alpha(s)\preceq\alpha'\prec\alpha$.

Clauses~(i-iii) and~(v) of Definition~\ref{def:inf-proofs} correspond to axioms, inferences for connectives and quantifiers, and the cut rule. Clause~(iv) amounts to the~\emph{progression rule} of Sch\"utte, which implements induction along~$\tl$ in a hard-wired way. At least intuitively, $\sststile{d}{\alpha}\Gamma$ holds if $\Gamma$ has an infinite derivation tree with height (sometimes called rank) bounded by~$\alpha$, where all cut formulas have rank below~$d$. In particular, $d=0$ means that there are no cuts.

Both~(iv) and the universal case of~(iii) are versions of the $\omega$-rule. Note that~(iii) has premises $\varphi[x/t]$ for all terms~$t$, not just for numerals~$t=\overline n$. We will show that a version for numerals can be deduced, based on part~(a) of the following exercise.

\begin{exercise}\label{exer:inf-basic}
(a) Show that $\sststile{d}{\alpha}\Gamma$ entails $\sststile{d}{\alpha}\Gamma\backslash\{\varphi[x/s]\},\varphi[x/t]$ whenever $s$ and $t$ are closed terms with the same value.

(b) For a true $\mathcal L_{\pa}$-sentence~$\varphi$, show that we have $\sststile{0}{\rk(\varphi)}\varphi$. Also show that we have $\sststile{0}{2\cdot\rk(\varphi)}\varphi,\neg\varphi$ for any $\mathcal L_{\pa}^X$-sentence~$\varphi$ (cf.~the hint in part~(c) of Exercise~\ref{exer:sequent-basic}). Here $n\in E$ is explained by $\alpha+0:=\alpha$, $\alpha+(n+1):=(\alpha+n)+1$ and $n:=0+n$.
\end{exercise}

In some sense, part~(b) of the exercise shows that the unprovability of (true) $\mathcal L_{\mathsf{PA}}$-sentences in Peano arithmetic cannot be established via the given system of infinite proofs. At the same time, we will use this system to show that a certain~$\mathcal L_{\mathsf{PA}}^X$-sentence is unprovable in~$\mathsf{PA}[X]$. The point is that the additional predicate symbol~$X$ does not have a fixed interpretation, which makes it possible to represent universal quantification over subsets of~$\mathbb N$. However, this observation should be taken with care: it is possible to obtain meaningful results on~$\mathcal L_{\textsf{PA}}$-sentences as well, either by formalizing our infinite proof system in~$\mathsf{PA}$ itself (as in~\cite{buchholz91}), or via a modified proof system that controls numerical information explicitly (as in~\cite{wainer-fairtlough-98}). As promised above, we now deduce a more convenient version of the $\omega$-rule:

\begin{lemma}\label{lem:omega-rule}
Assuming $\forall x.\varphi\in\Gamma$, we can conclude $\sststile{d}{\alpha}\Gamma$ if we have $\sststile{d(n)}{\alpha(n)}\Delta_n$ with $\alpha(n)\prec\alpha,d(n)\leq d$ and $\Delta_n\subseteq\Gamma,\varphi[x/\overline{n}]$ for all~$n\in\mathbb N$.
\end{lemma}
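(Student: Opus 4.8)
The plan is to reduce the numeral-indexed premises that are given to us to the term-indexed premises that clause~(iii) of Definition~\ref{def:inf-proofs} requires, and then to apply that clause directly. The key observation is that every closed $\mathcal L_{\pa}^X$-term is in fact an $\mathcal L_{\pa}$-term (the symbol $X$ is a relation symbol, not a function symbol), so it evaluates to a unique natural number $n$ under the standard interpretation. Hence $t$ and the numeral $\overline n$ are closed terms with the same value, which is exactly the situation covered by Exercise~\ref{exer:inf-basic}(a).

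First I would fix an arbitrary closed term $t$ and let $n\in\mathbb N$ be its value. By hypothesis we have $\sststile{d(n)}{\alpha(n)}\Delta_n$ with $\alpha(n)\prec\alpha$, $d(n)\leq d$ and $\Delta_n\subseteq\Gamma,\varphi[x/\overline n]$. Since $\overline n$ and $t$ have the same value, Exercise~\ref{exer:inf-basic}(a), applied with $s=\overline n$, yields $\sststile{d(n)}{\alpha(n)}\Delta_n\backslash\{\varphi[x/\overline n]\},\varphi[x/t]$. Writing $\Delta_t$ for the sequent $\Delta_n\backslash\{\varphi[x/\overline n]\},\varphi[x/t]$ and setting $\alpha(t):=\alpha(n)$ and $d(t):=d(n)$, I obtain a derivation $\sststile{d(t)}{\alpha(t)}\Delta_t$ with $\alpha(t)\prec\alpha$ and $d(t)\leq d$.

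Next I would check the inclusion $\Delta_t\subseteq\Gamma,\varphi[x/t]$. From $\Delta_n\subseteq\Gamma,\varphi[x/\overline n]$ we get $\Delta_n\backslash\{\varphi[x/\overline n]\}\subseteq\Gamma$, so that adding $\varphi[x/t]$ gives $\Delta_t\subseteq\Gamma,\varphi[x/t]$, as needed. Having produced, for \emph{every} closed term $t$, a derivation $\sststile{d(t)}{\alpha(t)}\Delta_t$ with $\alpha(t)\prec\alpha$, $d(t)\leq d$ and $\Delta_t\subseteq\Gamma,\varphi[x/t]$, and recalling the assumption $\forall x.\varphi\in\Gamma$, I would apply the universal case of clause~(iii) of Definition~\ref{def:inf-proofs} to conclude $\sststile{d}{\alpha}\Gamma$.

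I expect no genuine obstacle here: the argument is essentially a bookkeeping step that converts the numeral premises into the term premises demanded by the definition. The only point requiring any care is the invariance-under-value property supplied by Exercise~\ref{exer:inf-basic}(a), together with the remark that closed terms always have a numerical value; beyond that, everything is the straightforward set-theoretic inclusion above and the fact that the ordinal and cut-rank bounds are inherited unchanged.
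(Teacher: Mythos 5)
Your proposal is correct and follows essentially the same route as the paper's own proof: for each closed term $t$ with value $n$, set $\alpha(t):=\alpha(n)$, $d(t):=d(n)$ and $\Delta_t:=\Delta_n\backslash\{\varphi[x/\overline n]\},\varphi[x/t]$, invoke Exercise~\ref{exer:inf-basic}(a) to transfer the derivation, and conclude by clause~(iii) of Definition~\ref{def:inf-proofs}. Your additional remark that closed $\mathcal L_{\pa}^X$-terms are just $\mathcal L_{\pa}$-terms (since $X$ is a relation symbol) and therefore have numerical values is a point the paper leaves implicit, but it is exactly the justification needed.
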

\begin{proof}
To conclude by clause~(iii) of Definition~\ref{def:inf-proofs}, we need to find suitable~$\alpha(t),d(t)$ and $\Delta_t$ for any given term~$t$. The latter has some numerical value~$n\in\mathbb N$. We put $\alpha(t):=\alpha(n), d(t):=d(n)$ and $\Delta_t:=\Delta_n\backslash\{\varphi[x/\overline n]\},\varphi[x/t]\subseteq\Gamma,\varphi[x/t]$. It just remains to observe that $\sststile{d(t)}{\alpha(t)}\Delta_t$ holds by part~(a) of the previous exercise.
\end{proof}

The following is needed to establish independence via infinite proofs:

\begin{theorem}[`Embedding']\label{thm:embedding}
If an $\mathcal L_{\mathsf{PA}}^X$-sentence~$\varphi$ is a theorem of~$\mathsf{PA}[X]$, then we have $\sststile{d}{\alpha}\varphi$ for some $\alpha\in E$ and $d\in\mathbb N$.
\end{theorem}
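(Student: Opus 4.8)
The plan is to proceed in the familiar three stages of an embedding argument: reduce the hypothesis to a single purely logical derivation over finitely many axioms, embed that finite derivation into the infinite calculus, supply infinite derivations of the axioms themselves, and finally remove the axioms by cuts. For the first stage I would use completeness and compactness. Since $\varphi$ is a theorem of $\mathsf{PA}[X]$, it is a logical consequence of finitely many axioms; taking universal closures yields sentences $\chi_1,\dots,\chi_n$ with $\{\chi_1,\dots,\chi_n\}\vDash\varphi$, so that $\bigvee(\neg\chi_1,\dots,\neg\chi_n,\varphi)$ is logically valid. By Theorem~\ref{thm:PL-complete} there is then a cut-free predicate-logic derivation of the sequent $\neg\chi_1,\dots,\neg\chi_n,\varphi$.

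The second stage is the logical embedding. I would prove, by induction on a cut-free predicate-logic derivation of a sequent $\Gamma$ (whose formulas may still contain free variables), that for every substitution $\sigma$ of all free variables by closed terms one has $\sststile{0}{m}\Gamma^\sigma$ for some finite $m\prec\omega$. The propositional and existential rules translate directly into clauses~(ii) and~(iii), raising the height by one each time. The only subtle rule is the $\forall$-rule with eigenvariable $y$: here the induction hypothesis, applied to $\sigma[y:=t]$ for each closed term $t$, yields a family of premises of a height $m'$ uniform in $t$ (the underlying finite tree is fixed), so clause~(iii) applies and produces finite height $m'+1\prec\omega$ by Assumption~\ref{ass:E}. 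Applied to the sequent from the first stage, this gives $\sststile{0}{m}\neg\chi_1,\dots,\neg\chi_n,\varphi$ for some $m\prec\omega$.

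The third stage, where the real work lies, is to derive each axiom $\chi_i$ in the infinite system without cuts and with bounded height. The equality axioms and the axioms of Robinson arithmetic are either true closed $\mathcal L_{\mathsf{PA}}$-sentences, handled by Exercise~\ref{exer:inf-basic}(b), or the congruence axiom for $X$: after stripping the leading quantifiers by $\omega$-rules, the matrix $\neg(s=t),\neg Xs,Xt$ is an axiom via clause~(i) (through the true literal $\neg(s=t)$ when $s,t$ differ in value, and through the second option of clause~(i) when they agree). The induction axioms are the heart of the matter. Writing the negation normal form of $\mathcal I\varphi$ as the disjunction with disjuncts $\neg\varphi[x/0]$, $\exists x(\varphi\land\neg\varphi[x/Sx])$ and $\forall x.\varphi$, I would first show by an ordinary induction on $n\in\mathbb N$ that
\[
\sststile{0}{\alpha_n}\ \neg\varphi[x/0],\ \exists x(\varphi\land\neg\varphi[x/Sx]),\ \varphi[x/\overline n],
\]
with $\alpha_n\prec\omega$ growing linearly in $n$. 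The base case is $\varphi[x/0],\neg\varphi[x/0]$ from Exercise~\ref{exer:inf-basic}(b). In the step I combine the induction hypothesis (which supplies $\varphi[x/\overline n]$) with the tautology $\varphi[x/S\overline n],\neg\varphi[x/S\overline n]$ through a conjunction inference to obtain the instance $\varphi[x/\overline n]\land\neg\varphi[x/S\overline n]$ in context, and then introduce the existential disjunct with witness $\overline n$, arriving at the same sequent with $\varphi[x/\overline{n+1}]$. Since every $\alpha_n\prec\omega$, Lemma~\ref{lem:omega-rule} closes all these premises into $\forall x.\varphi$ at height $\omega$, two disjunction inferences assemble $\mathcal I\varphi$ itself, and any free parameters are quantified away by further $\omega$-rules, keeping the height below $\omega\cdot2$.

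Finally I would splice the pieces together. Setting $d:=\max_i\rk(\chi_i)+1$, I cut the logical derivation $\sststile{0}{m}\neg\chi_1,\dots,\neg\chi_n,\varphi$ successively against the axiom derivations $\sststile{0}{\gamma_i}\chi_i$ via clause~(v)—legitimate since $\rk(\chi_i)<d$—eliminating the literals $\neg\chi_i$ one by one and ending with $\sststile{d}{\alpha}\varphi$ for some $\alpha\in E$. The main obstacle is the induction-axiom derivation: it is exactly the point where object-level induction is re-expressed as a metatheoretic induction feeding the $\omega$-rule, and one must verify that the linear growth of the $\alpha_n$ stays below $\omega$ so that Lemma~\ref{lem:omega-rule} applies with bound $\omega$.
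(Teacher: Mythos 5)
Your proposal is correct and takes essentially the same route as the paper's own proof: completeness yields a cut-free finitary derivation of $\neg\chi_1,\ldots,\neg\chi_n,\varphi$, a translation lemma (induction on the finite derivation, with closed substitutions, heights uniformly below $\omega$, and the eigenvariable case discharged by the $\omega$-rule) embeds it into the infinite calculus, the axioms are derived cut-free exactly as in the paper (true $\mathcal L_{\mathsf{PA}}$-sentences via Exercise~\ref{exer:inf-basic}(b), the $X$-congruence axiom via clause~(i), and induction by a meta-induction on $n$ feeding Lemma~\ref{lem:omega-rule}), and the pieces are joined by cuts of rank bounded by $d=\max_i\rk(\chi_i)+1$. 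The only discrepancies are cosmetic, e.g.\ your count of two disjunction inferences when assembling $\mathcal I\varphi$ (set-based sequents require a couple of extra absorption steps, whence the paper's height $\omega+4$), which does not affect the argument.
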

\begin{proof}
Assume that $\varphi$ follows from the $\mathsf{PA}[X]$-axioms $\psi_1,\ldots,\psi_n$. By Theorem~\ref{thm:PL-complete} (completeness), we get a cut-free derivation $\vdash\neg\psi_1,\ldots,\neg\psi_n,\varphi$ in the finitary sequent calculus from Definition~\ref{def:seq-calc}. We will show $\sststile{0}{\alpha(i)}\psi_i$ and $\sststile{0}{\alpha(0)}\neg\psi_1,\ldots,\neg\psi_n,\varphi$ for suitable $\alpha(i)$. Once this is achieved, we get $\sststile{d}{\alpha+n}\varphi$ by $n$~cuts (clause~(v) from Definition~\ref{def:inf-proofs}), for $\alpha:=\max_{\prec}\{\alpha(i)\,|\,i\leq n\}$ and $d:=\max\{\rk(\psi_i)+1\,|\,1\leq i\leq n\}$. Let us first provide derivations for the axioms: Those that do not involve~$X$ are covered by part~(b) of the exercise above. Concerning the equality axiom for~$X$, note that Definition~\ref{def:inf-proofs}(i) yields $\sststile{0}{0}\neg s=t,\neg Xs,Xt$ for any terms~$s,t$ (either $\neg s=t$ is a true literal or~$s$ and $t$ have the same value). We can now derive
\begin{equation*}
\AxiomC{$\sststile{0}{0}\neg s=t,\neg Xs,Xt$}
\UnaryInfC{$\sststile{0}{1}\neg s=t\lor\neg Xs,\neg Xs,Xt$}
\UnaryInfC{$\sststile{0}{2}\neg s=t\lor\neg Xs,Xt$}
\UnaryInfC{$\qquad\quad\cdots\quad\sststile{0}{4}(\neg s=t\lor\neg Xs)\lor Xt\quad\cdots\quad\text{(all~$s,t$)}$}
\RightLabel{.}
\UnaryInfC{$\sststile{0}{6}\forall x\forall y(x=y\land Xx\to Xy)$}
\DisplayProof
\end{equation*}
For the last step, recall that we treat implication as meta-notation (cf.~Example~\ref{ex:drinker}). To establish induction, recall that Exercise~\ref{exer:inf-basic}(b) yields $\sststile{0}{2\cdot\rk(\psi)}\psi[x/\overline n],\neg\psi[x/\overline n]$ for all $n\in N$. By induction (in the meta theory) we get derivations
\begin{equation*}
\AxiomC{$\vdots$}
\noLine
\UnaryInfC{$\sststile{0}{2\cdot(\rk(\psi)+n)}\neg\psi[x/0],\exists x(\psi\land\neg\psi[x/Sx]),\psi[x/\overline n]$}
\AxiomC{$\sststile{0}{2\cdot\rk(\psi)}\neg\psi[x/S\overline n],\psi[x/\overline{n+1}]$}
\BinaryInfC{$\sststile{0}{2\cdot(\rk(\psi)+n)+1}\neg\psi[x/0],\exists x(\psi\land\neg\psi[x/Sx]),\psi[x/\overline n]\land\neg\psi[x/S\overline n],\psi[x/\overline{n+1}]$}
\RightLabel{.}
\UnaryInfC{$\sststile{0}{2\cdot(\rk(\psi)+n+1)}\neg\psi[x/0],\exists x(\psi\land\neg\psi[x/Sx]),\psi[x/\overline{n+1}]$}
\DisplayProof
\end{equation*}
Using Lemma~\ref{lem:omega-rule} ($\omega$-rule), we can derive the induction axiom as
\begin{equation*}
\AxiomC{$\cdots\qquad\sststile{0}{2\cdot(\rk(\psi)+n)}\neg\psi[x/0],\exists x(\psi\land\neg\psi[x/Sx]),\psi[x/\overline{n}]\qquad\cdots$}
\UnaryInfC{$\sststile{0}{\omega}\neg\psi[x/0],\exists x(\psi\land\neg\psi[x/Sx]),\forall x.\psi$}
\RightLabel{.}
\UnaryInfC{$\sststile{0}{\omega+4}\psi[x/0]\land\forall x(\psi\to\psi[x/Sx])\to\forall x.\psi$}
\DisplayProof
\end{equation*}
It remains to translate finite derivations into infinite ones. Consider a sequent~$\Gamma$ of $\mathcal L_{\mathsf{PA}}^X$-formulas, possibly with free variables. Any sequent that arises from $\Gamma$ when we substitute closed terms for all free variables is called a closed instance of~$\Gamma$. By induction over a derivation $\vdash\Gamma$ (in the sense of Definition~\ref{def:seq-calc}) with cut formulas of rank below~$d$, we find an $\alpha\prec\omega$ such that $\sststile{d}{\alpha}\Gamma'$ (in the sense of Definition~\ref{def:inf-proofs}) holds for any closed instance~$\Gamma'$ of~$\Gamma$. We discuss one case and leave all others to the reader: Assume that $\Delta,\Gamma,\forall x.\varphi$ was deduced from $\Gamma,\varphi[x/y]$, where $y$ is not among the variables~$x,\mathbf z$ that are free in $\Gamma,\forall x.\varphi$. The induction hypothesis yields an~$\alpha\prec\omega$ with $\sststile{d}{\alpha}\Gamma[x/r,\mathbf z/\mathbf t],\varphi[x/y][y/s,\mathbf z/\mathbf t]$ for all closed terms~$r,s$ and $\mathbf t$. Due to
\begin{equation*}
(\forall x.\varphi)[x/r,\mathbf z/\mathbf t]\,\equiv\,\forall x(\varphi[\mathbf z/\mathbf t])\quad\text{and}\quad\varphi[\mathbf z/\mathbf t][x/s]\,\equiv\,\varphi[x/y][y/s,\mathbf z/\mathbf t],
\end{equation*}
the $\omega$-rule yields $\sststile{d}{\alpha+1}\Delta',(\Gamma,\forall x.\varphi)[x/r,\mathbf z/\mathbf t]$, for any closed instance~$\Delta'$ of~$\Delta$.
\end{proof}

Our overall goal is to show that Peano arithmetic cannot prove Kruskal's theorem for binary trees. As we will see, the latter is closely related to `transfinite' induction along a certain well order. In the rest of this section, we show that transfinite induction cannot be proved by cut free derivations of `small' infinite height. To express transfinite induction, we abbreviate $\forall y\tl x.\varphi\equiv\forall y(y\tl x\to\varphi)$ and
\begin{align*}
\prog_{\tl}\,&:\equiv\,\forall x(\forall y\tl x.Xy\to Xx),\\
\ti_{\tl}\,&:\equiv\,\prog_{\tl}\to\forall x.Xx.
\end{align*}
The reader may wish to recover the usual connection between transfinite induction and well foundedness in our setting:

\begin{exercise}\label{ex:ti-wf}
Let $\mathcal N$ be the standard model of~$\mathcal L_{\pa}$ (in which all symbols receive their usual interpretation over~$\mathbb N$). By a standard model of $\mathcal L_{\pa}^X$ we mean any extension of~$\mathcal N$ to that language (i.\,e., the predicate symbol~$X$ can be interpreted by any set of natural numbers). According to a standing assumption, $x\tl y$ is an $\mathcal L_{\pa}$-formula that defines a well order~$\tl$ on~$\mathbb N$. For the present exercise, we drop the assumption that~$\tl$ is well founded (but it should still be a linear order). Show that the following are equivalent:
\begin{enumerate}[label=(\roman*)]
\item The sentence $\ti_{\tl}$ holds in all standard models of $\mathcal L_{\pa}^X$.
\item Any non-empty subset of $\mathbb N$ has a $\tl$-minimal element.
\end{enumerate}
Also recall that (ii) is equivalent to the other common formulation of well foundedness: there is no infinite sequence $n_0,n_1,\ldots\subseteq\mathbb N$ with $n_{i+1}\tl n_i$ for all~$i\in\mathbb N$.
\end{exercise}

In our infinitary proof system we can show that $\tl$ is \emph{progressive}:

\begin{lemma}\label{lem:prove-prog}
We have $\sststile{0}{\alpha}\prog_{\tl}$ for some $\alpha\prec\omega$.
\end{lemma}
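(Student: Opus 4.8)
The plan is to construct directly a cut-free infinite derivation of $\prog_{\tl}$ whose height is a fixed finite element of $E$; since no cut (clause~(v)) will be used, the cut rank stays $0$ throughout. First I would unfold the abbreviation, using that implication is meta-notation and that negation normal form is computed by the de~Morgan clauses of Definition~\ref{def:NNF}. Writing
\[
\psi(x)\;:\equiv\;\exists y(y\tl x\land\neg Xy)\lor Xx,
\]
one checks directly that $\prog_{\tl}$ is literally the sentence $\forall x.\psi(x)$. The outermost inference is then an application of the $\omega$-rule, i.e. the universal case of clause~(iii) of Definition~\ref{def:inf-proofs}: it reduces deriving $\prog_{\tl}$ to deriving the closed instance $\psi(t)$ for every closed term $t$ (with $\prog_{\tl}$ retained as a harmless side formula by weakening).

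Second, for a fixed $t$ I would peel off the top-level disjunction of $\psi(t)$ in two stages, exploiting that the disjunction rule of clause~(ii) keeps the principal formula $\psi(t)$ in the sequent. One application selecting the right disjunct brings $Xt$ into the sequent, and I then apply the progression rule (clause~(iv)) to this occurrence of $Xt$. This reduces matters to deriving, for each closed term $s$ with $s\tl t$, the sequent $\psi(t),Xt,Xs$. To close such a sequent I would decompose $\psi(t)$ again, now selecting the left disjunct $\exists y(y\tl t\land\neg Xy)$, instantiate the existential at the witness $s$ (existential case of clause~(iii)), and finally split the resulting conjunction $(s\tl t)\land\neg Xs$ by the conjunction case of clause~(ii). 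Its two premises are both immediate: $s\tl t$ is a true $\mathcal L_{\pa}$-sentence, so $\sststile{0}{\rk(s\tl t)}s\tl t$ by Exercise~\ref{exer:inf-basic}(b), whereas $Xs$ together with $\neg Xs$ forms an axiom by clause~(i). Here it matters that $\rk(s\tl t)=\rk(x\tl y)$ is a single finite number independent of $s$ and $t$.

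The only subtle point is the bookkeeping of heights across the progression step. Reading the construction from the leaves upward, every leaf is derived at height at most the fixed finite number $\rk(x\tl y)$, and the finitely many intervening rule applications for a given $s$ add only a fixed finite amount, so each premise $\psi(t),Xt,Xs$ of the progression inference is obtained at one and the same finite height, uniformly in $s$. This is precisely what makes progressivity provable with small height: although clause~(iv) ranges over the infinitely many $s\tl t$ and, as the remark after Definition~\ref{def:inf-proofs} warns, the heights $\alpha(s)$ in general need not admit a common bound below $\alpha$, here they are all equal, so a single successor step suffices. Propagating these finite bounds back through the two disjunction steps, the progression step, the conjunction and existential steps, and the outermost $\omega$-rule---each contributing one more unit, uniformly in $t$---yields $\sststile{0}{\alpha}\prog_{\tl}$ for a fixed $\alpha\prec\omega$ (concretely $\alpha=\rk(x\tl y)+6$ with the bookkeeping above). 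That such a finite $\alpha$ indeed satisfies $\alpha\prec\omega$ is guaranteed by Assumption~\ref{ass:E}, which completes the argument.
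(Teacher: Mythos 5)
Your proof is correct and follows essentially the same route as the paper's: a direct cut-free derivation built from Exercise~\ref{exer:inf-basic}(b) for the true instances $s\tl t$, an axiom for $Xs,\neg Xs$, then conjunction, existential, progression, disjunction, and the $\omega$-rule, with all heights uniformly finite. The only difference is bookkeeping: you keep the principal disjunction as a side formula and introduce its two disjuncts in separate steps (arriving at $\rk(x\tl y)+6$), which is in fact slightly more scrupulous about clause~(ii) than the paper's display, where the step from $\exists y(y\tl t\land\neg Xy),Xt$ to $\exists y(y\tl t\land\neg Xy)\lor Xt$ tacitly merges two disjunction inferences.
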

\begin{proof}
For $\alpha=\rk(x\tl y)$ we have $\sststile{0}{\alpha}s\tl t$ whenever $s\tl t$ is true, by part~(b) of Exercise~\ref{exer:inf-basic}. Thus the desired derivation can be given as
\begin{equation*}
\AxiomC{$\sststile{0}{\alpha}s\tl t$}
\AxiomC{$\sststile{0}{0}\neg Xs,Xs$}
\BinaryInfC{$\sststile{0}{\alpha+1}s\tl t\land\neg Xs,Xs$}
\UnaryInfC{$\qquad\qquad\quad\cdots\quad\sststile{0}{\alpha+2}\exists y(y\tl t\land\neg Xy),Xs\quad\cdots\quad\text{(all $s\tl t$)}$}
\UnaryInfC{$\sststile{0}{\alpha+3}\exists y(y\tl t\land\neg Xy),Xt$}
\UnaryInfC{$\qquad\quad\cdots\quad\sststile{0}{\alpha+5}\exists y(y\tl t\land\neg Xy)\lor Xt\quad\cdots\quad\text{(all $t$)}$}
\RightLabel{.}
\UnaryInfC{$\sststile{0}{\alpha+6}\prog_{\tl}$}
\DisplayProof
\end{equation*}
Note that the third and last inference are justified by the progression and~$\omega$-rule (clauses (iv) and~(iii) of Definition~\ref{def:inf-proofs}).
\end{proof}

To connect transfinite induction and the height of cut free proofs, we describe an `attempted' embedding of $(\mathbb N,\tl)$ into $(E,\prec)$: If $o(m)\in E$ is defined for all $m\tl n$, we recursively set
\begin{equation*}
o(n):=\begin{cases}
\min_\prec\{\alpha\in E\,:\,o(m)\prec\alpha\text{ for all }m\tl n\} & \text{if such an $\alpha$ exists},\\
\text{undefined} & \text{otherwise}.
\end{cases}
\end{equation*}
If $o(m)$ is undefined for some $m\tl n$, then $o(n)$ is undefined as well. Note that the recursion is justified since~$\tl$ is a well order; the minimum above exists since the same holds for~$\prec$. The following is straightforward but crucial (in writing $o(t)$ we identify the term~$t$ with its value):

\begin{lemma}\label{lem:ti-cut-free}
If we have $\sststile{0}{\alpha}Xt$, then $o(t)\preceq\alpha$ is defined.
\end{lemma}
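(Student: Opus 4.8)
The plan is to prove the lemma by transfinite induction on $\alpha\in E$ (along $\prec$), but to carry a slightly stronger statement through the induction. Namely, I would show: \emph{for every $\alpha\in E$ and every nonempty sequent $\Gamma$ all of whose formulas are positive $X$-literals $Xt_1,\dots,Xt_k$, if $\sststile{0}{\alpha}\Gamma$ then $o(t_i)\preceq\alpha$ is defined for at least one index~$i$.} The lemma is then the special case $\Gamma=\{Xt\}$, where the only available witness is $t$ itself, so that we obtain exactly $o(t)\preceq\alpha$. The generalisation is forced upon us because the premises of the rule we are about to use reintroduce the side formulas, so a statement about singletons alone would not close under the induction.

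The first step is to pin down which inference can stand at the root of $\sststile{0}{\alpha}\Gamma$. Since every formula in $\Gamma$ is a positive $X$-literal, $\Gamma$ contains no true $\mathcal L_{\pa}$-literal and no negated $X$-literal, so clause~(i) of Definition~\ref{def:inf-proofs} cannot apply; it contains no compound or quantified formula, so clauses~(ii) and~(iii) cannot apply; and because $d=0$ there is no $\varphi$ with $\rk(\varphi)<d$, so the cut clause~(v) cannot apply. Hence the last inference must be the progression rule, clause~(iv), applied to some $Xt_j\in\Gamma$. This yields, for every closed term $s$ with $s\tl t_j$, a derivation $\sststile{0}{\alpha(s)}\Delta_s$ with $\alpha(s)\prec\alpha$ and $\Delta_s\subseteq\Gamma,Xs$. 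In particular each $\Delta_s$ is again a sequent of positive $X$-literals, with terms drawn from $\{t_1,\dots,t_k,s\}$, and it is nonempty because the empty sequent is not derivable in our system (no clause produces it).

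Next I would apply the induction hypothesis to each premise $\sststile{0}{\alpha(s)}\Delta_s$, obtaining for every $s\tl t_j$ a witnessing value $v_s$ among the values occurring in $\Delta_s$ with $o(v_s)\preceq\alpha(s)$ defined. The decisive case distinction is whether any such witness is an ``old'' side formula. If $v_s$ equals the value of some $Xt_i\in\Gamma$, then $o(t_i)\preceq\alpha(s)\prec\alpha$ is defined, and we are done with witness $t_i$. Otherwise, for every $s\tl t_j$ the witness can only be $s$ itself, so $o(s)\preceq\alpha(s)\prec\alpha$ is defined for all such $s$. Since every value $m\tl t_j$ is the value of some closed term below $t_j$ (e.g.\ the numeral $\overline m$), this shows $o(m)\prec\alpha$ for all $m\tl t_j$; hence $\alpha$ lies in the set $\{\beta\in E:o(m)\prec\beta\text{ for all }m\tl t_j\}$, its $\prec$-minimum exists, and by the definition of $o$ this minimum is $o(t_j)\preceq\alpha$. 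In either branch the generalised statement holds at $\alpha$.

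The main obstacle I anticipate is exactly the bookkeeping of side formulas that forces the stronger induction hypothesis, together with the dichotomy in the last step: one must notice that a premise can already ``solve'' the problem through a carried-along side formula $t_i$, and only when this never occurs does one genuinely reconstruct $o(t_j)$ from the $o$-values of its $\tl$-predecessors. The remaining work, namely verifying that progression is the only applicable rule, is routine but should be stated with care, since it is precisely this match between clause~(iv) and the recursive clause defining $o$ that ties the height bound $\alpha$ to the value $o(t)$.
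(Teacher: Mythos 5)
Your proof is correct and takes essentially the same route as the paper's: the paper also strengthens the claim to sequents $Xt_0,\ldots,Xt_n$ of positive $X$-literals, argues by induction on $\alpha$ that at cut rank $0$ only the progression rule can apply, and concludes with the same dichotomy (either some premise is already witnessed by a carried-along side formula $t_j$, or $o(s)\prec\alpha$ is defined for all $s\tl t_i$, so that $o(t_i)\preceq\alpha$ is defined by the recursive definition of~$o$). Your explicit verification that clauses (i)--(iii) and (v) cannot apply, and that the empty sequent is underivable, merely spells out details the paper leaves implicit.
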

\begin{proof}
Let us establish a somewhat more general claim by induction on~$\alpha$: if we have \mbox{$\sststile{0}{\alpha}Xt_0,\ldots,Xt_n$}, then $o(t_i)\preceq\alpha$ is defined for some~$i\leq n$. Without cuts, the given sequent can only be deduced by the progression rule: for some $i\leq n$ and all $s\tl t_i$ we must have $\sststile{0}{\alpha(s)}\Delta_s$ with $\alpha(s)\prec\alpha$ and $\Delta_s\subseteq Xt_0,\ldots,Xt_n,Xs$. Unless we already have~$o(t_j)\preceq\alpha(s)\prec\alpha$ for some $j\leq n$, the induction hypothesis tells us that $o(s)\prec\alpha$ is defined for all~$s\tl t_i$. But then $o(t_i)\preceq\alpha$ is defined.
\end{proof}

Let us abbreviate $E\!\restriction\!\alpha:=\{\gamma\in E\,|\,\gamma\prec\alpha\}$ for~$\alpha\in E$. The restriction of~$\prec$ to $E\!\restriction\!\alpha$ will also be denoted by~$\prec$. An embedding between well orders $(X,<_X)$ and $(Y,<_Y)$ is defined as a function~$f:X\to Y$ such that $x<_X y$ entails $f(x)<_Y f(y)$. We write $(X,<_X)\lesssim(Y,<_Y)$ if such an embedding exists (note that the notation $\lesssim$ is not standard). Part~(a) of the following exercise is needed to justify the last sentence of Theorem~\ref{thm:bound-TI} below. The other parts provide additional background.

\begin{exercise}
Prove the following:

(a) We have $(E,\prec)\not\lesssim(E\!\restriction\!\alpha,\prec)$ for any~$\alpha\in E$. \emph{Hint:} If~$f:E\to E\!\restriction\!\alpha$ was an embedding, we would get $\gamma\preceq f(\gamma)$ by induction on~$\gamma\in E$.

(b) For any embedding $f:X\to Y$ between well orders (or even linear orders), it is also the case that $f(x)<_Y f(x')$ entails $x<_X x'$.

(c) We have $(X,<_X)\lesssim(Y,<_Y)$ if and only if we have either $(X,<_X)\cong(Y,<_Y)$ or $(X,<_X)\cong(Y\!\restriction\!y,<_Y)$ for some~$y\in Y$.

(d) The relation $\lesssim$ induces a well order on isomorphism classes of well orders.
\end{exercise}

We conclude this section with a conditional independence result. The condition that cut elimination holds will be proved in the next section, under additional assumptions on the well order~$(E,\prec)$.

\begin{theorem}\label{thm:bound-TI}
Assume that our infinite proof system admits cut elimination, in the sense that $\sststile{d}{\alpha}\Gamma$ entails $\sststile{0}{f(\alpha,d)}\Gamma$, where $f(\alpha,d)\in E$ does not depend on~$\Gamma$. If $\ti_{\tl}$ is a theorem of $\mathsf{PA}[X]$, then we have $(\mathbb N,\tl)\lesssim(E\!\restriction\!\alpha,\prec)$ for some~$\alpha\in E$. Hence $\mathsf{PA}[X]$ cannot prove $\ti_{\tl}$ when we have $(E,\prec)\lesssim(\mathbb N,\tl)$.
\end{theorem}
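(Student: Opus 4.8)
The plan is to extract, from a hypothetical $\mathsf{PA}[X]$-proof of $\ti_\tl$, a cut-free derivation of $X\overline n$ whose height is bounded \emph{uniformly in $n$}, and then read off an embedding of $(\mathbb N,\tl)$ into an initial segment of $(E,\prec)$ via the map $o$. So suppose $\ti_\tl$ is a theorem of $\mathsf{PA}[X]$. By the Embedding theorem (Theorem~\ref{thm:embedding}) we obtain $\sststile{d}{\beta_0}\ti_\tl$ for some $\beta_0\in E$ and $d\in\mathbb N$, and the assumed cut elimination turns this into a cut-free derivation $\sststile{0}{\gamma}\ti_\tl$ with $\gamma:=f(\beta_0,d)$. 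On the other hand, Lemma~\ref{lem:prove-prog} gives $\sststile{0}{\alpha_0}\prog_\tl$ with $\alpha_0\prec\omega$. The heart of the argument is to combine these two derivations into a single derivation of $X\overline n$, for each numeral $\overline n$, with a height bound independent of $n$; Lemma~\ref{lem:ti-cut-free} will then pin down $o(n)$.

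First I would build, for each $n$, a short cut-free derivation of $X\overline n,\neg\ti_\tl$. Since implication is meta-notation, $\ti_\tl$ is the disjunction $\neg\prog_\tl\lor\forall x.Xx$, so its negation is $\neg\ti_\tl\equiv\prog_\tl\land\exists x.\neg Xx$. The sequent $X\overline n,\neg X\overline n$ is an axiom by Definition~\ref{def:inf-proofs}(i), and an existential inference (clause~(iii)) with witness $\overline n$ yields $\sststile{0}{1}X\overline n,\exists x.\neg Xx$. Weakening Lemma~\ref{lem:prove-prog} to $\sststile{0}{\alpha_0}X\overline n,\prog_\tl$ and applying the conjunction rule (clause~(ii)) then produces $\sststile{0}{\delta}X\overline n,\neg\ti_\tl$ for a fixed $\delta:=\max_{\prec}(\alpha_0,1)+1\prec\omega$ that does not depend on $n$ (only the witness term $\overline n$ varies, not the shape of the derivation). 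A single cut (clause~(v)) on the formula $\ti_\tl$, against the weakened derivation $\sststile{0}{\gamma}X\overline n,\ti_\tl$, now gives $\sststile{d'}{\beta}X\overline n$ with $\beta:=\max_{\prec}(\gamma,\delta)+1$ and $d':=\rk(\ti_\tl)+1$; again $\beta$ and $d'$ are independent of $n$.

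Now I would invoke cut elimination a second time to obtain $\sststile{0}{f(\beta,d')}X\overline n$ for every $n$, where $\alpha:=f(\beta,d')$ is the \emph{same} element of $E$ for all $n$ — this is exactly where the hypothesis that $f(\beta,d')$ does not depend on the sequent $\Gamma=\{X\overline n\}$ is used. Lemma~\ref{lem:ti-cut-free} then tells us that $o(n)\preceq\alpha$ is defined for every $n\in\mathbb N$. Since $o$ is order-preserving by its very definition (for $m\tl n$ the element $o(n)$ is a strict upper bound of $\{o(m)\}$, so $o(m)\prec o(n)$), the map $n\mapsto o(n)$ is an embedding of $(\mathbb N,\tl)$ into $(E\!\restriction\!(\alpha+1),\prec)$. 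This establishes the first assertion, with $\alpha+1$ witnessing the existential ``$\alpha$'' of the theorem.

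For the final sentence, suppose additionally that $(E,\prec)\lesssim(\mathbb N,\tl)$. If $\mathsf{PA}[X]$ proved $\ti_\tl$, then composing this embedding with the one just constructed (embeddings compose, so $\lesssim$ is transitive) would give $(E,\prec)\lesssim(E\!\restriction\!(\alpha+1),\prec)$, contradicting part~(a) of the preceding exercise. Hence $\mathsf{PA}[X]$ cannot prove $\ti_\tl$. The only delicate point throughout is the uniformity in $n$: every ordinal bound appearing before the final cut elimination must be chosen independently of the witness $\overline n$, and the decisive last step relies on $f$ being independent of $\Gamma$. Granting this, the remainder is routine bookkeeping with weakening and the rules of Definition~\ref{def:inf-proofs}.
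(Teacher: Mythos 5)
Your proof is correct and takes essentially the same route as the paper's: embed the $\mathsf{PA}[X]$-proof via Theorem~\ref{thm:embedding}, cut the resulting derivation of $\ti_{\tl}$ against a short derivation of $X\overline n,\neg\ti_{\tl}$ (built from Lemma~\ref{lem:prove-prog}, an axiom, and an existential inference) whose height is independent of~$n$, apply the assumed uniform cut elimination, and conclude via Lemma~\ref{lem:ti-cut-free} and the map~$o$, with the final sentence following from transitivity of $\lesssim$ and the exercise. The only (inessential) difference is that you invoke cut elimination twice; the first application to $\ti_{\tl}$ itself can be skipped by cutting directly against the derivation $\sststile{d}{\beta_0}\ti_{\tl}$ with cuts, after raising the cut rank to $\max(d,\rk(\ti_{\tl})+1)$, which is what the paper does.
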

\begin{proof}
Assume that $\ti_{\tl}$ is provable in~$\mathsf{PA}[X]$. By Theorem~\ref{thm:embedding} (embedding) we obtain $\sststile{d}{\alpha}\ti_{\tl}$ for some~$\alpha\in E$ and $d\in\mathbb N$. Increasing the latter if necessary, we can invoke Lemma~\ref{lem:prove-prog} to get $\sststile{0}{\alpha}\prog_{\tl}$ and then
\begin{equation*}
\AxiomC{$\sststile{d}{\alpha}\ti_{\tl}$}
\AxiomC{$\sststile{0}{\alpha}\prog_{\tl}$}
\AxiomC{$\sststile{0}{0}\neg X\overline n,X\overline n$}
\UnaryInfC{$\sststile{0}{1}\exists x.\neg Xx,X\overline n$}
\BinaryInfC{$\sststile{0}{\alpha+2}\prog_{\tl}\land\exists x.\neg Xx,X\overline n$}
\RightLabel{(cut),}
\BinaryInfC{$\sststile{d}{\alpha+3}X\overline n$}
\DisplayProof
\end{equation*}
where~$n\in\mathbb N$ is arbitrary. By the cut elimination result that is assumed in the theorem, we get $\sststile{0}{f(\alpha+3,d)}X\overline n$. Now Lemma~\ref{lem:ti-cut-free} entails that $o(n)\preceq f(\alpha+3,d)\in E$ is defined for all~$n\in\mathbb N$. Hence~$o$ witnesses $(\mathbb N,\tl)\lesssim(E\!\restriction\!f(\alpha+3,d)+1,\prec)$.
\end{proof}

In order to prove the theorem, we have assumed that the cuts in $\sststile{d}{\alpha+2}X\overline n$ can be eliminated independently of~$n$. This assumption will be justified, but it is not strictly necessary: To avoid it, derive $\sststile{d}{\alpha+2}\forall x.Xx$ by a similar argument as above. Then use a single application of cut elimination to infer $\sststile{0}{\beta}\forall x.Xx$ for some~$\beta\in E$. In the next section we will prove an `inversion' result, which allows to conclude $\sststile{0}{\beta}X\overline n$ for the same~$\beta$ as before (which is independent of~$n\in\mathbb N$). To summarize, our approach is less elementary because it requires cut elimination to be uniform. The advantage is that we avoid inversion, which we have not proved yet.

\section{Cut elimination for infinite derivations}

In this section, we prove cut elimination for the infinitary proof system introduced in Section~\ref{sect:inf-derivs}. In other words, we establish the assumption of Theorem~\ref{thm:bound-TI}, so that the latter becomes an unconditional independence result (except that it remains to specify suitable well orders $\tl$ and $\prec$). Both the general method of cut elimination~\cite{gentzen35} and its application in the ordinal analysis of Peano arithmetic~\cite{gentzen36,gentzen43} are due to Gerhard Gentzen. The formulation in terms of infinite derivations goes back to Kurt Sch\"utte~\cite{schuette-omega-rule}. 

The following notation (which Wilfried Buchholz~\cite{buchholz-local-predicativity} attributes to William Tait) will help to reduce the number of case that we need to consider:

\begin{definition}
To each $\mathcal L_{\pa}^X$-sentence $\varphi$ that is not of the form $Xt$ or $\neg Xt$ we assign a (possibly infinite) conjunction $\varphi\simeq\bigwedge_{t\in\iota(\varphi)}\varphi_t$ or disjunction $\varphi\simeq\bigvee_{t\in\iota(\varphi)}\varphi_t$. The conjunctive formulas are given as
\begin{gather*}
\varphi\simeq\text{``the empty conjunction"}\quad\text{when $\varphi$ is a true literal of~$\mathcal L_{\pa}$},\\
\varphi_0\land\varphi_1\simeq\textstyle\bigwedge_{t\in\{0,1\}}\varphi_t,\qquad
\forall x.\varphi\simeq\textstyle\bigwedge_{\text{$t$ a closed term}}\varphi[x/t].
\end{gather*}
We declare that the disjunctive formulas are the negations of the conjunctive ones. The associated disjunctions are given by $\neg\varphi\simeq\bigvee_{t\in\iota(\varphi)}\neg(\varphi_t)$ for $\varphi\simeq\bigwedge_{t\in\iota(\varphi)}\varphi_t$.
\end{definition}

It requires some practice to become familiar with the new notation:

\begin{exercise}
(a) Write out the disjunctive clauses $\varphi\simeq\bigvee_{t\in\iota(\varphi)}\varphi_t$ explicitly.

(b) Show that $\varphi\simeq\bigvee_{t\in\iota(\varphi)}\varphi_t$ entails $\neg\varphi\simeq\bigwedge_{t\in\iota(\varphi)}\neg(\varphi_t)$, or in other words: that we have $\iota(\neg\varphi)=\iota(\varphi)$ and $(\neg\varphi)_t=\neg(\varphi_t)$ for disjunctive and conjunctive~$\varphi$. \emph{Hint:}~To avoid a case distinction, recall that $\neg\neg\varphi$ and $\varphi$ are syntactically equal.
\end{exercise}

If $\sststile{d}{\alpha}\Gamma$ holds, this must be ``derived" by one of the clauses from Definition~\ref{def:inf-proofs}. We will sometimes refer to that clause as the ``last rule" in the derivation of~$\Gamma$. In the following proofs, we will distinguish cases according to this rule. Our new notation allows us to treat several cases in a uniform way. Specifically, the following is a uniform formulation of clauses~(ii,iii) and part of~(i) (the case of a true literal):
\begin{equation*}
\parbox[t]{.95\textwidth}{We get $\sststile{d}{\alpha}\Gamma$ if $\Gamma$ contains a formula $\varphi\simeq\bigwedge_{t\in\iota(\varphi)}\varphi_t$ ($\varphi\simeq\bigvee_{t\in\iota(\varphi)}\varphi_t$) and we have $\sststile{d(t)}{\alpha(t)}\Delta_t$ with $\alpha(t)\prec\alpha,d(t)\leq d$ and $\Delta_t\subseteq\Gamma,\varphi_t$ for every (some) $t\in\iota(\varphi)$.}
\end{equation*}
When $\varphi$ is a false literal, our uniform clause is void, because it requires a premise to hold for some~$t\in\iota(\varphi)=\emptyset$. The following result is a first classical ingredient of cut elimination. In a more intuitive formulation, the result says that one can transform a derivation of $\Gamma,\forall x.\psi$ into one of $\Gamma,\psi[x/t]$ for any closed term~$t$, without increasing the derivation height or the cut rank. It may help to write out the claim for a formula $\varphi\equiv\varphi_0\land\varphi_1$ in the same more intuitive fashion.

\begin{proposition}[`Inversion']\label{prop:inversion}
From $\sststile{d}{\alpha}\Gamma$ we get $\sststile{d}{\alpha}\Gamma\backslash\{\varphi\},\varphi_t$, for any conjunctive formula~$\varphi\simeq\bigwedge_{t\in\iota(\varphi)}\varphi_t$ and any $t\in\iota(\varphi)$.
\end{proposition}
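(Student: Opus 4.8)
The plan is to prove the proposition by transfinite induction on $\alpha \in E$, following the recursive structure of Definition~\ref{def:inf-proofs}. Fix a conjunctive formula $\varphi \simeq \bigwedge_{t \in \iota(\varphi)} \varphi_t$ and some $t \in \iota(\varphi)$; assume $\sststile{d}{\alpha}\Gamma$ and distinguish cases according to the last rule used to derive it. The key observation that makes the induction go through is a \emph{case split on whether the last rule acts on the distinguished formula $\varphi$ or not}. In most cases the last rule introduces some \emph{other} formula $\chi \in \Gamma$ (or performs a cut on some auxiliary formula), and then $\varphi$ appears merely as a side formula; here I would simply apply the induction hypothesis to each premise and re-apply the same rule, after checking that $\varphi_t$ can be carried along as an additional side formula. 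Since weakening is available and the ordinal and cut-rank bounds ($\alpha(i) \prec \alpha$, $d(i) \leq d$, etc.) are preserved verbatim, this gives $\sststile{d}{\alpha}\Gamma\backslash\{\varphi\},\varphi_t$ with the same $\alpha$ and $d$.

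The interesting case is when the last rule is the conjunctive clause acting \emph{on $\varphi$ itself}. Here the derivation of $\sststile{d}{\alpha}\Gamma$ came with premises $\sststile{d(s)}{\alpha(s)}\Delta_s$ for \emph{every} $s \in \iota(\varphi)$, where $\alpha(s) \prec \alpha$, $d(s) \leq d$, and $\Delta_s \subseteq \Gamma, \varphi_s$. In particular, for our chosen $t$ we already have a premise $\sststile{d(t)}{\alpha(t)}\Delta_t$ with $\Delta_t \subseteq \Gamma, \varphi_t$. Since $\varphi_t$ is exactly the formula we want to expose, I would apply weakening to this single premise: from $\Delta_t \subseteq \Gamma, \varphi_t \subseteq (\Gamma\backslash\{\varphi\}, \varphi_t, \varphi)$ and the further fact that $\varphi$ itself need not be retained, we need $\Delta_t \subseteq \Gamma\backslash\{\varphi\}, \varphi_t$. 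One must take care here: $\Delta_t$ could in principle still contain $\varphi$, but weakening lets us pass to the larger target sequent $\Gamma\backslash\{\varphi\}, \varphi_t$ together with $\alpha(t) \prec \alpha$, which immediately yields $\sststile{d}{\alpha}\Gamma\backslash\{\varphi\},\varphi_t$ (even with the strictly smaller bound $\alpha(t)$, which we then weaken up to $\alpha$). The crucial point is that the conjunctive rule supplies a premise for \emph{each} conjunct, so the specific conjunct $\varphi_t$ we care about is always directly accessible.

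A subtlety worth flagging is that the same formula $\varphi$ might occur in $\Gamma$ and simultaneously be introduced by a different rule (for instance a cut whose cut formula happens to equal $\varphi$, or a quantifier/connective rule whose principal formula is some $\chi$ distinct from $\varphi$ but where $\varphi$ is still present as a side formula). The negation normal form convention and the fact that sequents are sets (so $\varphi$ is a single element, not tracked with multiplicity) keep this clean: whenever $\varphi$ is not the principal formula, it flows through as a side formula and the induction hypothesis applies to each premise, after which re-applying the rule and using weakening to absorb $\varphi_t$ closes the case. The main obstacle, and the one deserving the most care in writing, is the quantifier case (iii) with principal formula $\varphi = \forall x.\psi$ itself, since there $\iota(\varphi)$ is infinite and one must verify that selecting the single premise indexed by the chosen closed term $t$ indeed produces a derivation of $\psi[x/t]$ together with the correct side context; but this too is resolved by the uniform conjunctive reading of the rule, since the premise for index $t \in \iota(\varphi)$ is precisely $\sststile{d(t)}{\alpha(t)}\Delta_t$ with $\Delta_t \subseteq \Gamma, \varphi_t = \Gamma, \psi[x/t]$. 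No interplay between inversion and cut elimination is needed, and the cut rank $d$ is never increased, which is exactly what the later cut-elimination argument will rely on.
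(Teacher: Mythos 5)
Your overall structure (induction on $\alpha$ with a case split on whether the last rule acts on $\varphi$ or not) matches the paper's proof, and your treatment of the side-formula cases is in line with it. But there is a genuine gap in the principal case, at exactly the point you flag and then dismiss. When the last rule introduces $\varphi$ itself, the premise for index $t$ satisfies only $\Delta_t\subseteq\Gamma,\varphi_t$, and since $\varphi\in\Gamma$, the sequent $\Delta_t$ may well contain $\varphi$ (this really happens: the clauses of Definition~\ref{def:inf-proofs} allow the principal formula to persist in the premises, which is how the $\omega$-rule and the other rules are used in practice). In that situation $\Delta_t\not\subseteq\Gamma\backslash\{\varphi\},\varphi_t$, because $\varphi$ lies in the left-hand sequent but not in the right-hand one (note $\varphi\neq\varphi_t$, since $\rk(\varphi_t)<\rk(\varphi)$). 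Weakening only permits passage from a sequent to a \emph{superset}, so it cannot take you from $\sststile{d(t)}{\alpha(t)}\Delta_t$ to $\sststile{d}{\alpha}\Gamma\backslash\{\varphi\},\varphi_t$: it can never delete the unwanted occurrence of $\varphi$. So the step ``weakening lets us pass to the larger target sequent'' fails, because the target is not larger.

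The repair is available inside your own induction: since $\alpha(t)\prec\alpha$, apply the induction hypothesis to the premise $\sststile{d(t)}{\alpha(t)}\Delta_t$ itself, obtaining $\sststile{d(t)}{\alpha(t)}\Delta_t\backslash\{\varphi\},\varphi_t$. Now
\begin{equation*}
\Delta_t\backslash\{\varphi\},\varphi_t\subseteq(\Gamma,\varphi_t)\backslash\{\varphi\},\varphi_t=\Gamma\backslash\{\varphi\},\varphi_t,
\end{equation*}
so weakening does conclude $\sststile{d}{\alpha}\Gamma\backslash\{\varphi\},\varphi_t$. This is exactly what the paper does; the principal case needs the induction hypothesis just as much as the side-formula cases do, contrary to your remark that the conjunct $\varphi_t$ is ``directly accessible'' from the given premise. (A smaller point: in the side-formula cases, when re-applying a rule with principal formula $\psi$, you should check explicitly that $\psi$ still lies in $\Gamma\backslash\{\varphi\},\varphi_t$; for a disjunctive $\psi$ this holds because no formula is both conjunctive and disjunctive, which is the observation the paper makes at this step.)
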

\begin{proof}
We argue by induction on~$\alpha$ and distinguish cases according to the clause by which $\sststile{d}{\alpha}\Gamma$ was deduced. First assume that this clause has introduced~$\varphi\in\Gamma$, so that we have premises $\sststile{d(s)}{\alpha(s)}\Delta_s$ with $\alpha(s)\prec\alpha,d(s)\leq d$ and $\Delta_s\subseteq\Gamma,\varphi_s$ for all~$s\in\iota(\varphi)$. For $t$ as in the proposition, we inductively get~$\sststile{d(t)}{\alpha(t)}\Delta_t\backslash\{\varphi\},\varphi_t$. In~view~of
\begin{equation*}
\Delta_t\backslash\{\varphi\},\varphi_t\subseteq(\Gamma,\varphi_t)\backslash\{\varphi\},\varphi_t=\Gamma\backslash\{\varphi\},\varphi_t,
\end{equation*}
we can now use weakening to get $\sststile{d}{\alpha}\Gamma\backslash\{\varphi\},\varphi_t$. In similar arguments below, we will often assume $\Delta_s=\Gamma,\varphi_s$ and $d(s)=d$, which is permitted due to weakening. It remains to consider the cases in which $\varphi$ occurs as a side formula. Thanks to our uniform notation, there are `only' five such cases: one where $\Gamma$ contains~$Xs$ and $\neg Xt$ for terms $s,t$ with equal value; one that introduces a disjunctive formula~$\psi\in\Gamma$; one that introduces a conjunctive formula~$\psi\in\Gamma$ that is different from~$\varphi$; the progression rule; and the cut rule. We treat one of these cases and leave the others to the reader: Assume that the last rule has introduced a disjunctive formula~$\psi\in\Gamma$. Modulo weakening, it has premise $\sststile{d}{\alpha(s)}\Gamma,\psi_s$ for some $s\in\iota(\psi)$ and $\alpha(s)\prec\alpha$. We~note
\begin{equation*}
(\Gamma,\psi_s)\backslash\{\varphi\},\varphi_t\subseteq\Gamma\backslash\{\varphi\},\varphi_t,\psi_s,
\end{equation*}
where the inclusion may be strict when we have $\psi_s\equiv\varphi$. Crucially, the formulas $\psi$ and $\varphi$ are different, because no formula is both conjunctive and disjunctive. Hence we still have $\psi\in\Gamma\backslash\{\varphi\},\varphi_t$. We can thus apply the same rule to infer $\sststile{d}{\alpha}\Gamma\backslash\{\varphi\},\varphi_t$ from $\sststile{d}{\alpha(s)}(\Gamma,\psi_s)\backslash\{\varphi\},\varphi_t$, which is provided by the induction hypothesis.
\end{proof}

In the previous proof, most parts of the verifications that we have omitted are similar to the given one. Nevertheless, some points differ from case to case. For example, the reason that we have given for $\varphi\not\equiv\psi$ was specific to the case of a disjunction (cf.~the last paragraph of the proof). When writing these lecture notes, the author has checked all cases on paper, despite being familiar with the material. We would advise the reader to do the same, i.\,e., to solve part~(a) of the following exercise. Parts~(b) and~(c) will not be needed in the sequel. The result in~(b) is sometimes called inversion as well; it is also known as $\lor$-exportation.

\begin{exercise}
(a) Check all cases that have been omitted in the previous proof. Do the same for the proofs of Proposition~\ref{prop:reduction} and Theorem~\ref{thm:cut-elimination} below.

(b) Show that $\sststile{d}{\alpha}\Gamma$ entails $\sststile{d}{\alpha}\Gamma\backslash\{\varphi\lor\psi\},\varphi,\psi$.

(c) Assuming that $\psi$ is a false~$\mathcal L_{\pa}$-sentence, show that $\sststile{d}{\alpha}\Gamma$ entails $\sststile{d}{\alpha}\Gamma\backslash\{\psi\}$.
\end{exercise}

Recall that the heights of our infinite derivations are controlled by elements of a well order~$(E,\prec)$. The next steps towards cut elimination will increase the height of derivations considerably. For this reason, we need to strengthen the assumptions on our well order. As stated before, we will eventually construct a well order that satisfies all assumptions, which are thus discharged. In the following, the new binary operation~$+$ is denoted by the same symbol as the unary operation~$\alpha\mapsto\alpha+1$ from Assumption~\ref{ass:E}. In the present section, the expression $\alpha+1$ will always refer to the unary operation. Later we will construe the unary operation as a special case of the binary one (for a suitable element~$1\in E$).

\begin{assumption}\label{ass:E-omega}
Extending Assumption~\ref{ass:E}, we from now on assume that there are operations $+:E^2\to E$ and $\omega:E\to E$ with the following properties:
\begin{enumerate}[label=(\roman*)]
\item We have $\alpha+0=\alpha$ and $\alpha+\beta\prec\alpha+\gamma$ for $\beta\prec\gamma$.
\item For $\alpha\prec\beta$ we have $\omega(\alpha)\prec\omega(\beta)$. For $\alpha,\beta\prec\omega(\gamma)$ we have $\alpha+\beta\prec\omega(\gamma)$.
\end{enumerate}
We also assume that $\alpha\prec\beta$ entails $\alpha+1\preceq\beta$, and that $0\preceq\alpha$ holds for any~$\alpha\in E$.
\end{assumption}

It is common to write $\omega^\alpha$ rather than $\omega(\alpha)$, but we often use the latter to save supercripts. Concerning the following proposition, note that we cannot conclude by a cut over~$\varphi$ when we have $\rk(\varphi)=d$ (because of the condition $\rk(\varphi)<d$ in clause~(v) of Definition~\ref{def:inf-proofs}). The proof of the proposition shows how a cut over~$\varphi$ can be `reduced' to a cut over a simpler formula~$\varphi_t$. For an intuitive explanation of the reduction procedure, the reader may wish to revisit the discussion of ``cuts as lemmata", which can be found towards the end of Section~\ref{sect:sequent-calc-PL}.

\begin{proposition}[`Reduction']\label{prop:reduction}
Assume that $\varphi$ is disjunctive or of the form~$\neg Xt$, and that we have $\rk(\varphi)\leq d$. Then $\sststile{d}{\alpha}\Gamma,\neg\varphi$ and $\sststile{d}{\beta}\Gamma,\varphi$ entail $\sststile{d}{\alpha+\beta}\Gamma$.
\end{proposition}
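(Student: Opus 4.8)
The plan is to argue by induction on $\beta$, keeping $\alpha$, $\Gamma$, and $\varphi$ fixed (so the statement I prove by induction is: for all $\beta$, if $\sststile{d}{\alpha}\Gamma,\neg\varphi$ and $\sststile{d}{\beta}\Gamma,\varphi$ then $\sststile{d}{\alpha+\beta}\Gamma$). The derivation of $\sststile{d}{\beta}\Gamma,\varphi$ ends with some clause of Definition~\ref{def:inf-proofs}, and I distinguish cases according to whether the principal formula of that last rule is the distinguished disjunctive formula $\varphi$ itself, or something else.

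First I would dispose of the \emph{side-formula} cases, which make up the bulk of the routine work. If the last rule introduces some formula other than $\varphi$ (a conjunctive or disjunctive formula, the progression rule, a cut, or an axiom that does not involve $\varphi$), then $\varphi$ is merely a side formula, so the premises have the form $\sststile{d(s)}{\beta(s)}\Delta_s$ with $\beta(s)\prec\beta$, $d(s)\le d$ and $\Delta_s\subseteq(\Gamma,\varphi)$, and $\varphi$ survives into each $\Delta_s\setminus\{\text{principal}\}$. Applying the induction hypothesis to each premise yields $\sststile{d}{\alpha+\beta(s)}\Delta_s\setminus\{\varphi\}$; since $\beta(s)\prec\beta$ forces $\alpha+\beta(s)\prec\alpha+\beta$ by Assumption~\ref{ass:E-omega}(i), I can re-apply the very same rule (the principal formula still lies in the modified sequent, exactly as in the Inversion proof) to conclude $\sststile{d}{\alpha+\beta}\Gamma$. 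The axiom case where $\Gamma,\varphi$ is an axiom not through $\varphi$ is handled directly, as then $\Gamma$ is already an axiom (or I invoke weakening using $\alpha\preceq\alpha+\beta$, which follows from $0\preceq\beta$ and Assumption~\ref{ass:E-omega}(i)).

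The \emph{main case}, and the genuine obstacle, is when the last rule introduces $\varphi$ itself. Since $\varphi$ is disjunctive (or $\neg Xt$), this is a disjunction-style introduction (or the negative progression clause~(iv)), giving for some $t\in\iota(\varphi)$ a premise $\sststile{d(t)}{\beta(t)}\Delta_t$ with $\beta(t)\prec\beta$ and $\Delta_t\subseteq\Gamma,\varphi,\varphi_t$. By the induction hypothesis applied to this premise I obtain $\sststile{d}{\alpha+\beta(t)}\Gamma,\varphi_t$. Separately, Inversion (Proposition~\ref{prop:inversion}) applied to the hypothesis $\sststile{d}{\alpha}\Gamma,\neg\varphi$ — noting $\neg\varphi$ is conjunctive with $(\neg\varphi)_t=\neg(\varphi_t)$ — yields $\sststile{d}{\alpha}\Gamma,\neg(\varphi_t)$. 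Now I cut over $\varphi_t$: this is legitimate because $\rk(\varphi_t)<\rk(\varphi)\le d$, so the cut-rank side condition $\rk(\varphi_t)<d$ of clause~(v) is met. The cut combines $\sststile{d}{\alpha+\beta(t)}\Gamma,\varphi_t$ and $\sststile{d}{\alpha}\Gamma,\neg(\varphi_t)$ into $\sststile{d}{\gamma}\Gamma$ for $\gamma$ strictly above both heights; using $\alpha\prec\alpha+\beta$ (as $0\prec\beta$ in this case, since the rule had a premise) and $\alpha+\beta(t)\prec\alpha+\beta$, I check that $\alpha+\beta$ serves as a valid strict upper bound, giving $\sststile{d}{\alpha+\beta}\Gamma$. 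The delicate points to get right are the rank computation $\rk(\varphi_t)<d$ (which is exactly what makes the new cut admissible where the original was not) and the ordinal bookkeeping showing $\alpha+\beta$ dominates both $\alpha$ and $\alpha+\beta(t)$ strictly, for which Assumption~\ref{ass:E-omega}(i) together with $\alpha\prec\alpha+1\preceq\alpha+\beta$ is the key.
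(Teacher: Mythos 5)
Your overall strategy coincides with the paper's: induction on $\beta$, case distinction on the last rule of $\sststile{d}{\beta}\Gamma,\varphi$, and, when the disjunctive $\varphi$ is principal, a combination of the induction hypothesis, Inversion, and a cut over $\varphi_t$ with $\rk(\varphi_t)<\rk(\varphi)\leq d$; your ordinal bookkeeping in that case is also correct. However, there are two genuine problems.

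First, and most seriously, your handling of the case $\varphi\equiv\neg Xt$ appeals to a rule that does not exist. There is no ``negative progression clause'': clause~(iv) of Definition~\ref{def:inf-proofs} has principal formulas of the form $Xt$ only, never $\neg Xt$, and the formulas $Xt,\neg Xt$ are precisely those to which no decomposition $\varphi\simeq\bigvee_{t\in\iota(\varphi)}\varphi_t$ is assigned, so there is no premise ``for some $t\in\iota(\varphi)$'' to which your induction hypothesis could be applied. The only way $\neg Xt$ can be principal is in the axiom clause~(i): the sequent $\Gamma,\varphi$ contains $Xs$ and $\neg Xt\equiv\varphi$ for terms $s,t$ of equal value, with $Xs\in\Gamma$. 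Here $\Gamma$ by itself need not be an axiom, so no direct conclusion is possible, and the cut machinery is unavailable. The paper resolves this case from the \emph{other} hypothesis: $\sststile{d}{\alpha}\Gamma,\neg\varphi$ is $\sststile{d}{\alpha}\Gamma,Xt$, so Exercise~\ref{exer:inf-basic}(a) allows one to replace $Xt$ by $Xs$ (equal values); since $Xs\in\Gamma$, this gives $\sststile{d}{\alpha}\Gamma$, and then $\sststile{d}{\alpha+\beta}\Gamma$ by weakening, as $\alpha=\alpha+0\preceq\alpha+\beta$. This case is exactly why the proposition mentions $\neg Xt$ at all, and your proposal is missing it.

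Second, your induction setup is inconsistent with your own use of it. You declare $\Gamma$ fixed, yet every application of your induction hypothesis is to a sequent other than $\Gamma$: to $\Gamma,\varphi_t$ in the main case, and to $\Delta_s\setminus\{\varphi\}$ in the side-formula cases. The latter application fails for an additional reason: invoking the hypothesis for the sequent $\Delta_s\setminus\{\varphi\}$ requires $\sststile{d}{\alpha}(\Delta_s\setminus\{\varphi\}),\neg\varphi$, which does \emph{not} follow from $\sststile{d}{\alpha}\Gamma,\neg\varphi$ by weakening, since $\Delta_s\setminus\{\varphi\}$ may lack formulas of $\Gamma$ (weakening only enlarges sequents). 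Both defects disappear if the statement proved by induction on $\beta$ is universally quantified over $\Gamma$ (with $\alpha$, $\varphi$, $d$ fixed), and if in each case you weaken the premise \emph{up} to $\Gamma,\vartheta,\varphi$, where $\vartheta$ is the minor formula of the last rule, weaken the left hypothesis up to $\Gamma,\vartheta,\neg\varphi$, and apply the induction hypothesis with $\Gamma,\vartheta$ in place of $\Gamma$, before re-applying the rule; this is what the paper's phrase ``after weakening'' accomplishes.
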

\begin{proof}
We argue by induction on~$\beta$ and distinguish cases according to the last rule in the derivation~$\sststile{d}{\beta}\Gamma,\varphi$. First assume that $\varphi$ is a principal formula of this rule. There are two possibilities to consider: First assume that clause~(i) of Definition~\ref{def:inf-proofs} applies with $Xs\in\Gamma$ and $\neg Xt\equiv\varphi$, where $s$ and $t$ have equal value. We then have
\begin{equation*}
(\Gamma,\neg\varphi)\backslash\{Xt\},Xs\subseteq\Gamma,
\end{equation*}
as well as $\alpha=\alpha+0\preceq\alpha+\beta$. From $\sststile{d}{\alpha}\Gamma,\neg\varphi$ we can thus conclude $\sststile{d}{\alpha+\beta}\Gamma$ by Exercise~\ref{exer:inf-basic}(a) and weakening. Secondly, assume that we have $\varphi\simeq\bigvee_{t\in\iota(\varphi)}\varphi_t$ and that $\sststile{d}{\beta}\Gamma,\varphi$ was deduced from $\sststile{d}{\beta(t)}\Gamma,\varphi,\varphi_t$ with $\beta(t)\prec\beta$ for some $t\in\iota(\varphi)$. After weakening $\sststile{d}{\alpha}\Gamma,\neg\varphi$ into $\sststile{d}{\alpha}\Gamma,\neg\varphi,\varphi_t$, we can use the induction hypothesis to get
\begin{equation*}
\sststile{d}{\alpha+\beta(t)}\Gamma,\varphi_t.
\end{equation*}
As $\neg\varphi$ is conjunctive with $\iota(\neg\varphi)=\iota(\varphi)\ni t$ and $(\neg\varphi)_t\equiv\neg(\varphi_t)$, we can also infer $\sststile{d}{\alpha}\Gamma,\neg\varphi_t$ from $\sststile{d}{\alpha}\Gamma,\neg\varphi$ by Proposition~\ref{prop:inversion} (inversion). It is straightforward to see that we have $\rk(\varphi_t)<\rk(\varphi)\leq d$ as well as $\alpha\preceq\alpha+\beta(t)\prec\alpha+\beta$. We can thus conclude $\sststile{d}{\alpha+\beta}\Gamma$ by a cut over~$\varphi_t$. It remains to consider the cases where $\varphi$ occurs as a side formula. We treat the progression rule as a representative example and leave the other cases to the reader: Assume $\Gamma,\varphi$ contains a formula $Xt$ such that we have $\sststile{d}{\beta(s)}\Gamma,\varphi,Xs$ with $\beta(s)\prec\beta$ for all $s\tl t$. The induction hypothesis yields
\begin{equation*}
\sststile{d}{\alpha+\beta(s)}\Gamma,Xs.
\end{equation*}
Crucially, the formula $Xt$ is still contained in~$\Gamma$, as it is different from $\varphi$ (being neither disjunctive nor of the form~$\neg Xt$). We can thus re-apply the progression rule in order to infer $\sststile{d}{\alpha+\beta}\Gamma$.
\end{proof}

To formulate the final result on cut elimination, we iterate the map $\omega:E\to E$ from Assumption~\ref{ass:E-omega}: Given $\alpha\in E$, use recursion on~$n\in\mathbb N$ to define
\begin{equation*}
\omega(\alpha,0):=\alpha\qquad\text{and}\qquad\omega(\alpha,n+1):=\omega(\omega(\alpha,n)).
\end{equation*}
Once again, it is common to write $\omega^\alpha_n$ rather than $\omega(\alpha,n)$, but the latter allows us to save supercripts. The following result shows that the assumption of Theorem~\ref{thm:bound-TI} is satisfied (take $(\alpha,n)\mapsto\omega(\alpha,n)$ for~$f$). To obtain an unconditional independence result, it remains to discharge Assuptions~\ref{ass:E} and~\ref{ass:E-omega}, i.\,e., to construct a well order $(E,\prec)$ that satisfies these assumptions. This will be done in the next section.

\begin{theorem}[`Cut elimination']\label{thm:cut-elimination}
The proof system from Definition~\ref{def:inf-proofs} satisfies
\begin{equation*}
\sststile{d+1}{\alpha}\Gamma\quad\Rightarrow\quad\sststile{d}{\omega(\alpha)}\Gamma\quad\qquad\text{and}\qquad\quad\sststile{d}{\alpha}\Gamma\quad\Rightarrow\quad\sststile{0}{\omega(\alpha,d)}\Gamma.
\end{equation*}
\end{theorem}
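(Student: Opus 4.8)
The plan is to prove the two implications in sequence: first the single cut-reduction step $\sststile{d+1}{\alpha}\Gamma\Rightarrow\sststile{d}{\omega(\alpha)}\Gamma$, and then to obtain the full statement $\sststile{d}{\alpha}\Gamma\Rightarrow\sststile{0}{\omega(\alpha,d)}\Gamma$ by iterating the first implication $d$~times. I would establish the reduction step by induction on~$\alpha$, keeping~$d$ fixed, distinguishing cases according to the last rule used to derive $\sststile{d+1}{\alpha}\Gamma$ (in the sense of Definition~\ref{def:inf-proofs}).

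For every rule except a cut over a formula of rank exactly~$d$, the argument is uniform: each premise has the form $\sststile{d(i)}{\alpha(i)}\Delta_i$ with $\alpha(i)\prec\alpha$ and $d(i)\le d+1$, so weakening and the induction hypothesis yield $\sststile{d}{\omega(\alpha(i))}\Delta_i$. Since $\alpha(i)\prec\alpha$ gives $\omega(\alpha(i))\prec\omega(\alpha)$ by the strict monotonicity of~$\omega$ in Assumption~\ref{ass:E-omega}(ii), I can re-apply the same rule to conclude $\sststile{d}{\omega(\alpha)}\Gamma$. For a cut this re-application is legitimate precisely because the cut formula has rank~$<d$, which is still admissible for a derivation of cut rank~$d$.

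The one remaining case, a cut over a formula of rank exactly~$d$, is the main obstacle and the heart of the proof. Here the premises give $\sststile{d+1}{\alpha_0}\Gamma,\psi$ and $\sststile{d+1}{\alpha_1}\Gamma,\neg\psi$ with $\alpha_0,\alpha_1\prec\alpha$ and $\rk(\psi)=d$. Since every sentence is conjunctive, disjunctive, or of the form $Xt$ or $\neg Xt$, exactly one of $\psi,\neg\psi$ is disjunctive or of the form $\neg Xt$; I rename it~$\varphi$ and orient the premises accordingly, so that $\varphi$ meets the hypotheses of Proposition~\ref{prop:reduction} with $\rk(\varphi)=d$. Applying the induction hypothesis to both premises yields $\sststile{d}{\omega(\alpha_1)}\Gamma,\neg\varphi$ and $\sststile{d}{\omega(\alpha_0)}\Gamma,\varphi$, and Reduction then gives $\sststile{d}{\omega(\alpha_1)+\omega(\alpha_0)}\Gamma$. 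To finish I need $\omega(\alpha_1)+\omega(\alpha_0)\preceq\omega(\alpha)$: from $\alpha_0,\alpha_1\prec\alpha$ and monotonicity I get $\omega(\alpha_0),\omega(\alpha_1)\prec\omega(\alpha)$, and then the closure property ``$\beta,\gamma\prec\omega(\delta)$ implies $\beta+\gamma\prec\omega(\delta)$'' from Assumption~\ref{ass:E-omega}(ii) (with $\delta=\alpha$) delivers $\omega(\alpha_1)+\omega(\alpha_0)\prec\omega(\alpha)$; weakening then yields $\sststile{d}{\omega(\alpha)}\Gamma$. This is exactly the step for which the additive-closure property of~$\omega$ was built in, and fixing the orientation of~$\varphi$ and of the two premises is the only delicate bookkeeping.

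For the second implication I would argue by induction on~$d$. The base case $d=0$ is trivial since $\omega(\alpha,0)=\alpha$. For the step, the first implication turns $\sststile{d+1}{\alpha}\Gamma$ into $\sststile{d}{\omega(\alpha)}\Gamma$, and the induction hypothesis applied to the latter gives $\sststile{0}{\omega(\omega(\alpha),d)}\Gamma$; a routine side-induction on~$d$ shows $\omega(\omega(\alpha),d)=\omega(\alpha,d+1)$, which closes the argument. As noted after Theorem~\ref{thm:bound-TI}, this establishes the cut-elimination hypothesis of that theorem with $f(\alpha,d)=\omega(\alpha,d)$.
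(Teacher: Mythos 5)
Your proposal is correct and takes essentially the same approach as the paper's proof: induction on $\alpha$ with cases on the last rule, Proposition~\ref{prop:reduction} (Reduction) resolving the critical cut, the additive closure property of $\omega(\cdot)$ from Assumption~\ref{ass:E-omega}(ii) supplying $\omega(\alpha_1)+\omega(\alpha_0)\prec\omega(\alpha)$, and induction on~$d$ for the second implication. The only cosmetic difference is that you invoke Reduction solely for cuts of rank exactly~$d$ and re-apply cuts of rank $<d$ directly, whereas the paper applies Reduction uniformly to every cut of rank $\leq d$; both variants are valid.
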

\begin{proof}
It suffices to prove the first implication, from which the second follows by induction on~$d$. We argue by induction on~$\alpha$ and distinguish cases according to the last rule in the derivation $\sststile{d+1}{\alpha}\Gamma$. In the crucial case, we are concerned with a cut over an $\mathcal L_{\pa}^X$-sentence~$\varphi$ with $\rk(\varphi)<d+1$. The premises of such a cut have the form $\sststile{d+1}{\alpha(0)}\Gamma,\varphi$ and $\sststile{d+1}{\alpha(1)}\Gamma,\neg\varphi$ with $\alpha(i)\prec\alpha$, again up to weakening. By the induction hypothesis we get
\begin{equation*}
\sststile{d}{\omega(\alpha(0))}\Gamma,\varphi\qquad\text{and}\qquad\sststile{d}{\omega(\alpha(1))}\Gamma,\neg\varphi.
\end{equation*}
Either $\varphi$ or $\neg\varphi$ will be disjunctive or of the form~$\neg Xt$. In view of $\neg\neg\varphi\equiv\varphi$ we may assume that this holds for~$\varphi$. Then Proposition~\ref{prop:reduction} yields
\begin{equation*}
\sststile{d}{\omega(\alpha(1))+\omega(\alpha(0))}\Gamma.
\end{equation*}
Invoking Assumption~\ref{ass:E-omega}, we readily get $\omega(\alpha(1))+\omega(\alpha(0))\prec\omega(\alpha)$. Now $\sststile{d}{\omega(\alpha)}\Gamma$ follows by weakening. In all other cases, it is rather straightforward to reduce to the induction hypothesis. To give one example, let us assume that the last rule deduces a disjunctive formula $\varphi\simeq\bigvee_{t\in\iota(\varphi)}\varphi_t$ that lies in~$\Gamma$. For some~$t\in\iota(\varphi)$ we then have $\sststile{d+1}{\alpha(t)}\Gamma,\varphi_t$ with $\alpha(t)\prec\alpha$. Due to the induction hypothesis we get $\sststile{d}{\omega(\alpha(t))}\Gamma,\varphi_t$. In view of $\omega(\alpha(t))\prec\omega(\alpha)$, we can \mbox{re-apply} the same rule to conclude $\sststile{d}{\omega(\alpha)}\Gamma$.
\end{proof}

\section{An ordinal notation system}\label{sect:eps_0}

In this section we construct a well order~$\varepsilon_0$ such that Assumptions~\ref{ass:E} and~\ref{ass:E-omega} are satisfied with $\varepsilon_0$ at the place of~$E$. Based on the previous sections, we will be able to conclude that induction along~$\varepsilon_0$ is unprovable in $\pa[X]$ (Peano arithmetic with an uninterpreted relation symbol~$X$).

The simultaneous definition of $\varepsilon_0$ and $\prec$ in the following is easily disentangled: One can first define a larger set $T\supseteq\varepsilon_0$ by ignoring the condition $\alpha_i\preceq\alpha_{i-1}$ in (\ref{eq:eps_0-generate}). This set $T$ consists of all finite rooted trees with an order on the successors of each node. By the same clause as below, one obtains a binary relation $\prec$ on all of~$T$. Once $\prec$ is given, it is straightforward to single out $\varepsilon_0\subseteq T$ by recursion.

\begin{definition}\label{def:eps_0}
We simultaneously define a set $\varepsilon_0$ of terms (syntactic expressions) and a binary relation $\prec$ on $\varepsilon_0$. Our terms are generated by the following clause:
\begin{equation}\label{eq:eps_0-generate}\tag{$\star$}
\parbox{.75\textwidth}{Given terms $\alpha_0,\ldots,\alpha_{n-1}\in\varepsilon_0$ with $\alpha_i\preceq\alpha_{i-1}$ for $0<i<n$,\\ we add a new term $\langle\alpha_0,\ldots,\alpha_{n-1}\rangle\in\varepsilon_0$.}
\end{equation}
In particular, $n=0$ yields $\langle\rangle\in\varepsilon_0$. Also note that $\alpha\preceq\beta$ abbreviates the disjunction of $\alpha\prec\beta$ and $\alpha=\beta$, where the latter refers to syntactic equality as terms. The relation $\prec$ is recursively determined by
\begin{equation*}
\langle\alpha_0,\ldots,\alpha_{m-1}\rangle\prec\langle\beta_0,\ldots,\beta_{n-1}\rangle\quad\Leftrightarrow\quad
\begin{cases}
\text{$m<n$ and $\alpha_i=\beta_i$ for all $i<m$},\\
\parbox[t]{.4\textwidth}{or $\alpha_j\prec\beta_j$ and $\alpha_i=\beta_i$ for\\ some $j<m,n$ and all $i<j$,}
\end{cases}
\end{equation*}
i.\,e., by lexicographic comparisons.
\end{definition}

A set-theoretic interpretation of $\varepsilon_0$ in terms of Cantor normal forms will be given in Remark~\ref{rmk:eps_0-set-theoretic} below. In view of this interpretation, many authors write elements of $\varepsilon_0$ in the form $\omega^{\alpha_0}+\ldots+\omega^{\alpha_{n-1}}$ rather than $\langle\alpha_0,\ldots,\alpha_{n-1}\rangle$. We will later see that $(\varepsilon_0,\prec)$ is a well order. Part~(a) of the following exercise explains why we do not work with the larger set $T\supseteq\varepsilon_0$ that was mentioned above.

\begin{exercise}\label{ex:prec-linear}
(a) Find an infinite sequence $\alpha_0,\alpha_1,\ldots\subseteq T$ with $\alpha_{i+1}\prec\alpha_i$ for every index $i\in\mathbb N$ (cf.~the paragraph before Definition~\ref{def:eps_0}).

(b) Show that $\prec$ is a linear order on~$\varepsilon_0$ (and even on $T\supseteq\varepsilon_0$).
\end{exercise}

We now define the operations that are needed to satisfy Assumptions~\ref{ass:E} and~\ref{ass:E-omega}.

\begin{definition}\label{def:eps_0-addition}
Let $+:\varepsilon_0\times\varepsilon_0\to\varepsilon_0$ be given by
\begin{multline*}
\langle\alpha_0,\ldots,\alpha_{m-1}\rangle+\langle\beta_0,\ldots,\beta_{n-1}\rangle:=\langle\alpha_0,\ldots,\alpha_{i-1},\beta_0,\ldots,\beta_{n-1}\rangle\\
\text{with }i=\begin{cases}
m\quad\text{if $n=0$ or $\beta_0\preceq\alpha_j$ for all $j<m$},\\
\min\{j<m\,|\,\alpha_j\prec\beta_0\}\quad\text{otherwise}.
\end{cases}
\end{multline*}
In particular we have $\alpha+\langle\rangle=\alpha=\langle\rangle+\alpha$. Also, define $\omega:\varepsilon_0\to\varepsilon_0$ by $\omega(\alpha):=\langle\alpha\rangle$. Finally, abbreviate $0:=\langle\rangle$, $1:=\omega(0)$ and $\omega:=\omega(1)$ (where the context determines whether $\omega$ stands for the function on~$\varepsilon_0$ or for the given element).
\end{definition}

Note that the choice of~$i$ ensures that the result of addition lies in~$\varepsilon_0$. The following exercise is somewhat tedious but crucial and instructive:

\begin{exercise}\label{ex:eps_0-standing-ass}
(a) Show that the previous constructions satisfy Assumptions~\ref{ass:E} and~\ref{ass:E-omega} with $\varepsilon_0$ at the place of~$E$. \emph{Remark:} Interpret the unary operation $\alpha\mapsto\alpha+1$ from Assumption~\ref{ass:E} as binary addition with fixed argument $1=\omega(0)$. We will later show that $\prec$ is well founded on~$\varepsilon_0$, so you do not need to prove this here. 

(b) Show that our operation $+$ on $\varepsilon_0$ is not commutative, and that it is increasing but not strictly increasing in the first argument. Also show that $\alpha\prec\omega(\alpha)$ holds for any $\alpha\in\varepsilon_0$ (cf.~Remark~\ref{rmk:eps_0-set-theoretic} below). 
\end{exercise}

The following remark requires familiarity with basic set theory (as presented, e.\,g., in~\cite[Sections~1 and~2]{jech03}). At the same time, it is not necessary to understand the remark (or the set theory) to understand the rest of these lecture notes.

\begin{remark}\label{rmk:eps_0-set-theoretic}
In set theory, one has a well ordered class of ordinals that contains a unique element from each isomorphism class of (set-sized) well orders. The ordinals come with a minimal element~$0$ and a successor operation $\alpha\mapsto\alpha+1$. We reserve the letter $\lambda$ for limit ordinals, i.\,e., for ordinals $\lambda\neq 0$ such that $\alpha\prec\lambda$ entails~$\alpha+1\prec\lambda$. By $\omega$ we denote the smallest limit ordinal. Furthermore, the class of ordinals admits a principle of transfinite recursion, which can be used to define arithmetic operations that satisfy the recursive clauses
\begin{alignat*}{3}
\alpha+0&=\alpha,\qquad&\alpha+(\beta+1)&=(\alpha+\beta)+1,\qquad&\alpha+\lambda&=\sup\{\alpha+\beta\,|\,\beta<\lambda\},\\
\alpha\cdot 0&=0,\qquad&\alpha\cdot(\beta+1)&=\alpha\cdot\beta+\alpha,\qquad&\alpha\cdot\lambda&=\sup\{\alpha\cdot\beta\,|\,\beta<\lambda\},\\
\omega^0&=1,\qquad&\omega^{\beta+1}&=\omega^\beta\cdot\omega,\qquad&\omega^\lambda&=\sup\{\omega^\beta\,|\,\beta<\lambda\}.
\end{alignat*}
In connection with part~(b) of the previous exercise, the reader may wish to show that we get $1+\omega=\omega\prec\omega+1$. Any ordinal~$\alpha$ has a unique `Cantor normal form'
\begin{equation*}
\alpha=\omega^{\alpha_0}+\ldots+\omega^{\alpha_{n-1}}\qquad\text{with}\qquad\alpha_{n-1}\preceq\ldots\preceq\alpha_0\preceq\alpha.
\end{equation*}
We always have $\alpha\preceq\omega^\alpha$ and equality is possible. Thus we can have $\alpha_0=\alpha$ above, but only for $n=1$. One traditionally writes
\begin{equation*}
\varepsilon_0:=\min\{\alpha\,|\,\omega^\alpha=\alpha\}.
\end{equation*}
For any ordinal $\alpha\prec\varepsilon_0$ we have $\alpha_0\prec\alpha$ in the Cantor normal form as above. Using transfinite recursion, we can thus define a function $o$ that sends the ordinals below~$\varepsilon_0$ to the terms from Definition~\ref{def:eps_0}, with
\begin{equation*}
o\left(\omega^{\alpha_0}+\ldots+\omega^{\alpha_{n-1}}\right):=\langle o(\alpha_0),\ldots,o(\alpha_{n-1})\rangle
\end{equation*}
for an argument in Cantor normal form. To confirm that the terms have the required form, one needs to check that $o$ is order preserving. In fact, it is an isomorphism that respects the operations from Definition~\ref{def:eps_0-addition}. This justifies the notation $\varepsilon_0$ for our system of terms.
\end{remark}

Fix a reasonable coding of finite objects by natural numbers, so that basic properties can be developed in~$\pa$ (much weaker theories suffice; see e.\,g.\ \cite[Section~I.1]{hajek91}). In particular, this yields a code $c(\alpha)\in\mathbb N$ for each finite tree~$\alpha\in\varepsilon_0$. We will also write $\varepsilon_0$ and $\prec$ for the set $\{c(\alpha)\,|\,\alpha\in\varepsilon_0\}$ and the relation $\{(c(\alpha),c(\beta))\,|\,\alpha\prec\beta\}$. With this convention in place, one should tackle the following task:

\begin{exercise}\label{ex:eps-pr}
Convince yourself that $\varepsilon_0\subseteq\mathbb N$ and ${\prec}\subseteq\mathbb N^2$ are primitive recursive. \emph{Hint:} Consider the paragraph before Definition~\ref{def:eps_0}.
\end{exercise}

It is known that primitive recursive relations can be represented in $\pa$ (see again \cite{hajek91} or also \cite{smorynski-incompleteness}). We will also write $\alpha\in\varepsilon_0$ and $\alpha\prec\beta$ for fixed formulas (with free variables $\alpha,\beta$) that define $\varepsilon_0\subseteq\mathbb N$ and ${\prec}\subseteq\mathbb N^2$. As in Section~\ref{sect:inf-derivs}, the principle of transfinite induction along~$\varepsilon_0$ can now be expressed by
\begin{equation*}
\ti_{\prec}\,:\equiv\,\prog_{\prec}\to\forall\alpha\in\varepsilon_0.X\alpha\quad\text{with}\quad\prog_{\prec}\,:\equiv\,\forall\alpha\in\varepsilon_0(\forall\beta\prec\alpha.X\beta\to X\alpha).
\end{equation*}
By combining all our previous work, we can finally derive the following theorem of Gerhard Gentzen~\cite{gentzen43}. In the next section, we will use this theorem to show that Kruskal's theorem is unprovable in conservative extensions of Peano arithmetic.

\begin{theorem}\label{thm:unprov-eps_0}
The theory~$\pa[X]$ does not prove $\ti_{\prec}$.
\end{theorem}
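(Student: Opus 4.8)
The plan is to instantiate the machinery of Section~\ref{sect:inf-derivs} and the present section with the well order $E=\varepsilon_0$, and then to read off the theorem from the conditional independence result in Theorem~\ref{thm:bound-TI}. By Exercise~\ref{ex:eps_0-standing-ass} the term system $\varepsilon_0$ together with the operations $+$ and $\omega$ from Definition~\ref{def:eps_0-addition} satisfies Assumptions~\ref{ass:E} and~\ref{ass:E-omega}, so all results of the previous two sections are available with $\varepsilon_0$ in place of~$E$; here I rely on the (metatheoretic) well-foundedness of $(\varepsilon_0,\prec)$, which is what makes $(\varepsilon_0,\prec)$ a genuine well order and which is established separately. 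The crucial input is Theorem~\ref{thm:cut-elimination}: its second implication shows that the proof system admits cut elimination in the precise sense required by Theorem~\ref{thm:bound-TI}, namely with the $\Gamma$-independent bound $f(\alpha,d):=\omega(\alpha,d)$.

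It remains to bridge the gap between the arithmetical order $\prec$ on $\varepsilon_0\subseteq\mathbb N$ and the standing assumption that the progression predicate $\tl$ of Definition~\ref{def:inf-proofs} is a well order on all of~$\mathbb N$. First I would fix such a $\tl$ by extending $\prec$: declare $m\tl n$ to hold iff either $m,n\in\varepsilon_0$ and $m\prec n$, or $m\notin\varepsilon_0$ and $n\in\varepsilon_0$, or $m,n\notin\varepsilon_0$ and $m<n$. This places the numerically ordered complement $\mathbb N\setminus\varepsilon_0$ as an initial segment of order type at most~$\omega$ below a $\prec$-ordered copy of~$\varepsilon_0$. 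Since $\varepsilon_0\subseteq\mathbb N$ and $\prec$ are primitive recursive (Exercise~\ref{ex:eps-pr}), so is $\tl$, hence it is definable by an $\mathcal L_{\pa}$-formula as required. The relation $\tl$ restricts to $\prec$ on $\varepsilon_0$, so the inclusion $\varepsilon_0\hookrightarrow\mathbb N$ witnesses $(E,\prec)=(\varepsilon_0,\prec)\lesssim(\mathbb N,\tl)$. With cut elimination in hand, Theorem~\ref{thm:bound-TI} therefore yields $\pa[X]\nvdash\ti_\tl$.

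The main technical point is then to show $\pa[X]\vdash\ti_\prec\to\ti_\tl$, so that the unprovability of $\ti_\tl$ transfers to $\ti_\prec$. I would argue inside $\pa[X]$ from $\prog_\tl$ and $\ti_\prec$ towards $\forall x.Xx$ in three steps. (A)~Every $\tl$-predecessor of a number outside $\varepsilon_0$ is again outside $\varepsilon_0$ and numerically smaller, so $\prog_\tl$ specialises to course-of-values induction on the initial segment $\mathbb N\setminus\varepsilon_0$; applying the least number principle to a hypothetical $\tl$-least number outside $\varepsilon_0$ at which $X$ fails (legitimate precisely because $\pa[X]$ has induction for all $\mathcal L_{\pa}^X$-formulas, including ones containing~$X$), the principle $\prog_\tl$ rules this out and gives $Xx$ for every $x\notin\varepsilon_0$. (B)~For $\alpha\in\varepsilon_0$ the $\tl$-predecessors of $\alpha$ are exactly $\mathbb N\setminus\varepsilon_0$ together with the $\beta\in\varepsilon_0$ satisfying $\beta\prec\alpha$; combining step~(A) with $\prog_\tl$ now yields $\forall\beta\prec\alpha.X\beta\to X\alpha$, that is, $\prog_\prec$. (C)~Feeding $\prog_\prec$ into $\ti_\prec$ gives $\forall\alpha\in\varepsilon_0.X\alpha$, which together with step~(A) delivers $\forall x.Xx$. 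Hence $\ti_\tl$ follows, and so a proof of $\ti_\prec$ in $\pa[X]$ would produce a proof of $\ti_\tl$, contradicting the previous paragraph.

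I expect the relativisation handled in the last paragraph to be the only genuinely new work, the heavy machinery (embedding and cut elimination) having already been established; the delicate point there is that step~(A) really does require the full induction schema of $\pa[X]$ for an $X$-formula. An alternative that avoids the bridge altogether is to re-run the proof of Theorem~\ref{thm:bound-TI} directly for the relativised principle $\ti_\prec$: this needs a version of Lemma~\ref{lem:prove-prog} proving $\sststile{0}{\alpha}\prog_\prec$ for some $\alpha\prec\omega$ (an easy variant of the given derivation, now guarding each inference with $\alpha\in\varepsilon_0$), after which Lemma~\ref{lem:ti-cut-free} applies verbatim and the map $o$ embeds $(\varepsilon_0,\prec)$ into a proper initial segment $(\varepsilon_0\!\restriction\!\gamma,\prec)$, contradicting the earlier exercise that $(E,\prec)\not\lesssim(E\!\restriction\!\gamma,\prec)$. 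Either route is short, and both presuppose the one remaining background fact, the well-foundedness of $\varepsilon_0$.
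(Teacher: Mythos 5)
Your proposal is correct, but it takes a genuinely different route from the paper at the two points where real work is needed. First, the bridge between $\prec$ (with field $\varepsilon_0\subseteq\mathbb N$) and a well order on all of~$\mathbb N$: the paper takes the increasing enumeration $e:\mathbb N\to\mathbb N$ of $\varepsilon_0$, defines $m\tl n:\Leftrightarrow e(m)\prec e(n)$ so that $(\mathbb N,\tl)\cong(\varepsilon_0,\prec)$, and converts a hypothetical proof of $\ti_{\prec}$ into one of $\ti_{\tl}$ by substituting the formula $\exists y(e(y)=x\land Xy)$ for $Xx$ throughout (substitution of a formula for $X$ preserves $\pa[X]$-provability). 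You instead prepend $\mathbb N\setminus\varepsilon_0$ as an initial segment to obtain $\tl$, get $(\varepsilon_0,\prec)\lesssim(\mathbb N,\tl)$ via the inclusion, and prove $\ti_{\prec}\to\ti_{\tl}$ inside $\pa[X]$; your steps (A)--(C) are sound, and you correctly identify that the least number principle for an $X$-formula (hence the full schema of $\pa[X]$) is what carries step~(A). Your bridge trades the paper's formalized properties of $e$ (that $\pa$ proves $e$ to be an order isomorphism) for an internal induction argument; both work. Second, and more significantly, you presuppose the well-foundedness of $(\varepsilon_0,\prec)$ as external input, whereas the paper deliberately avoids this: it opens with a trick covering the ill-founded case, namely that if $\prec$ were not well founded then $\ti_{\prec}$ would fail in some standard model of $\mathcal L_{\pa}^X$ (Exercise~\ref{ex:ti-wf}) and hence be unprovable by soundness. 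At this point of the text, well-foundedness is available only through the set-theoretic Remark~\ref{rmk:eps_0-set-theoretic} or Exercise~\ref{ex:Gentzen-jump}, so your reliance is legitimate but makes the proof less self-contained --- and note that both of your routes need it, since Theorem~\ref{thm:bound-TI} requires $\tl$ to be a well order. One caveat on your sketched alternative: Lemma~\ref{lem:ti-cut-free} does not apply ``verbatim'', because the progression rule in clause~(iv) of Definition~\ref{def:inf-proofs} is tied to an order with field all of~$\mathbb N$. To run the argument for the relativised $\prog_{\prec}$ you would have to modify that clause so that it demands premises only for terms~$s$ with value in $\varepsilon_0$ and $s\prec t$ (otherwise the premises $\Gamma,Xs$ for $s\notin\varepsilon_0$ cannot be derived from the relativised guard), and then re-prove the lemma for the modified system; this is feasible but more than a verbatim application.
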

\begin{proof}
The order $(\varepsilon_0,\prec)$ is well founded, but we have not proved this fact yet (in fact a set-theoretic proof is implicit in Remark~\ref{rmk:eps_0-set-theoretic}). To circumvent this issue, we begin with a trick: Assume that $\prec$ fails to be well founded. As in Exercise~\ref{ex:ti-wf}, this entails that $\ti_{\prec}$ is false in some standard model of $\mathcal L_{\pa}^X$. But then $\ti_{\prec}$ is unprovable in $\pa[X]$, since the latter is sound for such models. In the rest of this proof, we may thus assume that $\prec$ is a well order. We would like to conclude by Theorem~\ref{thm:bound-TI}. However, this theorem was formulated for a well order $\tl$ with field~$\mathbb N$. This was convenient in Section~\ref{sect:inf-derivs}, but it seems too restrictive here: we do not want to assume~$\varepsilon_0=\mathbb N$, since this would heavily depend on the coding. Instead, consider the unique function $e:\mathbb N\to\mathbb N$ that is strictly increasing with image $\varepsilon_0\subseteq\mathbb N$. This function is primitive recursive by bounded search. In particular, it is definable in~$\mathcal L_{\pa}$. We now fix an $\mathcal L_{\pa}$-formula $x\tl y$ that defines the relation ${\tl}\subseteq\mathbb N^2$ with
\begin{equation*}
m\tl n\quad\Leftrightarrow\quad e(m)\prec e(n).
\end{equation*}
Given that $\prec$ is a well order on~$\varepsilon_0$ (cf.~Exercise~\ref{ex:prec-linear} and the beginning of this proof), it is immediate that $\tl$ is a well order with field~$\mathbb N$, as required in Section~\ref{sect:inf-derivs}. Let us check the remaining assumptions of Theorem~\ref{thm:bound-TI}: From Exercise~\ref{ex:eps_0-standing-ass} we know that Assumptions~\ref{ass:E} and~\ref{ass:E-omega} hold with $\varepsilon_0$ at the place of~$E$. Under these assumptions, Theorem~\ref{thm:cut-elimination} shows that our infinite proof system admits cut elimination. Furthermore, we have $(\varepsilon_0,\prec)\cong(\mathbb N,\tl)$ by the definition of~$\tl$. Now that we have checked all assumptions, Theorem~\ref{thm:bound-TI} says that $\ti_{\tl}$ is unprovable in $\pa[X]$. To conclude that there is no proof of $\ti_{\prec}$ in $\pa[X]$, it suffices to show that such a proof could be transformed into one of~$\ti_{\tl}$. Crucially, a proof in $\pa[X]$ remains valid when we replace each occurrence $Xt$ of our relation symbol by the corresponding instance $\varphi(t)$ of some $\mathcal L_{\pa}^X$-formula~$\varphi$. Indeed, the equality axiom for $X$ is transformed into the formula $x=y\land\varphi(x)\to\varphi(y)$, which is provable; and any induction axiom remains an axiom, since the induction formula may be arbitrary. Assuming that we have a proof of $\ti_\prec$, we apply this observation with
\begin{equation*}
\varphi(x):\equiv Xe^{-1}(x):\equiv\exists y(e(y)=x\land Xy).
\end{equation*}
This yields a proof of the formula
\begin{equation*}
\forall\alpha\in\varepsilon_0(\forall\beta\prec\alpha.Xe^{-1}(\beta)\to Xe^{-1}(\alpha))\to\forall\alpha\in\varepsilon_0.Xe^{-1}(\alpha).
\end{equation*}
Above we have seen that $e$ is definable in~$\mathcal L_{\pa}$. For a reasonable definition of~$e$, Peano arithmetic will show that $e$ is an order embedding from $(\mathbb N,\tl)$ onto $(\varepsilon_0,\prec)$ (see \cite[Section~I.1]{hajek91} for the formalization of basic arguments in~$\pa$). Now it is straightforward to complete the transformation into a proof of $\ti_{\tl}$. But we have seen that the latter is unprovable in~$\pa[X]$. Hence the same must hold for~$\ti_{\prec}$.
\end{proof}

Gentzen~\cite{gentzen43} has also shown that his theorem is sharp, in the sense that $\pa[X]$ proves transfinite induction along any proper initial segment of~$\varepsilon_0$. As we will not need this result for our independence proof, we present it in the form of the exercise below (see e.\,g.\ \cite[Section~4]{sommer95} for a detailed solution). To summarize Gentzen's results, one often says that $\varepsilon_0$ is the proof-theoretic ordinal of Peano arithmetic. A precise definition of \emph{proof-theoretic ordinal} requires some care. There are several different definitions, which emphasize different aspects of ordinal analysis and are often but not always equivalent (see~\cite{rathjen-realm} and~\cite[Section~6]{wainer-fairtlough-98}). The author of these lecture notes thinks that these issues should not be overstated: no general definition of proof-theoretic ordinal is needed for concrete applications of ordinal analysis, e.\,g., for independence proofs.

\begin{exercise}\label{ex:Gentzen-jump}
For an $\mathcal L_{\pa}^X$-formula~$\varphi(\alpha)$ with a distinguished free variable, write
\begin{align*}
\prog_{\prec}(\varphi)\,&:\equiv\,\forall\alpha\in\varepsilon_0(\forall\beta\prec\alpha.\varphi(\beta)\to\varphi(\alpha)),\\
\ti_{\prec}(\varphi,\gamma)\,&:\equiv\,\prog_{\prec}(\varphi)\to\forall\alpha\prec\gamma.\varphi(\alpha).
\end{align*}
Here $\gamma\in\varepsilon_0$ is a fixed ordinal notation, to which we refer via its numeral (so it would be more correct to write $\alpha\prec\overline\gamma$ rather than $\alpha\prec\gamma$). A crucial idea of Gentzen was to define the `jump' of~$\varphi$ as the formula
\begin{equation*}
\varphi^J(\alpha)\,:\equiv\,\forall\beta\in\varepsilon_0\left(\forall\gamma\prec\beta.\varphi(\gamma)\to\forall\gamma\prec\beta+\omega(\alpha).\varphi(\gamma)\right).
\end{equation*}
Note that this involves $\mathcal L_{\pa}$-definitions of the operations $+$ and $\omega$ that were introduced in Definition~\ref{def:eps_0-addition}. Basic properties of these operations are provable in~$\pa$. A list of such properties can be found in \cite[Section3.5]{sommer95}, but you can also try to identify and establish them yourself. Show that $\pa[X]$ proves
\begin{enumerate}[label=(\alph*)]
\item the implication $\prog_{\prec}(\varphi)\to\prog_{\prec}(\varphi^J)$,
\item the implication $\ti_{\prec}(\varphi^J,\gamma)\to\ti_{\prec}(\varphi,\omega(\gamma))$,
\item the statement $\ti_{\prec}(\varphi,\gamma)$ for each $\mathcal L_{\pa}^X$-formula~$\varphi(\alpha)$ and each $\gamma\prec\varepsilon_0$.
\end{enumerate}
To establish~(c), use induction on~$n\in\mathbb N$ (in the meta theory) to show the claim for $\gamma=\omega_n$ with $\omega_0=1$ and $\omega_{n+1}=\omega(\omega_n)$. Then argue that any $\gamma\in\varepsilon_0$ lies below some~$\omega_n$. To avoid misunderstanding, we stress that the quantification over $\gamma$ in~(c) takes place in the meta language. Indeed, Theorem~\ref{thm:unprov-eps_0} shows that $\pa[X]$ does not prove the statement $\forall\gamma\prec\varepsilon_0.\ti_{\prec}(\varphi,\gamma)$, in which the quantification over~$\gamma$ is internalized. For this reason, it must be impossible to internalize the induction on~$n$ that we use to prove~(c). To see why this is the case, note that the jump $\varphi^J$ has higher quantifier complexity than~$\varphi$. In the inductive proof of~(c), we take iterated jumps. This means that we consider induction formulas of higher and higher quantifier complexity. It is not possible to represent all these induction formulas by a single formula of~$\mathcal L_{\pa}^X$. To conclude this exercise, we draw one further conclusion in the meta theory: from~(c) we learn that $\ti_{\prec}(\varphi,\gamma)$ holds in all standard models of $\mathcal L_{\pa}^X$ (cf.~Exercise~\ref{ex:ti-wf}), for any $\gamma\in\varepsilon_0$. As in the cited exercise, $\prec$ is thus well founded on any initial segment $\{\alpha\in\varepsilon_0\,|\,\alpha\prec\gamma\}$ of $\varepsilon_0$ (take $\varphi(\alpha)=X\alpha$). One can conclude that the entire order $(\varepsilon_0,\prec)$ is well founded (even though this cannot be proved in~$\pa[X]$). We will see another proof of this fact in the next section.
\end{exercise}

\section{Unprovability of Kruskal's theorem}\label{sect:kruskal}

In this section, we use Theorem~\ref{thm:unprov-eps_0} to derive that Kruskal's theorem (even restricted to binary trees) is unprovable in (conservative extensions of) Peano arithmetic.

The relevant version of Kruskal's theorem has been stated as Theorem~\ref{thm:kruskal-binary} in the introduction. To make the statement of this theorem precise, we now define the set $\mathcal B$ and the relation $\leq_{\mathcal B}$ to which it refers:

\begin{definition}\label{def:binary-trees}
The set $\mathcal B$ consists of the syntactic expressions generated as follows:
\begin{enumerate}[label=(\roman*)]
\item We have an element $\circ\in\mathcal B$.
\item Given $s,t\in\mathcal B$, we add an element $\circ(s,t)\in\mathcal B$.
\end{enumerate}
To define a binary relation $\leq_{\mathcal B}$ on $\mathcal B$ by recursion, we stipulate that we have
\begin{equation*}
\circ(s_0,s_1)\leq_{\mathcal B}\circ(t_0,t_1)\quad\Leftrightarrow\quad
\begin{cases}
\text{we have $s_0\leq_{\mathcal B}t_0$ and $s_1\leq_{\mathcal B}t_1$},\\
\text{or we have $\circ(s_0,s_1)\leq_{\mathcal B}t_i$ for some $i\in\{0,1\}$},
\end{cases}
\end{equation*}
and that we always have $\circ\leq_{\mathcal B}t$ while $s\leq_{\mathcal B}\circ$ holds for $s=\circ$ only.
\end{definition}

Intuitively, the elements of $\mathcal B$ correspond to finite binary trees with root, in which every vertex has zero or two successors. The definition of $\leq_{\mathcal B}$ relies on the fact that left and right successor are distinguished. If we want to ``forget" the order between successors, we can consider the relation $\leq_{\mathcal B}^-$ that is defined as $\leq_{\mathcal B}$ but with the additional clause that $\circ(s_0,s_1)\leq_{\mathcal B}^-\circ(t_0,t_1)$ does also hold when we have $s_0\leq_{\mathcal B}^-t_1$ and $s_1\leq_{\mathcal B}^-t_0$. Write $s=_{\mathcal B}t$ to denote the conjunction of $s\leq_{\mathcal B}^-t$ and $t\leq_{\mathcal B}^-s$. The quotient $\mathcal B/=_{\mathcal B}$ by this equivalence relation corresponds to a notion of binary trees that does not distinguish between left and right successor. Our main results remain valid with $(\mathcal B/=_{\mathcal B},\leq_{\mathcal B}^-)$ at the place of $(\mathcal B,\leq_{\mathcal B})$, as Floris van Vugt has shown in an honours thesis~\cite{van-vugt} supervised by Andreas Weiermann. We will only prove them for $(\mathcal B,\leq_{\mathcal B})$, which is considerably easier. Part~(a) of the following exercise connects our definition to terminology from graph theory.

\begin{exercise}
(a) For $s\in\mathcal B$, let $V(s)$ be the set of vertices of the corresponding tree. Show that each verification of $s\leq_{\mathcal B}t$ according to the recursive definition of~$\leq_{\mathcal B}$ corresponds to a function $f:V(s)\to V(t)$ that preserves infima with respect to the usual tree order. Conclude that $s$ is a topological minor of~$t$ when $s\leq_{\mathcal B}t$.

(b) Show that $\leq_{\mathcal B}$ is a partial order on~$\mathcal B$. \emph{Hint:} For proofs by induction, use the height function $h:\mathcal B\to\mathbb N$ with $h(\circ)=0$ and $h(\circ(s,t))=\max\{h(s),h(t)\}+1$.
\end{exercise}

Let us introduce some terminology that puts Kruskal's theorem into context: Given a partial order~$(X,\leq_X)$, a finite or infinite sequence $x_0,x_1,\ldots\subseteq X$ is called bad if $x_i\leq_X x_j$ holds for no $i<j$. One calls $(X,\leq_X)$ a \emph{well partial order} if all bad sequences are finite. In this terminology, Theorem~\ref{thm:kruskal-binary} asserts that $(\mathcal B,\leq_{\mathcal B})$ is a well partial order. Given that Kruskal's theorem is a result of combinatorics rather than logic, we present its proof in the form of an exercise. Part~(c) of the exercise provides additional context that will not be needed in the rest of these lecture notes.

\begin{exercise}\label{ex:Kruskal}
(a) For an infinite sequence $x_0,x_1,\ldots$ in a well partial order $(X,\leq_X)$, show that there is an infinite $A\subseteq\mathbb N$ with $x_i\leq_X x_j$ for all $i<j$ with $i,j\in A$. \emph{Hint:}~Use the infinite Ramsey theorem (for pairs and two colours), which states that any function
\begin{equation*}
c:[X]^2=\left\{\{x,y\}\,|\,x\neq y\in X\right\}\to\{0,1\}
\end{equation*}
is constant on $[Z]^2$ for some infinite $Z\subseteq X$ (if $X$ itself is infinite).

(b) Prove Theorem~\ref{thm:kruskal-binary}. \emph{Hint:} Assuming that the theorem fails, construct a bad sequence $r_0,r_1,\ldots\subseteq\mathcal B$ such that no sequence $r_0,\ldots,r_{i-1},r_i',r_{i+1}',\ldots$ with $h(r_i')<h(r_i)$ is bad (with $h$ as in the previous exercise). Writing $r_i=\circ(s_i,t_i)$, conclude that $\leq_{\mathcal B}$ is a well partial order on $\{s_i\,|\,i\in\mathbb N\}$ and on $\{t_i\,|\,i\in\mathbb N\}$. Use~(a) to infer that $r_0,r_1,\ldots$ cannot be bad after all. This approach is due to Crispin Nash-Williams~\cite{nash-williams63} and known as the \emph{minimal bad sequence argument}. A full solution of the exercise is implicit in the cited paper (which is very short and readable).

(c) Given a partial order~$(X,\leq_X)$, show that the following are equivalent:
\begin{itemize}
\item $(X,\leq_X)$ is a well partial order,
\item any $Y\subseteq X$ has a finite basis (i.\,e., there is a finite $Y_0\subseteq Y$ with the following property: for any $y\in Y$ there is a $y_0\in Y_0$ with $y_0\leq_X y$),
\item $(X,\leq_X)$ contains no infinite descending sequence and no infinite antichain (where an antichain is an infinite set of pairwise incomparable elements),
\item any linear extension of $(X,\leq_X)$ is a well order (where a linear extension is a linear order~$(X,\preceq)$ such that $x\leq_X y$ entails $x\preceq y$).
\end{itemize}
A solution for part~(c) can be found in~\cite[Section~2]{marcone-survey-wqo-bqo}. In addition to proving them, the cited paper gives a logical analysis of these equivalences (and of many other results about well partial orders).
\end{exercise}

Modulo a standard encoding of finite objects, we can view $\mathcal B$ as a subset of~$\mathbb N$. This subset and the resulting relation ${\leq_{\mathcal B}}\subseteq\mathcal B^2\subseteq\mathbb N^2$ are primitive recursive and, in particular, definable in~$\mathcal L_{\pa}$ (cf.~Exercise~\ref{ex:eps-pr} and the paragraph that precedes it). We could now express Theorem~\ref{thm:kruskal-binary} in the language $\mathcal L_{\pa}^X$ (take $X=\{\langle i,t_i\rangle\,|\,i\in\mathbb N\}$ with $\langle i,t_i\rangle\in\mathbb N$ via Cantor pairing). However, some readers may find $\mathcal L_{\pa}^X$ and $\pa[X]$ somewhat unsatisfactory, since the access to infinite objects is very limited. For this reason, we shall now present another conservative extension of $\pa$, in which infinite subsets of $\mathbb N$ are ``first-class citizens".

A model of second order arithmetic is a pair $(\mathcal N,\mathcal S)$ of an $\mathcal L_{\pa}$-structure~$\mathcal N$ and a subset $\mathcal S\subseteq\mathcal P(\mathcal N)$ of the powerset of~$\mathcal N$. On the syntactic side, second order arithmetic involves two sorts of variables: the first and second order variables (also called number and set variables) are denoted by lower and upper case letters $x,y,\ldots$ and $X,Y,\ldots$ and range over $\mathcal N$ and~$\mathcal S$, respectively. In addition to the prime formulas of $\mathcal L_\pa$, we have a prime formula $t\in X$ for each second order variable~$X$ and each term $t$ of the first order language~$\mathcal L_\pa$. To obtain the formulas of second order arithmetic (also called $\mathcal L_2$-formulas), one combines the prime formulas by the usual connectives ($\neg,\to$~etc.), quantifiers over number variables (written $\forall x$ and $\exists x$ or also $\forall x\in\mathbb N$ and $\exists x\in\mathbb N$) and quantifiers over set variables (written $\forall X$ and $\exists X$ or $\forall X\subseteq\mathbb N$ and~$\exists X\subseteq\mathbb N$). One can construe these formulas as first order formulas in a language with predicate symbols for the two sorts (see e.\,g.~\cite[Remark~III.1.15]{hajek91}), and completeness holds with respect to the aforementioned notion of model.

Consider an $\mathcal L_2$-formula $\varphi(x)$ with a distinguished free number variable (and possibly with further free number and set variables as parameters). The comprehension principle for $\varphi$ is (the universal closure of) the formula
\begin{equation}\tag{$\varphi\textsf{-CA}$}
\exists X\forall x(x\in X\leftrightarrow\varphi(x)),
\end{equation}
which intuitively asserts the existence of $X=\{x\in\mathbb N\,|\varphi(x)\}$. An $\mathcal L_2$-formula is called \emph{arithmetical} if it contains no quantifiers over set variables (but free set variables are allowed). We will see that Peano arithmetic is closely related to the \mbox{$\mathcal L_2$-theory} $\aca_0$ (arithmetical comprehension) that has the following axioms: First, we keep the axioms of Robinson arithmetic and the equality axioms, now including the instance $x=y\land x\in X\to y\in X$ (we do not include equality between sets but abbreviate $\forall x(Xx\leftrightarrow Yx)$ by $X=Y$). Secondly, we include the comprehension principle $\varphi\textsf{-CA}$ for each arithmetical~$\varphi$. Finally, we admit induction in the form
\begin{equation*}
0\in X\land\forall x(x\in X\to x+1\in X)\to\forall x.x\in X,
\end{equation*}
where $X$ is a set variable. It is worth working out the following in some detail:

\begin{exercise}\label{ex:aca_0-ind}
Show that $\aca_0$ proves the induction statement $\mathcal I\varphi$ for each arithmetical $\mathcal L_2$-formula $\varphi$ (see Section~\ref{sect:inf-derivs} for the notation). \emph{Hint:} To reduce to induction for set variables, form $X=\{x\in\mathbb N\,|\,\varphi(x)\}$ by arithmetical comprehension.
\end{exercise}

In contrast, if we add an axiom $\mathcal I\varphi$ for every $\mathcal L_2$-formula $\varphi$, we get a strictly stronger theory denoted by $\aca$ (without the subscript zero; e.\,g., $\aca$ proves the full infinite Ramsey theorem while $\aca_0$ does not, as shown by Carl Jockusch~\cite{jockusch-ramsey}). Even stronger theories arise if we admit comprehension for formulas with set quantifiers. These theories and their relation to mathematical practice are studied in the research programme of \emph{reverse mathematics}. More specifically, a typical result of reverse mathematics will show that some mathematical theorem is equivalent to a comprehension or other set existence principle, provably in a weak base theory. We note that this base theory should not prove the theorems under consideration, or else it will prove the equivalence for trivial reasons. For more information on reverse mathematics we refer to the founding paper by Harvey Friedman~\cite{friedman-rm} and the textbook by Stephen Simpson~\cite{simpson09}.

We identify each $\mathcal L_{\pa}^X$-formula with the $\mathcal L_2$-formula in which each occurrence $Xt$ of the predicate symbol $X$ is replaced by $t\in X$ for a fixed set variable that we denote by~$X$ as well. Modulo this identification, we have the following connection with Peano arithmetic (the given proof follows~\cite{hajek91}):

\begin{proposition}\label{prop:aca-conservative}
The theory $\aca_0$ is a conservative extension of $\pa[X]$, i.\,e., the two theories prove the same $\mathcal L_{\pa}^X$-formulas.
\end{proposition}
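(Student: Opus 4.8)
The plan is to prove the two inclusions separately. Since both theories are closed under logical consequence and the stated identification embeds $\mathcal L_{\pa}^X$-derivations into $\mathcal L_2$-derivations (each $\mathcal L_{\pa}^X$-formula becomes an arithmetical $\mathcal L_2$-formula whose only set variable is the distinguished $X$), the inclusion ``$\pa[X]$-theorems are $\aca_0$-theorems'' reduces to checking that $\aca_0$ proves the translation of each axiom of $\pa[X]$. The reverse inclusion — genuine conservativity — I would treat semantically.

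First I would dispatch the easy direction. The axioms of Robinson arithmetic and the purely arithmetical equality axioms are literally shared by the two theories. The equality axiom $x=y\land Xx\to Xy$ translates to $x=y\land x\in X\to y\in X$, which is explicitly an axiom of $\aca_0$. Finally, each instance $\mathcal I\varphi$ with $\varphi$ an $\mathcal L_{\pa}^X$-formula translates to $\mathcal I\varphi$ for an arithmetical $\mathcal L_2$-formula, and this is provable in $\aca_0$ by Exercise~\ref{ex:aca_0-ind}. Hence every $\mathcal L_{\pa}^X$-theorem of $\pa[X]$ is a theorem of $\aca_0$.

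For conservativity I would argue by contraposition. Suppose $\pa[X]\nvdash\theta$ for an $\mathcal L_{\pa}^X$-formula $\theta$; by completeness there is a model $\mathcal M=(\mathcal N,A)$ of $\pa[X]$ with an assignment of the free number variables of $\theta$ such that $\mathcal M\nvDash\theta$, where $\mathcal N$ is the $\mathcal L_{\pa}$-reduct and $A\subseteq\mathcal N$ interprets $X$. I would expand $\mathcal M$ to a second-order structure $(\mathcal N,\mathcal S)$ by letting $\mathcal S$ be the collection of all subsets of $\mathcal N$ that are parametrically definable in $\mathcal M$ by an $\mathcal L_{\pa}^X$-formula, namely
\[
\mathcal S:=\bigl\{\{\,n\in\mathcal N:\mathcal M\vDash\psi(n,\bar m)\,\}\ :\ \psi\text{ an }\mathcal L_{\pa}^X\text{-formula},\ \bar m\in\mathcal N\bigr\}.
\]
Since $A=\{n:\mathcal M\vDash Xn\}$ we have $A\in\mathcal S$. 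The number domain is unchanged, and $\theta$ has no set quantifiers, so interpreting the distinguished set variable $X$ as $A$ gives $(\mathcal N,\mathcal S)\nvDash\theta$ directly from $\mathcal M\nvDash\theta$. Thus, once I verify $(\mathcal N,\mathcal S)\vDash\aca_0$, soundness yields $\aca_0\nvdash\theta$, which is the desired contrapositive.

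The main work — and the step I expect to be the crux — is checking the axioms of $\aca_0$ in $(\mathcal N,\mathcal S)$, in particular arithmetical comprehension. Robinson arithmetic and the arithmetical equality axioms hold because they already hold in $\mathcal N$; the equality axiom for $\in$ holds since defining formulas respect equality in $\mathcal M$; and set induction for any $Z\in\mathcal S$ is precisely the instance $\mathcal I\psi$ of $\mathcal M$, where $\psi$ defines $Z$, so it holds because $\mathcal M\vDash\pa[X]$. For comprehension, given an arithmetical $\mathcal L_2$-formula $\varphi(x)$ with set parameters $Z_1,\dots,Z_k\in\mathcal S$ defined by $\mathcal L_{\pa}^X$-formulas $\psi_1,\dots,\psi_k$, I would substitute each atomic subformula $t\in Z_j$ by $\psi_j(t)$; because $\varphi$ carries no set quantifiers, the result $\widehat\varphi$ is again an $\mathcal L_{\pa}^X$-formula, and a routine induction on $\varphi$ shows $(\mathcal N,\mathcal S)\vDash\varphi(n,\bar Z)$ iff $\mathcal M\vDash\widehat\varphi(n)$. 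Hence the witnessing set $\{n:\mathcal M\vDash\widehat\varphi(n)\}$ lies in $\mathcal S$ and satisfies comprehension. The decisive point is exactly that comprehension is restricted to arithmetical formulas: the absence of set quantifiers is what allows the set parameters to be absorbed into $\mathcal L_{\pa}^X$-defining formulas while keeping $\mathcal S$ adequate, whereas genuine set quantifiers would force $\mathcal S$ to reflect its own internal structure and the construction would break down.
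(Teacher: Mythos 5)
Your proposal is correct and matches the paper's own proof in all essentials: the easy inclusion via Exercise~\ref{ex:aca_0-ind}, and conservativity by expanding an arbitrary model $(\mathcal N,X^{\mathcal N})\vDash\pa[X]$ with the collection $\mathcal S$ of parametrically $\mathcal L_{\pa}^X$-definable sets, verifying arithmetical comprehension by substituting defining formulas for set parameters and reading set induction as an instance of $\mathcal I\varphi$. Even your closing observation---that the absence of set quantifiers is what makes the substitution argument go through---mirrors the paper's remark that the comprehension instance does not depend on $\mathcal S$.
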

\begin{proof}
In order to see that $\pa[X]$ is contained in $\aca_0$, note that the latter contains all $\pa[X]$-axioms except for the induction statements~$\mathcal I\varphi$. The latter are provable whenever $\varphi$ is an $\mathcal L_{\pa}^X$-formula and hence arithmetical, by Exercise~\ref{ex:aca_0-ind}. For conservativity, we show that any model $(\mathcal N,X^{\mathcal N})$ of $\pa[X]$ can be extended into a model $(\mathcal N,\mathcal S)$ of $\aca_0$ with $X^\mathcal N\in\mathcal S$ (where $X^\mathcal N$ is the interpretation of the relation symbol $X$ from~$\mathcal L_{\pa}^X$). Once this is established, we can conclude as follows: Assume that a given $\mathcal L_{\pa}^X$-formula~$\psi$ is unprovable in~$\pa[X]$. By completeness (for first order logic) we get a model $(\mathcal N,X^{\mathcal N})\nvDash\psi$. If we view $\psi$ as an $\mathcal L_2$-formula, this means that $\psi$ fails in $\mathcal N$ when the free set variable~$X$ is interpreted as $X^{\mathcal N}$. In particular, $\psi$ is not valid in our model $(\mathcal N,\mathcal S)\vDash\aca_0$ with $X^{\mathcal N}\in\mathcal S$. Now soundness (for second order arithmetic) entails that $\psi$ is unprovable in $\aca_0$. By contraposition, if $\aca_0$ proves $\psi$, then so must $\pa[X]$, as conservativity demands. It remains to describe the required transformation of models. Given $(\mathcal N,X^{\mathcal N})\vDash\pa[X]$, we put
\begin{gather*}
\mathcal S=\{\llbracket\varphi(x,\mathbf n)\rrbracket\,|\,\varphi(x,\mathbf y)\text{ an $\mathcal L_{\pa}^X$-formula and }\mathbf n\in\mathcal N\}\\
\text{with}\quad\llbracket\varphi(x,\mathbf n)\rrbracket=\{m\in\mathcal N\,|\,(\mathcal N,X^{\mathcal N})\vDash\varphi(m,\mathbf n)\}.
\end{gather*}
Here $\mathbf y$ stands for a sequence of variables, which we often suppress for notational convenience. Let us observe $X^{\mathcal N}=\llbracket Xx\rrbracket\in\mathcal S$. It remains to show $(\mathcal N,\mathcal S)\vDash\aca_0$. The axioms of Robinson arithmetic hold in $(\mathcal N,\mathcal S)$ since they are contained in $\pa[X]$ and do not depend on~$\mathcal S$. Concerning equality, we may assume that $=$ is interpreted by actual equality in~$\mathcal N$. Arithmetical comprehension amounts to the following: for any arithmetical $\mathcal L_2$-formula $\psi(x,Z_1,\ldots,Z_n)$ and any $S_1,\ldots,S_k\in\mathcal S$ (as well as suppressed number parameters from~$\mathcal N$), there is a $T\in\mathcal S$ with
\begin{equation*}
\forall m\in\mathcal N(m\in T\leftrightarrow(\mathcal N,\mathcal S)\vDash\psi(m,S_1,\ldots,S_k)).
\end{equation*}
Note that this statement does not actually depend on~$\mathcal S$, as $\psi$ contains no set quantifiers. In view of $S_i\in\mathcal S$ we can write $S_i=\llbracket\varphi_i(x)\rrbracket$. Let $\varphi$ be the $\mathcal L_{\pa}^X$-formula that results from $\psi(x,Z_1,\ldots,Z_k)$ when we replace each occurrence of $t\in Z_i$ by the formula~$\varphi_i(t)$. We then have
\begin{equation*}
(\mathcal N,\mathcal S)\vDash\psi(m,S_1,\ldots,S_k)\quad\Leftrightarrow\quad(\mathcal N,X^{\mathcal N})\vDash\varphi(m),
\end{equation*}
by a straightforward induction over~$\psi$. Thus it suffices to set $T=\llbracket\varphi(x)\rrbracket$. Finally, the induction axiom of $\aca_0$ requires that we have
\begin{equation*}
0\in\llbracket\varphi(x)\rrbracket\land\forall m\in\mathcal N(m\in\llbracket\varphi(x)\rrbracket\to m+1\in\llbracket\varphi(x)\rrbracket)\to\forall m\in\mathcal N.m\in\llbracket\varphi(x)\rrbracket
\end{equation*}
for any intepretation $\llbracket\varphi(x)\rrbracket\in\mathcal S$ of the set variable (with $0,1$ and $+$ interpreted as in~$\mathcal N$). This amounts to $(\mathcal N,X^{\mathcal N})\vDash\mathcal I\varphi$, which holds since $\mathcal I\varphi$ is a $\pa[X]$-axiom.
\end{proof}

We assume that the collection $\mathcal B$ of finite binary trees and the embeddability relation $\leq_{\mathcal B}$ are represented in $\pa\subseteq\aca_0$, as discussed above.  In $\aca_0$ we can use set variables to represent quantification over infinite sequences $t_0,t_1,\ldots\subseteq\mathcal B$, using the Cantor pairing function $\langle\cdot,\cdot\rangle$. Thus Kruskal's theorem for binary trees (i.\,e., Theorem~\ref{thm:kruskal-binary} from the introduction) can be expressed by the ``pseudo $\mathcal L_2$-formula"
\begin{multline*}
\forall X\subseteq\mathbb N[\forall i\in\mathbb N\exists t,p\in\mathbb N(t\in\mathcal B\land p=\langle i,t\rangle\land p\in X)\to\\
\exists{i,j,s,t,p,q\in\mathbb N}(i<j\land s,t\in\mathcal B\land p=\langle i,s\rangle\land q=\langle j,t\rangle\land p,q\in X\land s\leq_{\mathcal B}t)].
\end{multline*}
Some variation is possible: In particular, we could strengthen the antecedent by demanding that all elements of $X$ have the form $\langle i,t\rangle$ with $t\in\mathcal B$ and that there is only one such $t$ for each~$i$. However, the resulting formula will be equivalent (provably in~$\aca_0$). We have called the above a ``pseudo formula", since it involves expressions that are not in the language $\mathcal L_2$ (analogous to the more common term ``pseudocode" in the context of computer science). Each such expression can be replaced by an actual formula (e.\,g., the relation $z=\langle x,y\rangle$ has a reasonable representation in~$\pa$ since Cantor pairing is primitive recursive). It takes some practice to use pseudo formulas with confidence. The interested reader is advised to work through the first sections of~\cite{hajek91} (for a very thorough treatment of first order arithmetic) and~\cite{simpson09} (for second order aspects).

The following result will help us to connect Kruskal's theorem to the well order $(\varepsilon_0,\prec)$ that was described in Section~\ref{sect:eps_0}. The formalization of the claim in $\mathcal L_2$ is similar to the one of Kruskal's theorem above. In particular, a set variable is used to assert that $\{\langle\alpha,f(\alpha)\rangle\,|\,\alpha\in\varepsilon_0\}\subseteq\mathbb N$ exists.

\begin{proposition}\label{prop:quasi-embedding}
Provably in~$\aca_0$ (in fact in weaker theories), there is a function $f:\varepsilon_0\to\mathcal B$ that reflects the order, i.\,e., such that $f(\alpha)\leq_{\mathcal B}f(\beta)$ entails $\alpha\preceq\beta$.
\end{proposition}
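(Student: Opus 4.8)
The plan is to define $f$ by reading an $\varepsilon_0$-term as an additive (Cantor) normal form and encoding it as a right-leaning binary tree. Concretely, I would set $f(\langle\rangle):=\circ$ and, for $n\geq 1$,
\[
f(\langle\alpha_0,\ldots,\alpha_{n-1}\rangle):=\circ\bigl(f(\alpha_0),\,f(\langle\alpha_1,\ldots,\alpha_{n-1}\rangle)\bigr),
\]
so that $f(\langle\alpha_0,\ldots,\alpha_{n-1}\rangle)$ is the ``cons-list'' $[f(\alpha_0),\ldots,f(\alpha_{n-1})]$ with $\circ$ as nil: the left child at depth $i$ along the right spine encodes the $i$-th exponent $\alpha_i$, and the right spine records the additive decomposition. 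This recursion is well founded because both $\alpha_0$ and the tail $\langle\alpha_1,\ldots,\alpha_{n-1}\rangle$ are proper subterms, and the tail is again a legitimate element of $\varepsilon_0$, the condition $\alpha_i\preceq\alpha_{i-1}$ being inherited. Since the recursion is primitive recursive on the codes, the graph $\{\langle\alpha,f(\alpha)\rangle:\alpha\in\varepsilon_0\}$ is available by $\Delta^0_1$-comprehension, so the construction lives in $\aca_0$ and in fact in much weaker systems.

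The heart of the matter is to verify the reflection property $f(\alpha)\leq_{\mathcal B}f(\beta)\Rightarrow\alpha\preceq\beta$. I would prove this by induction on the sum of the sizes of $\alpha$ and $\beta$ (as finite trees), distinguishing cases according to the clause of Definition~\ref{def:binary-trees} that justifies $f(\alpha)\leq_{\mathcal B}f(\beta)$. The base cases are immediate: if $\alpha=\langle\rangle$ then $\alpha\preceq\beta$ trivially, and if $\beta=\langle\rangle$ then $f(\beta)=\circ$, so $f(\alpha)\leq_{\mathcal B}\circ$ forces $f(\alpha)=\circ$ and hence $\alpha=\langle\rangle=\beta$. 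For the inductive step, write $\alpha=\langle\alpha_0,\ldots,\alpha_{m-1}\rangle$ and $\beta=\langle\beta_0,\ldots,\beta_{n-1}\rangle$ with $m,n\geq 1$, put $\alpha':=\langle\alpha_1,\ldots,\alpha_{m-1}\rangle$ and $\beta':=\langle\beta_1,\ldots,\beta_{n-1}\rangle$, and note $f(\alpha)=\circ(f(\alpha_0),f(\alpha'))$, $f(\beta)=\circ(f(\beta_0),f(\beta'))$.

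In the \emph{componentwise} case I have $f(\alpha_0)\leq_{\mathcal B}f(\beta_0)$ and $f(\alpha')\leq_{\mathcal B}f(\beta')$, whence $\alpha_0\preceq\beta_0$ and $\alpha'\preceq\beta'$ by the induction hypothesis. If $\alpha_0\prec\beta_0$, the first lexicographic coordinate already gives $\alpha\prec\beta$ by Definition~\ref{def:eps_0}; if $\alpha_0=\beta_0$, the comparison of $\alpha$ and $\beta$ reduces exactly to that of $\alpha'$ and $\beta'$ (matching leading coordinates and shifting the prefix/length clause), so $\alpha'\preceq\beta'$ yields $\alpha\preceq\beta$. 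In the \emph{into a child} case I have $f(\alpha)\leq_{\mathcal B}f(\beta_0)$ or $f(\alpha)\leq_{\mathcal B}f(\beta')$; since $\beta_0$ and $\beta'$ are proper subterms of $\beta$, the induction hypothesis gives $\alpha\preceq\beta_0$ or $\alpha\preceq\beta'$, and it remains to observe $\beta_0\prec\beta$ and $\beta'\prec\beta$. The first holds because $\beta_0\prec\omega(\beta_0)=\langle\beta_0\rangle\preceq\beta$, using $\alpha\prec\omega(\alpha)$ from Exercise~\ref{ex:eps_0-standing-ass}(b), the identity $\omega(\beta_0)=\langle\beta_0\rangle$ of Definition~\ref{def:eps_0-addition}, and the prefix clause of $\prec$; the second holds because $\beta$ compared with its tail $\beta'$ either differs first in the leading coordinate (as $\beta_1\preceq\beta_0$) or is strictly longer.

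The only genuinely delicate point is the bookkeeping of the lexicographic order: one must check that, after matching equal leading exponents, the comparison of $\alpha$ and $\beta$ really is the comparison of the tails $\alpha'$ and $\beta'$ (including the case where one sequence is a prefix of the other), and confirm $\beta_0,\beta'\prec\beta$. These verifications are elementary once the right-comb picture is fixed, and no step uses more than numerical induction on tree sizes together with a $\Delta^0_1$-definition of $f$ — which is precisely why the statement goes through well below $\aca_0$.
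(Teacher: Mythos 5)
Your proposal is correct and follows essentially the same route as the paper: the identical cons-list encoding $f$, the same induction on combined size, the same base cases, and the same two-case analysis of how $f(\alpha)\leq_{\mathcal B}f(\beta)$ can arise. The only difference is in the final bookkeeping: where you verify the resulting inequalities directly from the lexicographic definition of $\prec$, the paper writes $\alpha=\omega(\alpha_0)+\alpha'$ and $\beta=\omega(\beta_0)+\beta'$ and invokes the monotonicity properties of $+$ and $\omega$ from Exercise~\ref{ex:eps_0-standing-ass}, which packages the same verifications you carry out by hand.
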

\begin{proof}
First define an auxiliary ``length" function $l:\varepsilon_0\to\mathbb N$ by setting
\begin{equation*}
l(\langle\alpha_0,\ldots,\alpha_{n-1}\rangle):=l(\alpha_0)+\ldots+l(\alpha_{n-1})+n.
\end{equation*}
In particular, we take this to mean $l(\langle\rangle)=0$ for $n=0$. For $\alpha\in\varepsilon_0$ we now use recursion over $l(\alpha)$ to define~$f(\alpha)$, by setting
\begin{equation*}
f(\langle\alpha_0,\ldots,\alpha_{n-1}\rangle):=\begin{cases}
\circ & \text{if $n=0$},\\
\circ(f(\alpha_0),f(\langle\alpha_1,\ldots,\alpha_{n-1}\rangle)) & \text{otherwise}.
\end{cases}
\end{equation*}
Concerning the first case, recall that $\circ$ corresponds to the tree that consists of a root only. In the second case, the root has two recursively given subtrees. The function~$f$ is primitive recursive, e.\,g., because the number of $\alpha\in\varepsilon_0$ with $l(\alpha)\leq n$ is finite and effectively bounded for each~$n\in\mathbb N$. In particular, $f$ has an arithmetical definition for which $\aca_0$ proves the defining equation (see e.\,g.~\cite[Section~I.1(c)]{hajek91}). We use induction over $l(\alpha)+l(\beta)$ to prove
\begin{equation*}
f(\alpha)\leq_{\mathcal B}f(\beta)\quad\Rightarrow\quad\alpha\preceq\beta.
\end{equation*}
Note that the induction can be carried out in $\aca_0$, since the claim is arithmetical (cf.~Exercise~\ref{ex:aca_0-ind}). Let us first observe that the desired implication holds for $\alpha=\langle\rangle$ (as $\langle\rangle\preceq\beta$ holds for any $\beta$) and for $\beta=\langle\rangle$ (where $f(\alpha)\leq_{\mathcal B}f(\beta)=\circ$ entails $f(\alpha)=\circ$ and hence $\alpha=\langle\rangle$). In the remaining case, we can write $\alpha=\langle\alpha_0,\ldots,\alpha_{m-1}\rangle$ and $\beta=\langle\beta_0,\ldots,\beta_{n-1}\rangle$ with~$m,n>0$. Let us abbreviate $\alpha'=\langle\alpha_1,\ldots,\alpha_{m-1}\rangle$ as well as $\beta'=\langle\beta_1,\ldots,\beta_{n-1}\rangle$. We note that this yields $\alpha=\omega(\alpha_0)+\alpha'$ and $\beta=\omega(\beta_0)+\beta'$ according to Definition~\ref{def:eps_0-addition}. In view of Definition~\ref{def:binary-trees}, the inequality
\begin{equation*}
f(\alpha)=\circ(f(\alpha_0),f(\alpha'))\leq_{\mathcal B}\circ(f(\beta_0),f(\beta'))=f(\beta)
\end{equation*}
can hold for two different reasons: First, assume that we have $f(\alpha_0)\leq_{\mathcal B}f(\beta_0)$ as well as $f(\alpha')\leq_{\mathcal B}f(\beta')$. By induction hypothesis, this entails $\alpha_0\preceq\beta_0$ and $\alpha'\preceq\beta'$. The result of Exercise~\ref{ex:eps_0-standing-ass} yields $\omega(\alpha_0)\preceq\omega(\beta_0)$ and then
\begin{equation*}
\alpha=\omega(\alpha_0)+\alpha'\preceq\omega(\beta_0)+\alpha'\preceq\omega(\beta_0)+\beta'=\beta.
\end{equation*}
We point out that Exercise~\ref{ex:eps_0-standing-ass} is solved by an induction over terms, which can be formalized in $\pa\subseteq\aca_0$ (and in much weaker theories, see e.\,g.~\cite[Section~3]{sommer95}). In the remaining case, the given inequality $f(\alpha)\leq_{\mathcal B}f(\beta)$ holds because we have $f(\alpha)\leq_{\mathcal B}f(\beta_0)$ or $f(\alpha)\leq_{\mathcal B}f(\beta')$. Inductively, this yields $\alpha\preceq\beta_0$ or $\alpha\preceq\beta'$. Again by Exercise~\ref{ex:eps_0-standing-ass}, we have $\beta_0\prec\omega(\beta_0)+\langle\rangle\preceq\beta$ as well as $\beta'\preceq\beta$. We now get $\alpha\preceq\beta$ by the transitivity of~$\prec$ (see Exercise~\ref{ex:prec-linear}).
\end{proof}

It may be interesting to observe that injectivity is automatic:

\begin{exercise}
Show that any order reflecting $f:P\to Q$ between partial orders is injective. \emph{Remark:} An order reflecting function is also called a \emph{quasi embedding}. 
\end{exercise}

Finally, we prove Theorem~\ref{thm:kruskal-unprovable} from the introduction, for the given formalization of Kruskal's theorem and the conservative extension~$\aca_0$ of Peano arithmetic. Note that Kruskal's theorem for arbitrary finite (not necessarily binary) trees is even stronger and hence also unprovable in~$\aca_0$.

\begin{theorem}\label{thm:Kruskal-indep-ACA}
The theory $\aca_0$ does not prove Kruskal's theorem for binary trees (i.\,e., it does not prove Theorem~\ref{thm:kruskal-binary} with $\mathcal B$ and $\leq_{\mathcal B}$ as in Definition~\ref{def:binary-trees}).
\end{theorem}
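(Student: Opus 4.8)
The plan is to reduce the unprovability of Kruskal's theorem to the unprovability of transfinite induction along $\varepsilon_0$, which was established as Theorem~\ref{thm:unprov-eps_0}. Concretely, I would argue by contradiction: suppose $\aca_0$ proves Kruskal's theorem for binary trees. The heart of the matter is then to show, reasoning inside $\aca_0$, that Kruskal's theorem implies $\ti_{\prec}$. Once this implication is available, $\aca_0$ proves $\ti_{\prec}$; and since $\ti_{\prec}$ is an $\mathcal L_{\pa}^X$-formula (its only set variable is the free $X$, while $\varepsilon_0$ and $\prec$ are defined in $\mathcal L_{\pa}$), the conservativity of $\aca_0$ over $\pa[X]$ from Proposition~\ref{prop:aca-conservative} yields $\pa[X]\vdash\ti_{\prec}$, contradicting Theorem~\ref{thm:unprov-eps_0}. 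Since Kruskal's theorem for arbitrary finite trees implies the binary case, the same conclusion covers it.

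For the central implication I would fix, inside $\aca_0$, the order-reflecting function $f\colon\varepsilon_0\to\mathcal B$ provided by Proposition~\ref{prop:quasi-embedding}, and argue by contradiction against $\ti_{\prec}$. So assume $\prog_{\prec}$ holds but $\neg\forall\alpha\in\varepsilon_0.X\alpha$. By arithmetical comprehension (with the free set variable $X$ as a parameter) I form the nonempty set $B=\{\alpha\in\varepsilon_0\mid\neg X\alpha\}$. An instance of $\prog_{\prec}$ at any $\alpha\in B$ gives, by contraposition, some $\beta\prec\alpha$ with $\beta\in B$; hence $B$ has no $\prec$-minimal element. Choosing at each step the $\mathbb N$-least such code defines a function $g$ that is arithmetical in the parameter $B$, and starting from a fixed element of $B$ I would form the $\prec$-descending sequence $\alpha_0\succ\alpha_1\succ\cdots$ with all $\alpha_n\in B$. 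The graph of this sequence is obtained by arithmetical comprehension, characterising the pair $\langle n,m\rangle$ through the existence of a finite approximating sequence that reaches $m$ at stage $n$.

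It then remains to feed this sequence into Kruskal's theorem. Setting $t_n:=f(\alpha_n)\in\mathcal B$, and collecting the pairs $\langle n,f(\alpha_n)\rangle$ into a set by a further application of comprehension, gives an infinite sequence in $\mathcal B$. For $i<j$, transitivity of $\prec$ (Exercise~\ref{ex:prec-linear}) gives $\alpha_j\prec\alpha_i$, so $\alpha_i\preceq\alpha_j$ fails; since $f$ reflects the order, $f(\alpha_i)\leq_{\mathcal B}f(\alpha_j)$ would force $\alpha_i\preceq\alpha_j$, and therefore $t_i\not\leq_{\mathcal B}t_j$. Thus $(t_n)$ is a bad sequence, directly contradicting Kruskal's theorem. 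This contradiction establishes $\ti_{\prec}$ inside $\aca_0$, and the argument of the first paragraph then closes the proof.

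I expect the main obstacle to be the formalisation in $\aca_0$ rather than the combinatorial idea, which is merely the transfer of well-foundedness along a quasi-embedding. The delicate points are that the recursively defined descending sequence $(\alpha_n)$ must be shown to exist as a set---this is exactly where arithmetical comprehension applied to the statement about finite approximating sequences is needed---and that invoking the $\mathcal L_2$-formalisation of Kruskal's theorem requires the associated sequence $\langle n,f(\alpha_n)\rangle$ to be available as an explicit set. Both are routine uses of arithmetical comprehension, but they are precisely the steps one must not skip to keep the argument honest within $\aca_0$.
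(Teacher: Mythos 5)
Your proposal is correct and follows essentially the same route as the paper's own proof: reduce to the unprovability of $\ti_{\prec}$ via Proposition~\ref{prop:aca-conservative} and Theorem~\ref{thm:unprov-eps_0}, then show inside $\aca_0$ that a failure of $\ti_{\prec}$ yields a $\prec$-descending sequence (built without choice by taking $\mathbb N$-least witnesses and formalised via finite approximating sequences and arithmetical comprehension), which the quasi-embedding $f$ of Proposition~\ref{prop:quasi-embedding} turns into a bad sequence in $\mathcal B$, contradicting Kruskal's theorem. The paper's proof is the same argument, merely phrased as finding $i<j$ with $f(\alpha_i)\leq_{\mathcal B}f(\alpha_j)$ rather than verifying badness of the whole sequence.
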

\begin{proof}
By combining Theorem~\ref{thm:unprov-eps_0} with Proposition~\ref{prop:aca-conservative}, we learn that $\aca_0$ does not prove $\ti_{\prec}$, which expresses induction along the well order $(\varepsilon_0,\prec)$. It only remains to show that $\ti_{\prec}$ follows from Kruskal's theorem for binary trees, provably in~$\aca_0$. Aiming at a contradiction with Kruskal's theorem, assume that $\ti_{\prec}$ fails. This means that the premise $\prog_\prec$ of $\ti_{\prec}$ is true while the conclusion $\forall\alpha\in\varepsilon_0.X\alpha$ is false, for some set~$X$. In other words, we have
\begin{equation*}
\exists\alpha\in\varepsilon_0.\neg X\alpha\quad\text{and}\quad\forall\alpha\in\varepsilon_0(\neg X\alpha\to\exists\beta\prec\alpha.\neg X\beta).
\end{equation*}
We can thus construct a sequence $\alpha_0,\alpha_1,\ldots\subseteq\varepsilon_0$ with $\neg X\alpha_i$ and $\alpha_{i+1}\prec\alpha_i$ for every index~$i\in\mathbb N$. Note that this does not involve choice, as the elements of~$\varepsilon_0$ are coded by natural numbers: in the recursion step, we can thus pick the element~$\alpha_{i+1}$ that is minimal with respect to the usual order~$\leq_{\mathbb N}$ on~$\mathbb N$. In $\aca_0$, the construction can be formalized as follows: Define an $X$-sequence as a finite sequence $\langle\alpha_0,\ldots,\alpha_{n-1}\rangle$ such that for all $i<n$
\begin{enumerate}[label=(\roman*)]
\item we have $\alpha_i\in\varepsilon_0$ and $\neg X\alpha_i$ as well as $\alpha_i\prec\alpha_{i-1}$ or $i=0$,
\item if $\alpha_i'\in\varepsilon_0$ and $\neg X\alpha_i'$ as well as $\alpha_i'\prec\alpha_{i-1}$ or $i=0$, then we have $\alpha_i\leq_{\mathbb N}\alpha_i'$.
\end{enumerate}
Due to the assumption~$\neg\ti_{\prec}$ and the minimality condition~(ii), a straightforward induction in~$\aca_0$ shows that there is a unique $X$-sequence $\langle\alpha_0,\ldots,\alpha_{n-1}\rangle$ of each length~$n\in\mathbb N$ (assuming that $X$-sequences are coded by natural numbers). Given such a sequence, we also note that each initial segment $\langle\alpha_0,\ldots,\alpha_{m-1}\rangle$ is the unique $X$-sequence of length $m<n$. Now an infinite sequence $\alpha_0,\alpha_1,\ldots\subseteq\varepsilon_0$ with  $\neg X\alpha_i$ and $\alpha_{i+1}\prec\alpha_i$ for all~$i\in\mathbb N$ is represented by the set
\begin{equation*}
\{\langle i,\alpha_i\rangle\,|\,\text{``$\alpha_i$ is the $(i+1)$-th entry of some (every) $X$-sequence of lenght~$>i$"}\}.
\end{equation*}
Note that we have put $i+1$ rather than $i$ so that we can call $\alpha_0$ the first rather that  the $0$-th entry. The given set can be formed in $\aca_0$, because the condition is arithmetical (in fact it defines a recursive relation relative to~$X$). Now the function $f:\varepsilon_0\to\mathcal B$ from Proposition~\ref{prop:quasi-embedding} transforms our sequence $\alpha_0,\alpha_1,\ldots\subseteq\varepsilon_0$ into a sequence $f(\alpha_0),f(\alpha_1),\ldots\subseteq\mathcal B$, which can also be formed in $\aca_0$ (use that $\langle\alpha,t\rangle\in f$ is arithmetical with $f$ as parameter). By Kruskal's theorem, we find indices $i<j$ with $f(\alpha_i)\leq_{\mathcal B}f(\alpha_j)$. The latter entails $\alpha_i\preceq\alpha_j$ by Proposition~\ref{prop:quasi-embedding}. On the other hand, the construction of our sequence yields $\alpha_j\prec\alpha_{j-1}\prec\ldots\prec\alpha_i$, so that we have reached the contradiction that we were aiming at.
\end{proof}

Recall, again, that there are two ways to express that $(X,<_X)$ is well founded:
\begin{enumerate}
\item there is no infinite sequence $\alpha_0,\alpha_1,\ldots\subseteq X$ with $\alpha_{i+1}<_X\alpha_i$ for all~$i\in\mathbb N$,
\item any non-empty $Z\subseteq X$ has a $<_X$-minimal element.
\end{enumerate}
Note that~(2) is logically equivalent to transfinite induction along~$<_X$. In a set-theoretic context, the fact that (1) implies~(2) depends on the axiom of dependent choice. We have seen that this is not the case in second order arithmetic, where we can use the usual order on $\mathbb N$ to pick elements without appealing to a choice principle. Indeed, it is implicit in the previous proof that the implication from (1)~to~(2) is provable in~$\aca_0$. For the reader with a background in reverse mathematics (see~\cite{simpson09}), we mention that the same argument establishes this implication in the weaker theory $\mathsf{RCA_0}$ (``recursive comprehension axiom"). The implication from~(2) to~(1) is immediate in set theory: if $\alpha_0,\alpha_1,\ldots$ falsifies~(1), then (2) fails for $Z=\{\alpha_n\,|\,n\in\mathbb N\}$. This last set can be formed in $\aca_0$ but not in~$\mathsf{RCA}_0$. Nevertheless, the implication from~(2) to~(1) is provable in~$\mathsf{RCA}_0$ as well. This is related to a result of computability theory: any infinite recursively enumerable set has an infinite recursive subset (see e.\,g.~\cite[Lemma~2.3.12]{freund-thesis} for a detailed proof in~$\mathsf{RCA}_0$).

We have mentioned that the following result can be derived via Remark~\ref{rmk:eps_0-set-theoretic} or Exercise~\ref{ex:Gentzen-jump}. Here, we argue that it follows from Kruskal's theorem.

\begin{corollary}
The structure $(\varepsilon_0,\prec)$ is a well order.
\end{corollary}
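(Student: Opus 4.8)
The corollary asserts that $(\varepsilon_0,\prec)$ is a well order. We already know from Exercise~\ref{ex:prec-linear} that $\prec$ is a linear order on $\varepsilon_0$, so the only remaining task is to establish well-foundedness, i.e.\ that there is no infinite descending sequence $\alpha_0\succ\alpha_1\succ\cdots$ in $\varepsilon_0$.

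The plan is to argue by contraposition, reducing the existence of an infinite descending $\prec$-sequence to a bad sequence in $(\mathcal B,\leq_{\mathcal B})$, which Kruskal's theorem (Theorem~\ref{thm:kruskal-binary}) forbids. First I would suppose, for contradiction, that $\prec$ is not well founded, so that there is an infinite sequence $\alpha_0,\alpha_1,\ldots\subseteq\varepsilon_0$ with $\alpha_{i+1}\prec\alpha_i$ for every $i\in\mathbb N$. Applying the order-reflecting function $f:\varepsilon_0\to\mathcal B$ from Proposition~\ref{prop:quasi-embedding}, I obtain a sequence $f(\alpha_0),f(\alpha_1),\ldots\subseteq\mathcal B$. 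The key point is that $f$ reflects the order: $f(\alpha_i)\leq_{\mathcal B}f(\alpha_j)$ would entail $\alpha_i\preceq\alpha_j$. But for $i<j$ our descending sequence gives $\alpha_j\prec\alpha_{j-1}\prec\cdots\prec\alpha_i$, hence $\alpha_j\prec\alpha_i$, so $\alpha_i\preceq\alpha_j$ fails. Therefore $f(\alpha_i)\leq_{\mathcal B}f(\alpha_j)$ holds for no pair $i<j$; that is, $f(\alpha_0),f(\alpha_1),\ldots$ is an infinite \emph{bad} sequence in $(\mathcal B,\leq_{\mathcal B})$.

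This directly contradicts Theorem~\ref{thm:kruskal-binary}, which says precisely that every infinite sequence in $\mathcal B$ contains a pair of indices $i<j$ with $f(\alpha_i)\leq_{\mathcal B}f(\alpha_j)$. Hence no infinite descending $\prec$-sequence can exist, and $(\varepsilon_0,\prec)$ is well founded; combined with linearity from Exercise~\ref{ex:prec-linear} this makes it a well order.

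I expect no serious obstacle here: the argument is essentially a one-line composition of already-established facts, with Proposition~\ref{prop:quasi-embedding} supplying the quasi-embedding and Theorem~\ref{thm:kruskal-binary} supplying the finiteness of bad sequences. The only point that deserves a moment's care is the direction of the order-reflection, namely checking that a descending $\prec$-sequence maps to a \emph{bad} (rather than good) sequence under $f$; this is exactly where order-reflection (as opposed to order-preservation) is used, and it is worth stating explicitly that $f(\alpha_i)\leq_{\mathcal B}f(\alpha_j)$ for some $i<j$ would force $\alpha_i\preceq\alpha_j$, contradicting $\alpha_j\prec\alpha_i$.
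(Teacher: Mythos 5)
Your proof is correct and takes essentially the same route as the paper: the paper's own proof likewise combines linearity from Exercise~\ref{ex:prec-linear} with the observation that Kruskal's theorem, via the quasi-embedding of Proposition~\ref{prop:quasi-embedding}, rules out infinite descending sequences in $(\varepsilon_0,\prec)$ --- exactly the argument embedded in the proof of Theorem~\ref{thm:Kruskal-indep-ACA}. The paper merely adds a slightly more formal rendering, noting that $\ti_\prec$ follows from Kruskal's theorem provably in $\aca_0$ and hence holds in the standard model $(\mathbb N,\mathcal P(\mathbb N))$, which delivers the minimal-element (transfinite induction) formulation of well-foundedness directly rather than the no-descending-sequence formulation you use.
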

\begin{proof}
For any infinite sequence $\alpha_0,\alpha_1,\ldots$ in $\varepsilon_0$, we can use Kruskal's theorem to find $i<j$ with $\alpha_i\preceq\alpha_j$, as in the previous proof. Hence the given sequence in~$\varepsilon_0$ cannot be strictly decreasing.
\end{proof}

In Exercise~\ref{ex:Gentzen-jump} we have seen that the result of Theorem~\ref{thm:unprov-eps_0} is sharp. In the following remark, we argue that Theorem~\ref{thm:Kruskal-indep-ACA} is sharp as well. Here it is crucial that we consider binary trees only: Kruskal's theorem for arbitrary finite trees reaches far beyond $\aca_0$ and the well order~$\varepsilon_0$ (see~\cite{rathjen-weiermann-kruskal}).

\begin{remark}\label{rmk:reification}
Recall the notion of bad sequence that was introduced in the paragraph before Exercise~\ref{ex:Kruskal}. Let $\bad(\mathcal B)$ be the set of non-empty and (necessarily) finite bad sequences in the partial order~$(\mathcal B,\leq_{\mathcal B})$. By a \emph{reification} we shall mean a function $r:\bad(\mathcal B)\to\varepsilon_0$ such that we have
\begin{equation*}
r(\langle t_0,\ldots,t_n,t_{n+1}\rangle)\prec r(\langle t_0,\ldots,t_n\rangle)\quad\text{when}\quad\langle t_0,\ldots,t_{n+1}\rangle\in\bad(\mathcal B).
\end{equation*}
For the case of~$\mathcal B$, Dick de Jongh has shown that such a reification into $\varepsilon_0$ exists (according to Diana Schmidt~\cite{schmidt75}, cf.~also~\cite{schmidt-habil-new}). Indeed, this fact can be established in~$\aca_0$ (and even in~$\mathsf{RCA}_0$, see~\cite[Section~6]{freund-kruskal-scheme} for a detailed presentation). We can deduce that the following are equivalent over $\aca_0$ (in fact over~$\mathsf{RCA}_0$):
\begin{enumerate}
\item Kruskal's theorem for binary trees (i.\,e., Theorem~\ref{thm:kruskal-binary}),
\item the well foundedness of $(\varepsilon_0,\prec)$.
\end{enumerate}
The implication from~(1) to~(2) has been established in the proof of Theorem~\ref{thm:Kruskal-indep-ACA}. We now sketch a proof of the converse direction: Aiming at a contradiction, assume that~(1) fails. We then have an infinite bad sequence $t_0,t_1,\ldots\subseteq\mathcal B$. Using our reification $r:\mathcal B\to\varepsilon_0$, we get an infinitely descending sequence $r(\langle t_0\rangle)\succ r(\langle t_0,t_1\rangle)\succ\ldots$ in~$\varepsilon_0$, which contradicts~(2). In a sense, the fact that we have an equivalence means that the reification into~$\varepsilon_0$ yields the most elementary proof of Kruskal's theorem. In contrast, the proof via the minimal bad sequence argument (cf.~Exercise~\ref{ex:Kruskal}) is much shorter but much less `elementary': as shown by Alberto Marcone~\cite{marcone-bad-sequence}, a general version of the minimal bad sequence argument exhausts the full strength of a theory known as $\Pi^1_1\textsf{-CA}_0$, which is much stronger than $\aca_0$ (see once again~\cite{simpson09} for general background). Even Kruskal's theorem for arbitrary finite trees is considerably weaker than $\Pi^1_1\textsf{CA}_0$ (see \cite{rathjen-weiermann-kruskal} but compare~\cite{FRW-Kruskal}). In view of Theorem~\ref{thm:Kruskal-indep-ACA}, neither the minimal bad sequence argument nor the proof via~$\varepsilon_0$ can be implemented in~$\aca_0$.
\end{remark}

\section{Conclusion: other applications of ordinal analysis}\label{sect:conclusion}

In these lecture notes, we have focused on one particular application of ordinal analysis: independence for the binary Kruskal theorem. Let us point out that there are considerably stronger results of the same type. In particular, the graph minor theorem is independent of a second order theory known as $\Pi^1_1\textsf{-CA}_0$, as shown by Harvey Friedman, Neil Robertson and Paul Seymour~\cite{friedman-robertson-seymour}. In the present section, we briefly mention two other types of applications. These would have deserved our full attention but have been omitted in order to keep the lecture concise.

The first application concerns \emph{provably total recursive function}. Consider a theory~$\textsf{T}$ in a language that interprets first order arithmetic. Assume we have
\begin{equation*}
\textsf T\vdash\forall x\in\mathbb N\exists y\in\mathbb N.\theta(x,y),
\end{equation*}
where $\{(m,n)\,|\,\mathbb N\vDash\theta(m,n)\}$ is recursively enumerable (officially, we demand that $\theta$ is a $\Sigma^0_1$-formula). Given that $\textsf T$ is sound, the formula it derives must be true. The latter entails that there is a recursive function $f:\mathbb N\to\mathbb N$ such that we have
\begin{equation*}
\mathbb N\vDash\forall x\in\mathbb N\exists y\leq f(x).\theta(x,y).
\end{equation*}
Georg Kreisel has famously asked (cf.~\cite{kreisel-significance}): ``What more do we know if we have proved a theorem by restricted means than if we merely know that it is true?" In the present situation, we can give the following answer: Typically, an ordinal analysis of~$\textsf T$ will yield a bound on a function~$f$ as above. In other words, we obtain quantitative information rather than a mere existence result. Let us point out that such information can also be extracted by other methods, in particular by functional interpretation, which is fundamental for the proof mining programme of Ulrich Kohlenbach~\cite{kohlenbach-proof-mining}. If $\theta$ defines a function (provably in~$\textsf T$), then we can even achieve $y=f(x)$ rather than $y\leq f(x)$ as above. In this case, $f$ is called a provably total recursive function of~$\textsf T$. More information can be found, e.\,g., in~\cite{wainer-fairtlough-98}.

As a second application, we mention relative consistency and conservativity. In Proposition~\ref{prop:aca-conservative} we have seen that the second order theory~$\aca_0$ is conservative over first order Peano arithmetic. We have established this proposition by a model-theoretic argument. In other important cases, this type of argument is not available, but ordinal analysis can be used to prove conservativity. As an example, we mention a result due to Michael Rathjen~\cite{rathjen_martin-loef} and Anton Setzer~\cite{setzer_martin-loef}: a strong $\mathcal L_2$-theory known as $\Delta^1_2\textsf{-CA+BI}$ is $\Pi^0_2$-conservative over Per Martin-L\"of's type theory (in the 1984 version). An important point here is that the $\mathcal L_2$-theory is based on classical logic while the type theory is constructive. In a sense, the conservativity result tells us that we may freely use classical logic even if we think that constructive reasoning is preferable. It can be seen as a partial realization of Hilbert's programme, which had the aim to justify abstract reasoning by showing that it can be eliminated in principle (see again~\cite{zach16,rathjen-sieg-stanford}). Indeed, Hilbert's programme and the question of consistency form the historical root of ordinal analysis. Out of this root, ordinal analysis has grown into a powerful method with a large variety of applications. For a more comprehensive account of the various aspects of ordinal analysis, the reader may wish to consult the survey~\cite{rathjen-realm} by Michael Rathjen, as well as the other sources that are mentioned in the introduction to these lecture notes.

\bibliographystyle{amsplain}
\bibliography{Lecture_Unprovability}

\end{document}